\theoremstyle{definition}
\newtheorem{defn}{Definition}[section]
\theoremstyle{plain}
\newtheorem{thm}[defn]{Theorem}
\newtheorem{lem}[defn]{Lemma}
\newtheorem{prop}[defn]{Proposition}
\newtheorem{cor}[defn]{Corollary}
\def\C{\ensuremath{\mathbb{C}}}
\def\D{\ensuremath{\mathbb{D}}}
\def\F{\ensuremath{\mathbb{F}}}
\def\P{\ensuremath{\mathbb{P}}}
\def\R{\ensuremath{\mathbb{R}}}
\def\Z{\ensuremath{\mathbb{Z}}}
\def\FF{\ensuremath{\mathcal F}}
\def\HH{\ensuremath{\mathcal H}}
\def\II{\ensuremath{\mathcal I}}
\def\OO{\ensuremath{\mathcal O}}
\def\TT{\ensuremath{\mathcal T}}
\def\ch{\mathop{\mathrm{ch}}\nolimits}
\def\Coh{\mathop{\mathrm{Coh}}\nolimits}
\def\Db{\mathop{\mathrm{D}^{\mathrm{b}}}\nolimits}
\def\deg{\mathop{\mathrm{deg}}\nolimits}
\def\dim{\mathop{\mathrm{dim}}\nolimits}
\def\Ext{\mathop{\mathrm{Ext}}\nolimits}
\def\hom{\mathop{\mathrm{hom}}\nolimits}
\def\Hom{\mathop{\mathrm{Hom}}\nolimits}
\def\RlHom{\mathop{\mathbf{R}\mathcal Hom}\nolimits}
\def\Pic{\mathop{\mathrm{Pic}}\nolimits}
\def\td{\mathop{\mathrm{td}}\nolimits}
\def\into{\ensuremath{\hookrightarrow}}
\def\onto{\ensuremath{\twoheadrightarrow}}
\begin{document}

\title{Discriminants of stable rank two sheaves on some general type surfaces}

\author{Benjamin Schmidt}
\address{The University of Texas at Austin, Department of Mathematics, 2515 Speedway, RLM 8.100, Austin, TX 78712, USA}
\email{schmidt@math.utexas.edu}
\urladdr{https://sites.google.com/site/benjaminschmidtmath/}

\author{Benjamin Sung}
\address{Northeastern University, Department of Mathematics, 360 Huntington Avenue, Boston, MA 02115-5000, USA}
\email{b.sung@northeastern.edu}

\keywords{Stable sheaves, Stability conditions, Derived categories}

\subjclass[2010]{14J60 (Primary); 14D20, 14F05 (Secondary)}

\begin{abstract}
We prove sharp bounds on the discriminants of stable rank two sheaves on surfaces in three-dimensional projective space. The key technical ingredient is to study them as torsion sheaves in projective space via tilt stability in the derived category. We then proceed to describe the surface itself as a moduli space of rank two vector bundles on it. Lastly, we give a proof of the Bogomolov inequality for semistable rank two sheaves on integral surfaces in three-dimensional projective space in all characteristics.
\end{abstract}

\maketitle


\section{Introduction}

A fundamental problem in the theory of stable sheaves on surfaces is to understand their possible Chern characters. A first major step was the \emph{Bogomolov inequality} (see \cite{Bog78:inequality}). If the ground field has characteristic zero, it says that any semistable sheaf $E$ has positive discriminant $\Delta(E) \geq 0$ (see Theorem \ref{thm:bogomolov_inequality} for a definition of $\Delta(E)$). 

A complete classification of Chern characters of stable sheaves has been obtained in various special cases. However, when it comes to general type surfaces, almost nothing is known beyond the Bogomolov inequality. We prove the following statement about stable rank two sheaves on general type surfaces in $\P^3$.

\newtheorem*{thm:rank_two_bounds_surface}{Theorem \ref{thm:rank_two_bounds_surface}}
\begin{thm:rank_two_bounds_surface}
Let $S \subset \P^3$ be a very general smooth projective surface of degree $d \geq 5$ over an algebraically closed field $\F$, and let $H$ be the hyperplane section on $S$. Further assume $E \in \Coh(S)$ is a slope-stable sheaf with $\ch_0(E) = 2$.
\begin{enumerate}
\item If $\ch_1(E) = -H$, then
\[
\Delta(E) \geq 3d^2 - 4d.
\]
Equality implies that $h^0(E(H)) \geq 3$. Moreover, equality can be obtained for non-trivial extensions
\[
0 \to \OO_S(-H) \to E \to \II_Z \to 0,
\]
where $Z$ is a zero-dimensional subscheme of length $d - 1$ contained in a line.
\item If $\ch_1(E) = 0$, then 
\[
\Delta(E) \geq 4d^2.
\]
Equality can be obtained for non-trivial extensions
\[
0 \to \OO_S(-H) \to E \to \II_Z(H) \to 0,
\]
where $Z$ is a zero-dimensional subscheme of length $2d$ contained in two non-intersecting lines such that $d$ points are contained in each line.
\end{enumerate}
\end{thm:rank_two_bounds_surface}

The case $d = 5$ and $\ch_1(E) = -H$ was previously shown by Mestrano and Simpson (see \cite{MS11:rank_two_quinticI, MS18:rank_two_quinticII}). We use the fact that $S$ is very general only in two instances. Firstly, it is necessary to have $\Pic(X) = \Z \cdot H$ for the construction of semistable objects in which the discriminant reaches the bound. Secondly, when proving the bound, we use it in Lemma \ref{lem:torsion_destabilized_by_rank_one} to avoid the case in which $X$ contains a line. It would be interesting to see whether the existence of such a line can be used to construct a counterexample to the conclusion of the theorem.

\newtheorem*{cor:moduli_on_surface}{Corollary \ref{cor:moduli_on_surface}}
\begin{cor:moduli_on_surface}
Let $S \subset \P^3$ be a very general surface of degree $d \geq 5$ over an algebraically closed field of characteristic zero, and let $H$ be the hyperplane section on $S$. The moduli space of semistable rank two sheaves on $S$ with $\ch_1(E) = -H$ and $\Delta(E) = 3d^2 - 4d$ is given by $S$.
\end{cor:moduli_on_surface}

The Bogomolov inequality does not hold for arbitrary surfaces in positive characteristic. The major positive results in this case are due to \cite{Lan04:positive_char}. We prove that the inequality holds for semistable rank two sheaves on surfaces in $\P^3$ regardless of the characteristic of the field and for arbitrary integral surfaces. 

\newtheorem*{cor:bogomolov}{Corollary \ref{cor:bogomolov}}
\begin{cor:bogomolov}
Let $S \subset \P^3$ be an integral hypersurface, and let $H$ be the hyperplane section. If $E$ is a rank two slope-semistable torsion free sheaf on $S$, then $\Delta(E) \geq 0$.
\end{cor:bogomolov}

A quick remark is in order. If $X$ is singular, it is not entirely clear how to define $\Delta(E)$. For the purposes of this statement, we simply define the Chern characters of $E$ on $S$ via the Chern characters of $E$ in $\P^3$ by formally applying the Grothendieck-Riemann-Roch formula.

\subsection{Ingredients of the proof}

The proof of these statements is based on stability in the derived category of $\P^3$. More precisely, we are using the notion of \emph{tilt stability}. It can be thought of as a generalization of the classical notion of slope stability for sheaves on surfaces. It roughly amounts to replacing the category of coherent sheaves with a different abelian category $\Coh^{\beta}(\P^3)$ embedded in the bounded derived category $\Db(\P^3)$ and the classical slope with a new slope function $\nu_{\alpha,\beta}$. Everything depends on two real parameters $\alpha, \beta \in \R$, $\alpha > 0$. 

Let $E$ be a rank two stable sheaf on a surface $i: S \into \P^3$. The starting point is that for $\alpha \gg 0$, the sheaf $i_*E$ is $\nu_{\alpha,\beta}$-stable. If a wall for $i_* E$ is induced by a short exact sequence $0 \to F \to i_* E \to G \to 0$, then bounds on $\ch_3(F)$ and $\ch_3(G)$ lead to a bound on $\ch_{2, S}(E)$.

There is one issue with this approach. There are sheaves in $\P^3$ with the same first and second Chern character as $i_* E$ that are supported on a surface of degree $2d$. The third Chern characters of these objects satistfy much weaker bounds. The point of Proposition \ref{prop:rank_two_destabilizes_rank_two} and Corollary \ref{cor:rank_two_destabilizes_rank_two} is to impose restrictions on walls that take this issue into account.

\subsection{Further motivation}
One other motivation for understanding sharper bounds on discriminants is in the construction of Bridgeland stability conditions for higher dimensional varieties. The existence of such stability conditions is not known in general, and in fact they have only recently been constructed for quintic threefolds in \cite{Li18:bg3_quintic}. The main technique employed was to reduce a stronger Bogomolov-Gieseker type inequality for the second Chern character to an inequality on a complete intersection of a quintic and a quadric $S_{2,5} = X_{5} \cap Q_{1} \subset \mathbb{P}^{4}$ via restriction arguments. This is subsequently established by a stronger Clifford type bound on an embedded curve $C_{2,2,5} = S_{2,5} \cap Q_{2} \subset \mathbb{P}^{4}$ via inequalities on pushforwards to torsion sheaves on $Q_{1} \cap Q_{2} \subset \mathbb{P}^{4}$. This result is then used to prove the conjectured generalized Bogomolov inequality from \cite{BMT14:stability_threefolds} in many cases. We remark that our approach parallels the aforementioned technique in a different situation with a hyperplane section of the quintic threefold and using inequalities on $\mathbb{P}^{3}$. 


\subsection{Structure of the article}

In Section \ref{sec:prelim} we recall basic notions of stability. The proof of the main theorem requires showing Chern character bounds for some other semistable objects in $\P^3$. These bounds are obtained in Section \ref{sec:some_bounds}. In Section \ref{sec:walls} we prove results that reduce the number of possible walls for rank two sheaves supported on surfaces in $\P^3$. These statements also lead to a proof of Corollary \ref{cor:bogomolov}. Finally, in Section \ref{sec:main} we prove Theorem \ref{thm:rank_two_bounds_surface} and Corollary \ref{cor:moduli_on_surface}.

\subsection*{Acknowledgments}

We would like to thank Tom Bridgeland, Izzet Coskun, Sean Keel, and Emanuele Macr\`i for very useful discussions. We also thank the referee for useful comments. B. Schmidt is supported by an AMS-Simons Travel Grant. B. Sung has been supported by NSF RTG Grant DMS-1645877 and the NSF Graduate Research Fellowship under grant DGE-1451070.

\subsection*{Notation}

\begin{center}
   \begin{tabular}{ r l }
     $\F$ & an algebraically closed field \\
     $X$ & smooth projective variety over $\F$ \\
     $n$ & $\dim X$ \\
     $H$ & fixed ample divisor on $X$ \\
     $\Db(X)$ & bounded derived category of coherent sheaves on $X$ \\
     $\HH^{i}(E)$ & the $i$-th cohomology group of a complex $E \in \Db(X)$ \\
     $H^i(E)$ & the $i$-th sheaf cohomology group of a complex $E \in \Db(X)$ \\
     $\D(\cdot)$ & the shifted derived dual $\RlHom(\cdot, \OO_X)[1]$ \\
     $\ch_X(E) = \ch(E)$ & Chern character of an object $E \in \Db(X)$  \\
     $\ch_{\leq l, X}(E) = \ch_{\leq l}(E)$ & $(\ch_0(E), \ldots, \ch_l(E))$ \\
     $H \cdot \ch_X(E) = H \cdot \ch(E)$ & $\left(H^n \cdot \ch_0(E), H^{n-1} \cdot \ch_1(E), \ldots, \ch_n(E)\right)$ \\
     $\td_{\P^3} = \td(T_{\P^3})$ & $\left(1,2,\frac{11}{6},1\right)$ \\
     $\td_S = \td(T_S)$ & $\left(1, \left(2 - \frac{d}{2}\right)H, \frac{d^3}{6} - d^2 + \frac{11d}{6} \right)$ for a surface $S \subset \P^3$ of degree $d$.
   \end{tabular}
\end{center}

\section{Background in stability}
\label{sec:prelim}

We will explain various notions of stability in this section. Let $X$ be a smooth projective variety over an algebraically closed field $\F$, $H$ be the class of an ample divisor on $X$, and $n = \dim(X)$.

\subsection{Classical Notions}

If $E \in \Coh(X)$ is an arbitrary coherent sheaf, then its \emph{slope} is
\[
\mu(E) := \frac{H^{n-1} \cdot \ch_1(E)}{H^n \cdot \ch_0(E)}.
\]
If $\ch_0(E) = 0$, then we define $\mu(E) := +\infty$. We say that $E$ is \emph{slope-(semi)stable} if any non-zero subsheaf $A \subset E$ satisfies $\mu(A) < (\leq) \mu(E/A)$. This notion has good computational properties, but often we require a more flexible notion of stability. The idea is to introduce further tiebreaker functions in the case of equal slope. This is most easily stated in terms of polynomials. For $p, q \in \R[m]$ we define an order as follows.
\begin{enumerate}
\item If $\deg(p) < \deg(q)$, then $p > q$.
\item Let $d = \deg(p) = \deg(q)$, and let $a$, $b$ be the leading coefficients in $p$, $q$ respectively. Then $p < (\leq) q$ if $\tfrac{p(m)}{a} < (\leq) \tfrac{q(m)}{b}$ for all $m \gg 0$. 
\end{enumerate}

\begin{defn}
\begin{enumerate}
\item Let $E$ be a coherent sheaf and let $k$ be an integer with $1 \leq k \leq n$. Let $P_n(E, m) := \chi(E(m))$ be the Hilbert polynomial of $E$, and define $\alpha_i(E)$ via
\[
P_n(E, m) = \sum_{i = 0}^n \alpha_i(E) m^i.
\]
We define
\[
P_k(E, m) := \sum_{i = n-k}^n \alpha_i(E) m^i.
\]
\item A coherent sheaf $E$ is called \emph{$k$-GS-(semi)stable} if for any non-zero subsheaf $A \into E$ the inequality $P_k(A, m) < (\leq) P_k(E/A, m)$ holds.
\item A coherent sheaf $E$ is called \emph{GS-(semi)stable} if it is $n$-GS-(semi)stable.
\end{enumerate}
\end{defn}

A few remarks are in order. A Riemann-Roch calculation shows that the notion of $1$-GS-stability is the same as slope stability. Gieseker first introduced GS-stability for torsion-free sheaves. Later, Simpson generalized it to torsion sheaves. The notion of $k$-GS-stability is lesser-known, but turns out to be relevant for $k = 2$ in the context of Bridgeland stability on threefolds. We will discuss this further below. A very simple argument yields the following relations between these notions.

\centerline{
\xymatrix{
\text{slope-stable} \ar@{=>}[r] & \text{$2$-GS-stable} \ar@{=>}[r] & \ldots \ar@{=>}[r] & \text{$(n-1)$-GS-stable}  \ar@{=>}[r] & \text{GS-stable} \ar@{=>}[d] \\
\text{slope-semistable} & \text{$2$-GS-semistable} \ar@{=>}[l] & \ldots \ar@{=>}[l] & \text{$(n-1)$-GS-semistable} \ar@{=>}[l] &\text{GS-semistable} \ar@{=>}[l]
}}

The Chern characters of semistable objects satisfy non-trivial inequalities. The most famous one is the Bogomolov inequality (\cite{Bog78:inequality}).

\begin{thm}[Bogomolov inequality]
\label{thm:bogomolov_inequality}
Assume that $\F$ has characteristic $0$. If $E$ is a slope-semistable sheaf $E \in \Coh(X)$, then
\[
\Delta(E) := (H^{n-1} \cdot \ch_1(E))^2 - 2 (H^n \cdot \ch_0(E)) (H^{n-2} \cdot \ch_2(E)) \geq 0.
\]
\end{thm}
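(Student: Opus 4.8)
The plan is to combine two standard reductions with the classical tensor-power argument of Bogomolov. \textbf{First, reduce to surfaces.} If $\ch_0(E)=0$ then $E$ is torsion and $\Delta(E)=(H^{n-1}\cdot\ch_1(E))^2\ge 0$, so I may assume $\ch_0(E)>0$; then $E$ is automatically torsion free, since a nonzero torsion subsheaf would have slope $+\infty$ and destabilize $E$. Replacing $H$ by a multiple multiplies $\Delta(E)$ by a positive constant and does not affect slope-semistability, so I may take $H$ very ample and then, applying the Mehta--Ramanathan restriction theorem (valid in characteristic zero) $n-2$ times, pass to a general smooth complete intersection surface $S=D_1\cap\dots\cap D_{n-2}$ with $D_i\in|mH|$, $m\gg 0$, on which $E|_S$ is torsion free and slope-semistable with respect to $H|_S$. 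By the projection formula, $H^{n-1}\cdot\ch_1(E)$, $H^n\cdot\ch_0(E)$ and $H^{n-2}\cdot\ch_2(E)$ equal the corresponding intersection numbers on $S$ up to a common positive factor, so $\Delta(E|_S)$ computed on $S$ is a positive multiple of $\Delta(E)$, and it suffices to treat $n=2$.

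\textbf{Second, reduce to the endomorphism sheaf.} On the surface $S$, replacing $E$ by its reflexive hull $E^{\vee\vee}$ --- a vector bundle --- leaves $\ch_0,\ch_1$ unchanged, can only increase $\ch_2$, hence can only decrease $\Delta$, and preserves slope-semistability; so I may assume $E$ is locally free of rank $r$. Set $F:=\lEnd(E)=E^\vee\otimes E$, so that $\rk F=r^2$, $c_1(F)=0$, and $\ch_2(F)=2r\,\ch_2(E)-c_1(E)^2$. By the theorem that a tensor product of slope-semistable sheaves is slope-semistable \emph{in characteristic zero} --- this is precisely where the hypothesis is used, and the reason the inequality fails in positive characteristic --- $F$ is slope-semistable of slope $0$. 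Since the Hodge index theorem gives $(H\cdot c_1(E))^2\ge H^2\cdot c_1(E)^2$, the bound $\ch_2(F)\le 0$ would yield $c_1(E)^2-2r\,\ch_2(E)\ge 0$ and hence
\[
\Delta(E)=(H\cdot c_1(E))^2-2rH^2\,\ch_2(E)\ \ge\ H^2\bigl(c_1(E)^2-2r\,\ch_2(E)\bigr)\ \ge\ 0 .
\]
Thus everything reduces to: a slope-semistable torsion free sheaf $F$ on a smooth polarized surface with $c_1(F)=0$ satisfies $\ch_2(F)\le 0$.

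\textbf{The surface case.} Here I would run the tensor-power argument. Suppose $\ch_2(F)>0$. For each $k\ge 1$, $F^{\otimes k}$ is slope-semistable of slope $0$, with $\rk F^{\otimes k}=\rho^k$ where $\rho:=\rk F$, and, using $c_1(F)=0$ and the vanishing of $\ch_2(F)^2$ on a surface, $\ch_2(F^{\otimes k})=k\rho^{k-1}\ch_2(F)$; Riemann--Roch then gives $\chi(F^{\otimes k})=k\rho^{k-1}\ch_2(F)+\rho^k\chi(\OO_S)$, which grows strictly faster than $\rk F^{\otimes k}$. On the other hand, a slope-semistable sheaf of slope $\le 0$ on a fixed polarized surface has $h^0$ at most its rank: one realizes the subsheaf generated by the global sections as a quotient of a trivial sheaf, restricts to a general smooth curve $C\in|H|$, and uses that a globally generated bundle of degree $0$ on a smooth connected curve is trivial. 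Hence $h^0(F^{\otimes k})\le\rho^k$; and since $\lEnd(E)$ is self-dual, Serre duality gives $h^2(F^{\otimes k})=h^0(F^{\otimes k}\otimes\omega_S)$. If $K_S\cdot H\le 0$ the same lemma bounds this by $\rho^k$, and then $\chi(F^{\otimes k})\le h^0(F^{\otimes k})+h^2(F^{\otimes k})\le 2\rho^k$ forces $k\,\ch_2(F)\le\bigl(2-\chi(\OO_S)\bigr)\rho$ for all $k$, a contradiction. The remaining case $K_S\cdot H>0$, in which $F^{\otimes k}\otimes\omega_S$ is semistable of the fixed positive slope $(K_S\cdot H)/H^2$, is where the real work lies: one needs an upper bound for $h^0$ of a semistable sheaf of given slope that is still essentially linear in the rank --- of strictly lower order in $k$ than the $k\rho^{k-1}$ growth of $\chi(F^{\otimes k})$. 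I expect this estimate to be the main obstacle; Bogomolov supplies it through the geometry of the projectivization $\P(F)$ (too many sections of a symmetric power produce a destabilizing subsheaf) and Gieseker through a direct homological argument, and it is genuinely more than a routine calculation.

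\textbf{Alternative.} Over $\C$ one can bypass the combinatorial estimate analytically: after replacing $E$ by the double dual of its Jordan--H\"older graded object --- which is slope-polystable and reflexive with the same $\ch_{\le 2}$ --- the Donaldson--Uhlenbeck--Yau theorem (extended to reflexive sheaves by Bando--Siu) furnishes a Hermite--Einstein metric, and the L\"ubke inequality for such a metric, together with the Hodge index theorem, gives $\Delta(E)\ge 0$ directly in any dimension; an arbitrary algebraically closed field of characteristic zero is reduced to $\C$ by a base-change argument. This proof, however, is transcendental and uses the polarization in an analytic way.
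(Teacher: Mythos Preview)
The paper does not give a proof of this theorem; it is quoted as background with a reference to \cite{Bog78:inequality}. There is therefore no ``paper's own proof'' to compare your attempt against.

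That said, your outline follows the classical route and is essentially correct. The reduction to surfaces via Mehta--Ramanathan, the passage to $F=\lEnd(E)$ together with the Hodge index step, and the tensor-power growth of $\chi(F^{\otimes k})$ are all standard and correctly executed. Your argument for $h^0(F^{\otimes k})\le\rho^k$ via restriction to a curve is also fine (you may need $C\in|mH|$ with $m$ large enough for Bertini to guarantee a smooth connected curve, but this is cosmetic). You have honestly identified the one substantial missing ingredient: a uniform bound $h^0(G)\le C\cdot\rk(G)$ for semistable $G$ of a fixed positive slope, needed to control $h^2(F^{\otimes k})$ when $K_S\cdot H>0$. This is indeed the heart of Bogomolov's argument, and without it what you have written is a sketch rather than a proof --- but you say so explicitly, and you point to the two standard ways of filling it (the projectivization argument and Gieseker's approach). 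The analytic alternative via Donaldson--Uhlenbeck--Yau that you mention is a legitimate complete proof over $\C$, transported to an arbitrary algebraically closed field of characteristic zero by the Lefschetz principle.

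One small correction: the Mehta--Ramanathan restriction theorem holds in arbitrary characteristic; the characteristic-zero hypothesis enters exactly where you later say it does, namely in the semistability of tensor products.
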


In positive characteristic, this theorem is only true for special varieties. It holds for example on $\P^3$ and abelian threefolds (see \cite{Lan04:positive_char} for more details). We will only use it for $\P^3$. Therefore, most results in this article also hold in positive characteristic.

\subsection{Tilt stability}

In order to prove our main Theorem, we would like to obtain inequalities on third Chern characters. It turns out that the derived category serves as a natural setting for this. Classically, algebraic geometers have varied the slope function $\mu$ via the polarization $H$. Bridgeland's brilliant idea was to view the abelian category $\Coh(X)$ as a variable input, and to change it to different hearts of bounded t-structures. He introduced \emph{tilt stability} for K3 surfaces in \cite{Bri08:stability_k3}. After work by Arcara-Bertram \cite{AB13:k_trivial} on arbitrary surfaces, Bayer-Macr\`i-Toda (\cite{BMT14:stability_threefolds}) defined the notion for threefolds which we will proceed to explain.

If $\beta$ is any real number, then $\ch^{\beta}$ is defined to be $e^{-\beta H} \cdot \ch$. If $\beta \in \Z$ and $E \in \Db(X)$, this is simply $\ch(E(-\beta H))$. It expands as follows:

\begin{align*}
\ch^{\beta}_0 &=  \ch_0, \ \ch^{\beta}_1 = \ch_1 - \beta H \cdot \ch_0, \ \ch^{\beta}_2 = \ch_2 - \beta H \cdot \ch_1 + \frac{\beta^2}{2} H^2 \cdot \ch_0,\\
\ch^{\beta}_3 &= \ch_3 - \beta H \cdot \ch_2 + \frac{\beta^2}{2} H^2 \cdot \ch_1 - \frac{\beta^3}{6} H^3 \cdot \ch_0.
\end{align*}

In order to construct a new heart of a bounded t-structure, we need to tilt the category $\Coh(X)$. We refer to \cite{HRS96:tilting} for more details on tilting.

Let $\TT_{\beta} \subset \Coh(X)$ be the extension closure of all slope-semistable sheaves with slope strictly larger than $\beta$. Note that $\TT_{\beta}$ contains all torsion sheaves. By $\FF_{\beta}$, we denote the extension closure of all slope-semistable sheaves with slope smaller than or equal to $\beta$. The new heart is then the extension closure $\Coh^{\beta}(X) := \langle \FF_{\beta}[1],\TT_{\beta} \rangle$.

The next step is to define a slope function. We fix another real number $\alpha > 0$ and define the \emph{tilt-slope} as
\[
\nu_{\alpha, \beta} := \frac{H^{n-2} \cdot \ch^{\beta}_2 - \frac{\alpha^2}{2} H^n \cdot \ch^{\beta}_0}{H^{n-1} \cdot \ch^{\beta}_1}.
\]
As in the case of slope stability, an object $E \in \Coh^{\beta}(X)$ is \emph{tilt-(semi)stable} (or \emph{$\nu_{\alpha,\beta}$-(semi)stable}) if for any non-zero subobject $A \subset E$ the inequality $\nu_{\alpha, \beta}(A) < (\leq) \nu_{\alpha, \beta}(E/A)$ holds. In particular, the classical Bogomolov inequality also holds in this general setting.

\begin{thm}[{\cite[Corollary 7.3.2]{BMT14:stability_threefolds}}]
If $E \in \Coh^{\beta}(X)$ is $\nu_{\alpha, \beta}$-semistable, then $\Delta(E) \geq 0$.
\end{thm}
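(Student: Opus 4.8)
The plan is to derive the inequality from the classical Bogomolov inequality (Theorem~\ref{thm:bogomolov_inequality}) by a wall-crossing argument. Two elementary facts drive the reductions. First, $\Delta$ is an invariant of the twisted Chern character: expanding $\ch^{\beta}$ one checks directly that $(H^{n-1}\cdot\ch^{\beta}_1(E))^2 - 2(H^n\cdot\ch^{\beta}_0(E))(H^{n-2}\cdot\ch^{\beta}_2(E)) = \Delta(E)$ for every $\beta$, while also $\Delta(E[1]) = \Delta(E)$ and $\Delta(E\otimes\OO_X(D)) = \Delta(E)$. Second, if $E\in\Coh^{\beta}(X)$ is $\nu_{\alpha,\beta}$-semistable of finite tilt-slope with Jordan--H\"older factors $E_1,\dots,E_m$, then each $E_i$ has tilt-slope $\nu := \nu_{\alpha,\beta}(E)$ and $H^{n-1}\cdot\ch^{\beta}_1(E_i) > 0$; setting $(p_i,q_i) := (H^{n-1}\cdot\ch^{\beta}_1(E_i),\,H^n\cdot\ch^{\beta}_0(E_i))$ and using $H^{n-2}\cdot\ch^{\beta}_2(E_i) = \nu p_i + \tfrac{\alpha^2}{2}q_i$ one finds $\Delta(E_i) = \phi(p_i,q_i)$ and $\Delta(E) = \phi(\sum_i p_i,\sum_i q_i)$, where $\phi(p,q) := p^2 - 2\nu pq - \alpha^2 q^2$. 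Since $\alpha > 0$ the quadratic form $\phi$ has signature $(1,1)$ and all the vectors $(p_i,q_i)$ lie in the cone $\{p>0\}$, so the reverse Cauchy--Schwarz inequality for timelike vectors gives $\Delta(E)\geq\sum_i\Delta(E_i)$. Consequently it suffices to prove $\Delta(E)\geq 0$ for $\nu_{\alpha,\beta}$-stable $E$; moreover the case $H^{n-1}\cdot\ch^{\beta}_1(E) = 0$ is immediate, since such an $E$ is, after twisting and shifting, an extension of torsion sheaves supported in codimension $\geq 2$ and of slope-semistable sheaves of slope exactly $\beta$, for which $\Delta\geq 0$ by Theorem~\ref{thm:bogomolov_inequality}.

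Next I would induct on $H^{n-1}\cdot\ch^{\beta}_1(E)\geq 0$ (a quantity with the discreteness needed to make the induction well-founded, and which strictly decreases under passing to a proper sub- or quotient object of finite tilt-slope). So fix a $\nu_{\alpha_0,\beta_0}$-stable object $E$ of finite tilt-slope. The set of $\alpha > 0$ for which $E$ is $\nu_{\alpha,\beta_0}$-semistable is nonempty and closed, and its boundary is a finite set of numerical walls. If this boundary contains a point $\alpha_1$, then at $(\alpha_1,\beta_0)$ the object $E$ admits a Jordan--H\"older filtration whose factors are $\nu_{\alpha_1,\beta_0}$-semistable with strictly smaller $H^{n-1}\cdot\ch^{\beta}_1$; the inductive hypothesis gives $\Delta\geq 0$ for each factor, and the Lorentzian estimate above gives $\Delta(E)\geq 0$. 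The remaining possibility is that $E$ is $\nu_{\alpha,\beta_0}$-stable for all $\alpha > 0$.

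This last case is the technical heart and the main obstacle: one must establish the large-volume limit. As $\alpha\to\infty$ the coefficient of $\alpha^2$ in $\nu_{\alpha,\beta_0}$ dominates, and a comparison of leading terms shows that for $\alpha \gg 0$ the tilt-Harder--Narasimhan filtration of any object of $\Coh^{\beta_0}(X)$ refines the slope-Harder--Narasimhan filtrations of its cohomology sheaves $\HH^{-1}$ and $\HH^0$. Hence a $\nu_{\alpha,\beta_0}$-stable object with $\alpha$ large is, after the twist/shift normalizations, either a slope-semistable torsion-free sheaf, the shift of one, or a torsion sheaf supported in codimension $\geq 2$. In each case $\ch(E) = \ch(\HH^0(E)) - \ch(\HH^{-1}(E))$ with both cohomology sheaves slope-semistable and at most one of them supported in codimension $\geq 2$; expanding $\Delta(E)$, using $H^{n-2}\cdot\ch_2\geq 0$ for a codimension-$\geq 2$ sheaf, and applying Theorem~\ref{thm:bogomolov_inequality} to each cohomology sheaf yields $\Delta(E)\geq 0$. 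The remaining work is entirely in this large-volume analysis --- the openness of tilt-semistability, the local finiteness of walls, and the comparison of Harder--Narasimhan filtrations for $\alpha\gg 0$ --- whereas the $\Delta$-identities, the Lorentzian inequality, and the wall-crossing induction are routine.
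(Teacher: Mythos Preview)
The paper does not prove this statement; it is quoted from \cite[Corollary~7.3.2]{BMT14:stability_threefolds} without argument, so there is nothing here to compare against. Your overall strategy --- reduce to stable objects, push $\alpha\to\infty$ along $\beta=\beta_0$, and invoke the classical Bogomolov inequality in the large-volume limit --- is the standard route. Two points in your write-up, however, are genuine gaps rather than routine omissions.

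First, the Lorentzian step is misstated: for a signature-$(1,1)$ form the inequality $\phi\bigl(\sum_i v_i\bigr)\ge\sum_i\phi(v_i)$ does \emph{not} follow merely from $p_i>0$; it requires the $v_i$ to lie in the closed forward light cone. With $\nu=0$ and $\alpha=1$ one has $\phi(p,q)=p^2-q^2$, and $v_1=(1,3)$, $v_2=(2,1)$ give $\phi(v_1+v_2)=-7<-5=\phi(v_1)+\phi(v_2)$. The reverse Cauchy--Schwarz therefore only becomes available \emph{after} the inductive hypothesis supplies $\Delta(E_i)\ge 0$, so the reduction to stable objects must be folded into the induction rather than stated beforehand. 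Second, the induction on $H^{n-1}\!\cdot\ch_1^{\beta_0}(E)$ is not well-founded when $\beta_0$ is irrational, since the values $a-\beta_0 b$ with $(a,b)$ ranging over a rank-two lattice are dense in $\R$; your parenthetical assertion of ``the discreteness needed'' is simply false in that case. This is entangled with the ``local finiteness of walls'' you postpone: in most treatments that finiteness is itself \emph{deduced} from $\Delta\ge 0$ via the support property, so appealing to it here is circular. The proof in \cite{BMT14:stability_threefolds} sidesteps both issues by arguing directly on the $\mu$-Harder--Narasimhan factors of $\HH^{-1}(E)$ and $\HH^0(E)$ for a single tilt-semistable $E$, combining the classical Bogomolov inequality with the Hodge index theorem; alternatively, your scheme can be salvaged by first treating rational $\beta_0$, where the induction is well-founded, and then passing to arbitrary $\beta_0$ by a limit.
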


Proposition 14.2 in \cite{Bri08:stability_k3} is only stated for K3 surfaces, but the proof is no different in this more general setup. Together with \cite[Lemma 2.7]{BMS16:abelian_threefolds} we get the following proposition.

\begin{prop}
\label{prop:large_volume_limit}
Let $E \in \Coh^{\beta}(X)$. Then $E$ is $\nu_{\alpha, \beta}$-(semi)stable for $\alpha \gg 0$ and $\beta < \mu(E)$ if and only if it is $2$-GS-(semi)stable. If $E$ is $\nu_{\alpha, \beta}$-semistable for $\alpha \gg 0$ and $\beta > \mu(E)$, then $\HH^{-1}(E)$ is slope-semistable and $\HH^0(E)$ is zero or supported in dimension smaller than or equal to one.
\end{prop}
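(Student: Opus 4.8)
The plan is to follow the standard "large volume limit" picture, adapting the proof of \cite[Proposition 14.2]{Bri08:stability_k3} from K3 surfaces to the present setting, and then feeding in \cite[Lemma 2.7]{BMS16:abelian_threefolds} to handle the cohomology statements. First I would record the asymptotic behavior of $\nu_{\alpha,\beta}$ as $\alpha \to \infty$. For an object $E$ with $H^{n-1}\cdot\ch_1^\beta(E) > 0$ one has $\nu_{\alpha,\beta}(E) \to -\infty$, and the order in which two such slopes diverge is governed by the reduced Hilbert-type polynomial $P_2$: writing things out, comparing $\nu_{\alpha,\beta}(A)$ with $\nu_{\alpha,\beta}(E/A)$ for large $\alpha$ is the same as comparing $P_2(A,m)$ with $P_2(E/A,m)$ for large $m$ (after normalizing by leading coefficients, exactly as in the definition of the polynomial order). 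This is the bridge to $2$-GS-stability.

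Next I would treat the two regimes $\beta < \mu(E)$ and $\beta > \mu(E)$ separately, since the heart $\Coh^\beta(X)$ contains $E$ in genuinely different ways. For $\beta < \mu(E)$, a slope-semistable sheaf $E$ lies in $\TT_\beta \subset \Coh^\beta(X)$, and any $\nu_{\alpha,\beta}$-subobject $A \subset E$ in $\Coh^\beta(X)$ fits into an exact sequence of sheaves; by the asymptotic comparison above, $\nu_{\alpha,\beta}$-stability for $\alpha \gg 0$ is equivalent to the inequality $P_2(A,m) < (\leq) P_2(E/A,m)$ for all $m \gg 0$, i.e. to $2$-GS-stability. One has to be slightly careful about subobjects $A$ of $E$ in the tilted heart that are not subsheaves, but the long exact cohomology sequence together with the fact that $\HH^{-1}$ of a subobject lies in $\FF_\beta$ and $\HH^0$ in $\TT_\beta$ reduces everything to subsheaves in the limit; torsion and dimension considerations rule out the degenerate cases. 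For $\beta > \mu(E)$, the sheaf $E$ no longer lies in $\Coh^\beta(X)$; instead, if $E \in \Coh^\beta(X)$ is $\nu_{\alpha,\beta}$-semistable for $\alpha \gg 0$, one invokes \cite[Lemma 2.7]{BMS16:abelian_threefolds}: the large-$\alpha$ limit forces $E$ to be, up to the shift, built from a slope-semistable sheaf in $\FF_\beta$ sitting in degree $-1$ and a sheaf supported in dimension $\leq 1$ in degree $0$. Concretely, $\HH^0(E)$ cannot have a component of dimension $\geq 2$ because such a component would itself be a quotient object in $\Coh^\beta(X)$ with $\nu_{\alpha,\beta} \to -\infty$ faster than permitted, destabilizing $E$; and $\HH^{-1}(E) \in \FF_\beta$ must be slope-semistable since a destabilizing subsheaf of $\HH^{-1}(E)$ would, after the shift, produce a destabilizing subobject of $E$.

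The main obstacle I anticipate is the careful bookkeeping in the tilted heart: translating a $\nu_{\alpha,\beta}$-destabilizing subobject (an object of $\Coh^\beta(X)$, hence a two-term complex a priori) into honest sheaf-theoretic data to which the polynomial order and $2$-GS-stability apply, while controlling the error terms that appear at finite $\alpha$ and vanish only in the limit. This is where one needs the Bogomolov inequality in $\Coh^\beta(X)$ (the theorem quoted just above the proposition) to bound the discriminants of potential destabilizers and thereby guarantee that only finitely many numerical classes can appear, so that "for $\alpha \gg 0$" is uniform. Once that finiteness is in hand, the asymptotic comparison of $\nu_{\alpha,\beta}$ with $P_2$ does the rest, and the two implications of the proposition follow by matching up the definitions on each side.
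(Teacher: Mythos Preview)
Your proposal is correct and matches the paper's approach exactly: the paper does not give an independent proof but simply asserts that \cite[Proposition 14.2]{Bri08:stability_k3} goes through verbatim in this generality for the first statement, and then invokes \cite[Lemma 2.7]{BMS16:abelian_threefolds} for the second. Your sketch fleshes out precisely these two ingredients, so there is nothing to add.
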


\begin{lem}[{\cite[Proposition 3.12]{LM16:examples_tilt}}]
\label{lem:h_minus_one_reflexive}
If $E \in \Coh^{\beta}(X)$ is tilt-semistable for $\beta > \mu(E)$, then $\HH^{-1}(E)$ is reflexive.
\end{lem}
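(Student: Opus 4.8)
I would prove the lemma by contradiction, passing to the reflexive hull of $F:=\HH^{-1}(E)$. First two preliminary remarks. The statement is vacuous unless $F\neq 0$, and indeed the hypothesis $\beta>\mu(E)$ already forces $\ch_0(E)<0$: every object of $\Coh^{\beta}(X)$ satisfies $H^{n-1}\cdot\ch^{\beta}_1\geq 0$, so $\ch_0(E)\geq 0$ would give $\mu(E)\geq\beta$ (with the convention $\mu=+\infty$ when $\ch_0=0$). Hence $H^{n-1}\cdot\ch^{\beta}_1(E)=H^n\ch_0(E)\bigl(\mu(E)-\beta\bigr)>0$, so $\nu_{\alpha,\beta}(E)$ is finite, and $F$ is a nonzero torsion-free sheaf in $\FF_{\beta}$; recall too the canonical short exact sequence $0\to F[1]\to E\to\HH^0(E)\to 0$ in $\Coh^{\beta}(X)$. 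Now assume $F$ is not reflexive and set $Q:=F^{**}/F$. Then $Q\neq 0$, and since $F$ and $F^{**}$ are torsion-free and agree in codimension one, $Q$ is supported in codimension at least two.

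The crucial step is that $F^{**}$ still lies in $\FF_{\beta}$, i.e.\ $\mu_{\max}(F^{**})\leq\beta$. If $G\subseteq F^{**}$ is the maximal destabilizing subsheaf, then $G/(G\cap F)$ embeds into $Q$, so it has rank zero and vanishing $\ch_1$; hence $G\cap F\subseteq F$ has the same rank and $H^{n-1}\cdot\ch_1$ as $G$, giving $\mu_{\max}(F)\geq\mu(G\cap F)=\mu(G)=\mu_{\max}(F^{**})$, and the reverse inequality is obvious. Thus $F^{**}[1]\in\Coh^{\beta}(X)$, and the short exact sequence of sheaves $0\to F\to F^{**}\to Q\to 0$, shifted by one, is an exact triangle $Q\to F[1]\to F^{**}[1]\to Q[1]$ all of whose vertices $Q,F[1],F^{**}[1]$ lie in the heart $\Coh^{\beta}(X)$. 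A triangle with vertices in the heart is a short exact sequence there, so $Q$ is a subobject of $F[1]$; composing with $F[1]\hookrightarrow E$ exhibits $Q$ as a subobject of $E$ in $\Coh^{\beta}(X)$.

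To conclude: since $Q$ is supported in codimension $\geq 2$ we have $\ch_0(Q)=0$ and $H^{n-1}\cdot\ch_1(Q)=0$, hence $H^{n-1}\cdot\ch^{\beta}_1(Q)=0$ and $\nu_{\alpha,\beta}(Q)=+\infty$. But $E$ is $\nu_{\alpha,\beta}$-semistable of finite tilt-slope, so it has no subobject of infinite slope --- contradiction. Therefore $Q=0$ and $\HH^{-1}(E)=F$ is reflexive.

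I expect the main obstacle to be the second paragraph: verifying that passing to the reflexive hull does not increase the maximal slope, so that $F^{**}[1]$ remains in the tilted heart, together with the bookkeeping that a triangle with vertices in the heart is a short exact sequence there. The final contradiction relies on the standard convention that a torsion sheaf supported in codimension $\geq 2$ has tilt-slope $+\infty$ and therefore destabilizes every finite-slope semistable object; this is what makes the argument work for all $\alpha>0$ at once, and it also handles the case --- which genuinely occurs, e.g.\ for the square of the ideal sheaf of a point on a threefold --- in which $F^{**}/F$ is supported in codimension three or more, where a comparison of $\ch^{\beta}_2$ alone would see nothing.
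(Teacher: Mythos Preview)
The paper does not give its own proof of this lemma; it merely cites \cite[Proposition~3.12]{LM16:examples_tilt}. Your argument is correct and is essentially the standard proof found in that reference: pass to the reflexive hull, observe that $F^{**}/F$ is supported in codimension $\geq 2$ and hence sits in $\Coh^{\beta}(X)$ with infinite tilt-slope, check that $F^{**}[1]$ remains in the heart so that $Q\hookrightarrow F[1]\hookrightarrow E$ is a chain of subobjects, and derive a contradiction with semistability. The preliminary observation that $\beta>\mu(E)$ forces $\ch_0(E)<0$ and $H^{n-1}\cdot\ch_1^{\beta}(E)>0$ is accurate, as is the verification that $\mu_{\max}(F^{**})=\mu_{\max}(F)$ via intersecting the maximal destabilizer with $F$.
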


\subsection{Wall and chamber structure}

We fix the lattice $\Lambda = \Z \oplus \Z \oplus \tfrac{1}{2} \Z$. Then we have a homomorphism $H \cdot \ch_{\leq 2}: K_0(X) \to \Lambda$ given by $x \mapsto (H^2 \cdot \ch_0(x), H \cdot \ch_1(x), \ch_2(x))$. The goal is to understand how the set of semistable objects varies with $(\alpha, \beta)$. If $v, w \in \Lambda$ are linearly independent, then the \textit{numerical wall} $W(v, w)$ is the set of $(\alpha, \beta)$ in the upper half plane such that $\nu_{\alpha, \beta}(v) = \nu_{\alpha, \beta}(w)$. Such a numerical wall $W$ is an \textit{actual wall for $v$} if the set of $\nu_{\alpha, \beta}$-semistable objects with class $v$ is different on both sides of $W$.

\begin{thm}[Wall Structure]
\label{thm:Bertram}
Let $v \in \Lambda$ be a class with $\Delta(v) \geq 0$.
\begin{enumerate}
  \item If $v_0 = 0$, then all numerical walls with respect to $v$ are semicircles with center at $\alpha = 0$ and $\beta = v_2/v_1$.
  \item If $v_0 \neq 0$, then there is a numerical wall given by $\beta = v_1/v_0$. All other walls are semicircles with center on the $\beta$-axis, whose apex lies on the hyperbola $\nu_{\alpha, \beta}(v) = 0$.
  \item Any two numerical walls with respect to $v$ have empty intersection.
  \item Assume there is an exact sequence of tilt-semistable objects $0 \to A \to E \to B \to 0$ with $\ch_{\leq 2}(E) = v$. If this sequence destabilizes $E$ along $W(\ch_{\leq 2}(A), \ch_{\leq 2}(B))$, then
  \[
  \Delta(A) + \Delta(B) \leq \Delta(E).
  \]
  Additionally, equality implies $H \cdot \ch_{\leq 2}(A) = 0$ or $H \cdot \ch_{\leq 2}(B) = 0$.
  \item If $\Delta(v) = 0$, then the only possible actual wall is $\beta = v_1/v_0$. In particular, if $v$ is the class of a line bundle, then there is no wall in tilt stability with respect to $v$.
\end{enumerate}
\end{thm}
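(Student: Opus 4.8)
This is by now a standard result — essentially Bertram's wall-crossing lemma — so the plan is to assemble its proof from one explicit computation together with the tilt-Bogomolov inequality quoted above. First I would write out the equation $\nu_{\alpha,\beta}(v)=\nu_{\alpha,\beta}(w)$ of a numerical wall: denoting by $v=(v_0,v_1,v_2)$ and $w=(w_0,w_1,w_2)$ the images under $H\cdot\ch_{\leq 2}$ and clearing the two linear denominators, after cancellations one is left with
\[
(\alpha^2+\beta^2)\,a_{01}-2\beta\,a_{02}+2\,a_{12}=0,\qquad a_{ij}:=v_iw_j-v_jw_i .
\]
I would also record the two Plücker-type relations $v_0a_{12}-v_1a_{02}+v_2a_{01}=0$ and $w_0a_{12}-w_1a_{02}+w_2a_{01}=0$, each the expansion of a $3\times 3$ determinant with a repeated row.

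With this equation in hand, (1)--(3) are bookkeeping. If $v_0=0$, then $a_{01}=-v_1w_0$ and $a_{02}=-v_2w_0$, so every numerical wall is a circle centered at $(\beta,\alpha)=(v_2/v_1,0)$ with $w$-dependent radius; being concentric, distinct such walls are disjoint, proving (1) and the $v_0=0$ case of (3). If $v_0\neq 0$, the locus $\beta=v_1/v_0$ is where $H^{n-1}\ch_1^\beta(v)=0$, i.e. the vertical wall, and every other wall has $a_{01}\neq 0$, hence is a semicircle centered on the $\beta$-axis at $s=a_{02}/a_{01}$ with radius-squared $\rho^2=s^2-2a_{12}/a_{01}$; feeding in the first Plücker relation rewrites this as $\rho^2=s^2-2(sv_1-v_2)/v_0$, which is precisely the statement that the apex $(s,\rho)$ satisfies $\nu_{\alpha,\beta}(v)=0$, giving (2). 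For (3), a semicircular wall contains a point with $\beta=v_1/v_0$ only if $(v_1/v_0-s)^2<\rho^2$, i.e. $\Delta(v)<0$; since $\Delta(v)\geq 0$ this rules out intersections with the vertical wall, and, because two of the displayed circle equations differ (after the Plücker substitution) only through $s$, any common point of two distinct semicircular walls would likewise be forced to have $\beta=v_1/v_0$, again impossible.

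The one substantive statement, and the main obstacle, is (4); I would treat the case that $W$ is a semicircle and dispose of the vertical wall $\beta=\mu(E)$ by the elementary estimate using $\mu(A)=\mu(B)=\mu(E)$. Fix a point $p=(\alpha_0,\beta_0)$ on the semicircle $W$ with $H^{n-1}\ch_1^{\beta_0}(E)\neq 0$, pass to $Z_p(x):=-H^{n-2}\ch_2^{\beta_0}(x)+\tfrac{\alpha_0^2}{2}H^n\ch_0(x)+i\,H^{n-1}\ch_1^{\beta_0}(x)\in\C$ (so that $\nu_{\alpha_0,\beta_0}=-\Real Z_p/\Imm Z_p$ and $\Imm Z_p\geq 0$ on $\Coh^{\beta_0}(X)$), and use the identity, checked by direct expansion,
\[
\Delta(x)=(\Imm Z_p(x))^2+2\,(H^n\ch_0(x))\,\Real Z_p(x)-\alpha_0^2\,(H^n\ch_0(x))^2 .
\]
On $W$ the values $Z_p(A),Z_p(B),Z_p(E)$ are positively proportional, say $Z_p(A)=\lambda Z_p(E)$ and $Z_p(B)=(1-\lambda)Z_p(E)$ with $\lambda\in[0,1]$. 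When $\lambda\in\{0,1\}$ one of $Z_p(A),Z_p(B)$ vanishes and so lies in $\ker Z_p$, which is spanned by a class of discriminant $-\alpha_0^2<0$; since $\Delta(A),\Delta(B)\geq 0$ this forces $H\cdot\ch_{\leq 2}(A)=0$ or $H\cdot\ch_{\leq 2}(B)=0$, and we are done. When $\lambda\in(0,1)$, substituting the proportionality into the identity and combining the tilt-Bogomolov inequalities $\Delta(A),\Delta(B)\geq 0$ with a single application of AM--GM to the two resulting $\alpha_0^2(H^n\ch_0)^2$-terms yields $\Delta(A)+\Delta(B)\leq\Delta(E)$. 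I expect the delicate point to be the equality case: one reads off that equality forces $\Delta(A)=\Delta(B)=\Delta(E)=0$, so the images of $\ch_{\leq 2}(A)$ and $\ch_{\leq 2}(B)$ are isotropic for the signature-$(2,1)$ form $\Delta$ and mutually orthogonal (since $\Delta(E)-\Delta(A)-\Delta(B)=2\langle\ch_{\leq 2}(A),\ch_{\leq 2}(B)\rangle=0$), hence linearly dependent; as $W(\ch_{\leq 2}(A),\ch_{\leq 2}(B))$ being a numerical wall requires linear independence, one of $H\cdot\ch_{\leq 2}(A)$, $H\cdot\ch_{\leq 2}(B)$ must vanish.

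Finally, (5) drops out of the equality analysis in (4): if $\Delta(v)=0$ and a semicircular wall $W$ were actual, then a destabilizing sequence of tilt-semistable objects along $W$ would satisfy $\Delta(A)+\Delta(B)\leq\Delta(E)=0$, forcing equality and hence $H\cdot\ch_{\leq 2}(A)=0$ or $H\cdot\ch_{\leq 2}(B)=0$; such a factor has $\nu_{\alpha,\beta}\equiv+\infty$, so sharing the tilt-slope of $E$ along $W$ forces $H^{n-1}\ch_1^\beta(E)=0$ there, i.e. $W\subseteq\{\beta=v_1/v_0\}$, contradicting that $W$ is a semicircle. When $v$ is the class of a line bundle one has $v_1/v_0=\mu$, and since the vertical locus $\beta=\mu$ is never an actual wall there is none at all; in the setting of this paper $\Pic(X)=\Z\cdot H$, so in fact $\Delta(v)=0$ for line bundles and the preceding sentence applies verbatim.
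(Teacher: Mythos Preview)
The paper does not prove this theorem itself; immediately after the statement it defers to the literature, attributing (i)--(iii) to \cite{Mac14:nested_wall_theorem} and (iv)--(v) to Appendix~A of \cite{BMS16:abelian_threefolds}. Your sketch is correct and is essentially the argument found in those references: the explicit circle equation for (i)--(iii), and for (iv) the passage to the central charge $Z_p$, the identity $\Delta(x)=(\Imm Z_p(x))^2+2\ch_0(x)\Real Z_p(x)-\alpha_0^2\ch_0(x)^2$, and the Bogomolov-plus-convexity estimate. One detail worth making explicit when you write it out is that ``combining $\Delta(A),\Delta(B)\geq 0$ with AM--GM'' really means forming the weighted sum $\tfrac{1-\lambda}{\lambda}\Delta(A)+\tfrac{\lambda}{1-\lambda}\Delta(B)\geq 0$ before invoking the perfect-square identity $\tfrac{1-\lambda}{\lambda}a^2+\tfrac{\lambda}{1-\lambda}b^2-2ab=\bigl(\sqrt{\tfrac{1-\lambda}{\lambda}}\,a-\sqrt{\tfrac{\lambda}{1-\lambda}}\,b\bigr)^2$; the unweighted sum is not enough.

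One small caveat on the second sentence of (v): your argument that line bundles have no walls rests on $\Delta(L)=0$, which you correctly flag as specific to $\Pic(X)=\Z\cdot H$. For general $(X,H)$ a line bundle with $c_1$ not numerically proportional to $H$ has $\Delta(L)>0$ by Hodge index, and then the first sentence of (v) does not apply; one needs a separate argument (e.g.\ that a line bundle, being slope-stable of rank one, admits no destabilising subobject in $\Coh^\beta$). Since the paper only ever invokes this on $\P^3$, where your hypothesis holds, nothing downstream is affected.
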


The main result of \cite{Mac14:nested_wall_theorem} is essentially parts (i) - (iii). Appendix A in \cite{BMS16:abelian_threefolds} gives a proof of (iv) and (v). We write $\rho_W = \rho(v,w)$ for the \emph{radius} of $W(v, w)$ and denote the $\beta$-coordinate of its \emph{center} by $s_W = s(v,w)$.

The following lemma roughly says that walls induced by large rank subobjects are small.

\begin{lem}[{\cite[Proposition 8.3]{CH16:ample_cone_plane}, \cite[Lemma 2.4]{MS18:space_curves}}]
\label{lem:higherRankBound}
Let $A \into E$ be an injective morphism of $\nu_{\alpha, \beta}$-semistable objects in $\Coh^{\beta}(X)$ with $\nu_{\alpha, \beta}(A) = \nu_{\alpha, \beta}(E)$. If $W = W(A, E)$ is a semicircular wall and $\ch_0(A) > \ch_0(E) \geq 0$, then
\[
\rho_W^2 \leq \frac{\Delta(E)}{4 H^n \cdot \ch_0(A) (H^n \cdot \ch_0(A) - H^n \cdot \ch_0(E))}.
\]
\end{lem}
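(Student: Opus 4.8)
The plan is to evaluate the discriminants of $A$, $E$, and $B := E/A \in \Coh^\beta(X)$ at the \emph{apex} of the semicircular wall $W$, where the wall equation collapses each discriminant into a difference of two squares, and then to feed in the Bogomolov inequality for tilt-semistable objects. As a first step I would record that $\ch(B) = \ch(E) - \ch(A)$, so $H^n\cdot\ch_0(B) = H^n\cdot\ch_0(E) - H^n\cdot\ch_0(A) < 0$, and that $B$ is itself $\nu_{\alpha,\beta}$-semistable with $\nu_{\alpha,\beta}(B) = \nu_{\alpha,\beta}(E)$: the tilt-slope of $B$ is forced by additivity of the central charge, and since $B$ --- and hence every Harder--Narasimhan factor of $B$ --- is a quotient of the semistable object $E$, all of those factors have tilt-slope $\geq \nu_{\alpha,\beta}(E)$, so $B$ has no strictly destabilizing quotient. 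In particular $\Delta(A)\geq 0$ and $\Delta(B)\geq 0$.

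The heart of the argument is the passage to the apex $(\alpha_0,\beta_0) = (\rho_W, s_W)$ of $W$. Since $\ch_0(A) > \ch_0(E) \geq 0$, the class $\ch_{\leq 2}(A)$ has nonzero rank, so Theorem \ref{thm:Bertram}(ii) implies that the apex of $W$ --- a semicircular numerical wall for $\ch_{\leq 2}(A)$ --- lies on the hyperbola $\nu_{\alpha,\beta}(\ch_{\leq 2}(A)) = 0$. Since the apex also lies on $W$ itself, it lies on $\nu_{\alpha,\beta}(\ch_{\leq 2}(F)) = 0$ for every $F \in \{A, B, E\}$; unwinding this equation gives $H^{n-2}\cdot\ch_2^{s_W}(F) = \tfrac{\rho_W^2}{2}\,H^n\cdot\ch_0(F)$, and since $\Delta$ is invariant under the twist $\ch\mapsto\ch^{s_W}$, one obtains the key identity
\[
\Delta(F) = \big(H^{n-1}\cdot\ch_1^{s_W}(F)\big)^2 - \rho_W^2\,\big(H^n\cdot\ch_0(F)\big)^2, \qquad F\in\{A,B,E\}.
\]

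To conclude, write $r_F := H^n\cdot\ch_0(F)$ and $c_F := H^{n-1}\cdot\ch_1^{s_W}(F)$; these are additive over $0\to A\to E\to B\to 0$, and $r_A > 0 > r_B$ with $r_A - r_E = -r_B > 0$. A short argument from $A,B\in\Coh^\beta(X)$ together with the location of the apex (again Theorem \ref{thm:Bertram}(ii)) shows $c_A, c_B > 0$. Combined with $\Delta(A)\geq 0$ and $\Delta(B)\geq 0$, the identity above gives $c_A \geq \rho_W\,r_A$ and $c_B \geq \rho_W\,(r_A - r_E)$; adding and squaring yields $c_E^2 \geq \rho_W^2\,(2r_A - r_E)^2$. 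Substituting $c_E^2 = \Delta(E) + \rho_W^2\,r_E^2$ and $(2r_A-r_E)^2 - r_E^2 = 4\,r_A(r_A - r_E)$ gives $\Delta(E) \geq 4\,\rho_W^2\, r_A(r_A - r_E)$, which is exactly the claimed bound since $r_A(r_A - r_E) > 0$.

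I expect the main obstacle to lie not in this computation but in two pieces of bookkeeping: verifying that $B = E/A$ is honestly $\nu_{\alpha,\beta}$-semistable on the wall (so that $\Delta(B)\geq 0$, and incidentally $\ch_{\leq 2}(B)\neq 0$), and the sign analysis giving $c_A, c_B > 0$. The latter uses that $A, B \in \Coh^\beta(X)$ forces $\mu(A)\geq\beta\geq\mu(B)$, which together with the fact that the apex of $W$ lies at distance at least $\rho_W$ from each of the vertical lines $\beta=\mu(A)$ and $\beta=\mu(B)$ pins down $\mu(B) < s_W < \mu(A)$, hence $c_A = r_A(\mu(A)-s_W) > 0$ and $c_B = r_B(\mu(B)-s_W) > 0$. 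Both steps require a little care with degenerate configurations (e.g. zero-dimensional torsion in $B$, or the boundary case $\ch_0(E) = 0$), and it is precisely here that the rank inequality $\ch_0(A) > \ch_0(E) \geq 0$ is used; none of it, however, is deep.
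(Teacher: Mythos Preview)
The paper does not give its own proof of this lemma; it is quoted verbatim from \cite[Proposition 8.3]{CH16:ample_cone_plane} and \cite[Lemma 2.4]{MS18:space_curves}. Your argument is correct and is exactly the standard one in those references: pass to the apex $(\rho_W,s_W)$ of the wall, where the vanishing $\nu_{\rho_W,s_W}(F)=0$ for $F\in\{A,E,B\}$ turns $\Delta(F)$ into $c_F^2-\rho_W^2 r_F^2$, and then feed in $\Delta(A)\geq 0$, $\Delta(B)\geq 0$ together with the sign information $c_A,c_B>0$.

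One small imprecision worth fixing: it is not true that every Harder--Narasimhan factor of $B$ is a quotient of $E$. What is true, and is all you need, is that every \emph{quotient} $Q$ of $B$ is a quotient of $E$; semistability of $E$ then gives $\nu_{\alpha,\beta}(Q)\geq\nu_{\alpha,\beta}(E)=\nu_{\alpha,\beta}(B)$, so $B$ has no destabilizing quotient and is semistable. The remainder of your bookkeeping (the location of the vertical walls $\beta=\mu(A)$, $\beta=\mu(B)$ relative to the semicircle, forcing $\mu(B)<s_W<\mu(A)$ and hence $c_A,c_B>0$) is clean and handles the boundary case $\ch_0(E)=0$ without change.
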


\subsection{Projective Space}

In the case of $\P^3$, further properties are known. To simplify notation, we use $\ch_i(E)$ in place of $H^{3 - i} \cdot \ch_i(E)$. Here, $H$ is the hyperplane class. The following inequality was conjectured in \cite{BMT14:stability_threefolds}. It was proven in \cite{Mac14:conjecture_p3} and brought to this precise form in \cite{BMS16:abelian_threefolds}.

\begin{thm}
\label{thm:p3_conjecture}
If $E \in \Coh^{\beta}(\P^3)$ is $\nu_{\alpha,\beta}$-semistable, then
\[
Q_{\alpha, \beta}(\P^3) := \alpha^2 \Delta(E) + 4\ch_2^{\beta}(E)^2 - 6\ch_1^{\beta}(E) \ch_3^{\beta}(E) \geq 0.
\]
\end{thm}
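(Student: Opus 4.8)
This inequality is Macr\`i's theorem \cite{Mac14:conjecture_p3}, recorded here in the normalization of \cite{BMS16:abelian_threefolds}; here is the strategy I would follow.

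\emph{Step 1 (formal reductions).} The plan is first to reduce to the most rigid situation. The function $E \mapsto Q_{\alpha,\beta}(E)$ is a quadratic form in the vector $(\ch^{\beta}_0(E),\ch^{\beta}_1(E),\ch^{\beta}_2(E),\ch^{\beta}_3(E))$, and it is negative semidefinite on the kernel of the tilt central charge $Z_{\alpha,\beta}=-\ch^{\beta}_2+\tfrac{\alpha^2}{2}\ch^{\beta}_0+\sqrt{-1}\,\ch^{\beta}_1$. By the standard support-property argument for such forms (the Jordan--H\"older factors of a semistable object have proportional central charges; see \cite{BMS16:abelian_threefolds}), it is enough to prove the inequality for $\nu_{\alpha,\beta}$-\emph{stable} $E$. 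A further direct manipulation, using the $\widetilde{\mathrm{GL}}_2^+(\R)$-action on the family of tilt stability conditions together with the disjoint, nested wall structure of Theorem \ref{thm:Bertram} (so that $E$ stays stable as the parameters are moved), shows that it suffices to treat $E$ with $\nu_{\alpha,\beta}(E)=0$, i.e.\ with $\ch^{\beta}_2(E)=\tfrac{\alpha^2}{2}\ch^{\beta}_0(E)$, in which case $Q_{\alpha,\beta}(E)=\alpha^2\Delta(E)+\alpha^4\ch^{\beta}_0(E)^2-6\ch^{\beta}_1(E)\ch^{\beta}_3(E)$.

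\emph{Step 2 (the geometry of $\P^3$).} This is the only place where $\P^3$ is used, and it is the crux. The plan is to exploit the strong full exceptional collection $\OO,\OO(1),\OO(2),\OO(3)$. Each line bundle $\OO(k)$ is tilt-stable without any walls (Theorem \ref{thm:Bertram}(v)); combining this with Serre duality on $\P^3$ and the slope comparison coming from $\nu_{\alpha,\beta}(E)=0$ should force $\Hom(\OO(k),E[j])=0$ for all $j$ outside a short range, once $k$ lies in the window adapted to $(\alpha,\beta)$. Feeding these vanishings into the Beilinson spectral sequence collapses it and exhibits $E$, up to shift, as built from a small number of the line bundles $\OO(k)$ (equivalently, as a stable representation of the quiver of the exceptional collection with bounded dimension vector). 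An induction on $\ch_0(E)$ (or on $\Delta(E)$), together with the wall-crossing inequality of Theorem \ref{thm:Bertram}(iv), then reduces everything to finitely many base cases, in each of which $Q_{\alpha,\beta}(E)\ge 0$ is a direct computation; for a single line bundle $Q_{\alpha,\beta}(\OO(k))=0$, which simultaneously pins down the equality locus.

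\emph{Main obstacle.} The reductions of Step 1 are soft: they use only the quadratic-form formalism and facts about the wall structure already quoted above. The difficulty is entirely in Step 2 — extracting enough $\Ext$-vanishing from tilt-stability to bound which complexes of line bundles on $\P^3$ can be tilt-semistable with $\nu_{\alpha,\beta}=0$, and organizing the induction cleanly. This is the technical heart of \cite{Mac14:conjecture_p3}, and it genuinely relies on the exceptional collection: the analogous inequality remains open for most threefolds, which is exactly why \cite{BMT14:stability_threefolds} could only conjecture it in general.
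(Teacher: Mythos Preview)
The paper does not supply its own proof of Theorem~\ref{thm:p3_conjecture}: it records the statement and attributes the proof to \cite{Mac14:conjecture_p3}, with the quadratic-form reformulation coming from \cite{BMS16:abelian_threefolds}. So there is nothing in the paper to compare your attempt against beyond those citations.

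That said, your sketch is a fair outline of the strategy in the cited references. Step~1 is exactly the BMS formalism: the quadratic form $Q_{\alpha,\beta}$ is negative semidefinite on the kernel of the tilt central charge, so one may pass to stable objects, and a deformation along the nested wall structure reduces to the case $\nu_{\alpha,\beta}(E)=0$, which is the original BMT form of the inequality. Step~2 correctly identifies the exceptional collection $\OO,\OO(1),\OO(2),\OO(3)$ as the geometric input specific to $\P^3$. One inaccuracy worth flagging: Macr\`i's actual argument is not an induction on $\ch_0$ or $\Delta$ reducing to finitely many base cases. Instead he shows that, for suitably chosen parameters, the heart obtained by a second tilt of $\Coh^{\beta}(\P^3)$ coincides with the finite-length heart generated by the exceptional collection (equivalently, the module category of the Beilinson quiver); on such a heart any stability function automatically has Harder--Narasimhan filtrations, giving a genuine Bridgeland stability condition, and the desired inequality is then nothing but the statement that the central charge lands in the correct half-plane. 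The $\Ext$-vanishings you mention are used to match the two hearts, not to run an induction. Your packaging via the Beilinson spectral sequence is morally adjacent but not how the published argument is organized.
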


By no accident, the set of $(\alpha, \beta)$ with $Q_{\alpha, \beta}(E) = 0$ is a numerical wall, i.e., it is equivalent to
\[
\nu_{\alpha, \beta}(E) = \nu_{\alpha, \beta}(\ch_1(E), 2 \ch_2(E), 3\ch_3(E)).
\]
We call this wall $W_Q = W_Q(E)$. Accordingly, its radius is $\rho_Q = \rho_Q(E)$, and its center is $s_Q = s_Q(E)$. The crucial consequence of this theorem and Lemma \ref{lem:higherRankBound} is that if
\[
\rho_Q^2 > \frac{\Delta(E)}{4 H^n r(H^n r - H^n \cdot \ch_0(E))}
\]
for some positive integer $r$, then all walls are induced by a semistable subobject or quotient $A$ with the property $0 < \ch_0(A) < r$. We finish the section with some results on line bundles and derived duals.

\begin{prop}[{\cite[Proposition 4.1, 4.5]{Sch15:stability_threefolds}}]
\label{prop:line_bundles_uniquely_stable}
Assume $E \in \Coh^{\beta}(\P^3)$ is tilt-semistable and $n,m$ are integers with $m > 0$ such that either
\begin{enumerate}
\item $v = m \ch(\OO(n))$, or
\item $v = -m\ch_{\leq 2}(\OO(n))$.
\end{enumerate}
Then $E \cong \OO(n)^{\oplus m}$, respectively $E \cong \OO(n)^{\oplus m}[1]$.
\end{prop}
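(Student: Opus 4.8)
The plan is to show that, in both cases, the large-volume behavior of $E$ forces $E$ itself (in case (i)) or its unique cohomology sheaf (in case (ii)) to be a reflexive, slope-semistable sheaf on $\P^3$ with $\ch_1=0$ and vanishing discriminant, and then to invoke the classical rigidity of such sheaves. First I would twist by $\OO(-n)$: tensoring with a line bundle is an autoequivalence carrying $\Coh^{\beta}(\P^3)$ to $\Coh^{\beta-n}(\P^3)$ and preserving tilt-semistability and walls, so we may assume $n=0$, i.e. $v=m\ch(\OO)$ in case (i) and $v=-m\ch_{\leq 2}(\OO)$ in case (ii). A direct computation gives $\Delta(v)=0$ in either case, so by Theorem \ref{thm:Bertram}(v) the only possible actual wall for $v$ is $\beta=v_1/v_0=0$; hence $E$ stays $\nu_{\alpha,\beta}$-semistable on the whole relevant side of the $\beta$-axis, in particular in the limit $\alpha\to\infty$. (The degenerate value $\beta=0$ is handled by a small perturbation.)

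For case (i), membership in $\Coh^{\beta}(\P^3)$ forces $\beta\le\mu(E)=0$, so Proposition \ref{prop:large_volume_limit} identifies $E$, for $\alpha\gg 0$, with a $2$-GS-semistable --- hence slope-semistable, torsion-free --- sheaf with $\ch(E)=(m,0,0,0)$. Passing to $E^{\vee\vee}$: it is reflexive and slope-semistable, and since $E\into E^{\vee\vee}$ has cokernel in codimension $\ge 2$ we get $\ch_2(E^{\vee\vee})\ge\ch_2(E)=0$; together with the Bogomolov inequality on $\P^3$ (valid in all characteristics) this gives $\Delta(E^{\vee\vee})=0$ and $\ch_{\le 2}(E^{\vee\vee})=(m,0,0)$. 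The classification of reflexive slope-semistable sheaves on $\P^3$ with $\ch_1=0$ and vanishing discriminant --- which, by restriction to a general plane and then a general line, reduces to the triviality of degree-zero semistable bundles on $\P^1$ --- then yields $E^{\vee\vee}\cong\OO^{\oplus m}$. Finally $\ch(E^{\vee\vee}/E)=\ch(\OO^{\oplus m})-\ch(E)=0$, and a sheaf supported in dimension $\le 1$ with vanishing Chern character is zero, so $E\cong\OO^{\oplus m}$.

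For case (ii), membership in $\Coh^{\beta}(\P^3)$ forces $\beta\ge\mu(E)=0$, so for $\alpha\gg 0$ Proposition \ref{prop:large_volume_limit} shows $\HH^{-1}(E)$ is slope-semistable and $\HH^0(E)$ is zero or supported in dimension $\le 1$, while Lemma \ref{lem:h_minus_one_reflexive} shows $\HH^{-1}(E)$ is reflexive. The crucial step is $\HH^0(E)=0$. It cannot be one-dimensional: then $\ch_2(\HH^0(E))>0$ and $\ch_1(\HH^0(E))=0$, which (comparing with $\ch_{\le 2}(E)=(-m,0,0)$) forces $\ch_2(\HH^{-1}(E))>0$ and hence $\Delta(\HH^{-1}(E))<0$, contradicting Bogomolov on $\P^3$. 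And it cannot be a nonzero zero-dimensional sheaf: then a skyscraper subsheaf $\OO_q\into\HH^0(E)$ lifts --- the obstruction in $\Ext^2(\OO_q,\HH^{-1}(E))$ vanishing once $q$ is chosen outside the (finite) non-locally-free locus of the reflexive sheaf $\HH^{-1}(E)$ --- to a monomorphism $\OO_q\into E$ in $\Coh^{\beta}(\P^3)$ of tilt-slope $+\infty$, contradicting semistability of $E$, whose tilt-slope is finite. Thus $E=\HH^{-1}(E)[1]$ with $\HH^{-1}(E)$ reflexive, slope-semistable and $\ch_{\le 2}(\HH^{-1}(E))=(m,0,0)$, so $\Delta(\HH^{-1}(E))=0$ and the same classification gives $\HH^{-1}(E)\cong\OO^{\oplus m}$, i.e. $E\cong\OO^{\oplus m}[1]$.

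I expect the main obstacle to be the classical input just quoted --- the rigidity of reflexive semistable sheaves with vanishing discriminant on $\P^3$ --- which carries all the real force of ``$\OO(n)$ is determined by its numerical class'', the tilt-stability arguments serving only to reduce to it; the one genuinely delicate new point is the elimination of $\HH^0(E)$ in case (ii), where one must produce an honest destabilizing subobject inside $\Coh^{\beta}(\P^3)$ and deal with points of $\HH^0(E)$ that might lie on the singular locus of $\HH^{-1}(E)$ (a minor variant of the skyscraper argument, e.g. thickening the test object, handles this).
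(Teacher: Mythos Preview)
The paper does not prove this proposition at all --- it is imported wholesale from \cite{Sch15:stability_threefolds}, so there is no ``paper's own proof'' to compare against. Your outline is a reasonable reconstruction and is essentially correct: reduce to $n=0$, use $\Delta(v)=0$ to kill all walls via Theorem~\ref{thm:Bertram}(v), invoke the large-volume limit, and then appeal to the rigidity of reflexive slope-semistable sheaves on $\P^3$ with $\ch_1=0$ and $\Delta=0$. You are also right that this last classical fact carries the real content; the tilt-stability part is bookkeeping.

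One organizational point in case~(ii) would remove the awkwardness you flag at the end. After you eliminate the one-dimensional part of $\HH^0(E)$ via Bogomolov, you already know $\ch_{\le 2}(\HH^{-1}(E))=(m,0,0)$. At that moment you can apply the classical input directly to the reflexive, slope-semistable sheaf $\HH^{-1}(E)$ and conclude $\HH^{-1}(E)\cong\OO^{\oplus m}$ \emph{before} worrying about the zero-dimensional piece. Once $\HH^{-1}(E)$ is locally free, $\Ext^2(\OO_q,\HH^{-1}(E))=0$ for every point $q$, so any skyscraper in $\HH^0(E)$ lifts to a subobject of $E$ of infinite tilt-slope, contradicting semistability; no ``thickening'' or avoidance of a singular locus is needed. (Indeed, for reflexive $F$ on a smooth threefold one always has $\Ext^1(\OO_q,F)=0$, so your lift-the-skyscraper map is governed entirely by $\Ext^2$, and that group is nonzero exactly at the non-locally-free points --- which is why the reordering matters.) With this adjustment the argument is clean and complete, modulo the classical rigidity statement you already isolated.
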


\begin{prop}[{\cite[Proposition 5.1.3]{BMT14:stability_threefolds}}]
\label{prop:tilt_derived_dual}
Let $E \in \Coh^{\beta}(\P^3)$ for $\beta \neq \mu(E)$ be a $\nu_{\alpha, \beta}$-semistable object. Then there exists a distinguished triangle 
\[
\tilde{E} \to \RlHom(E, \OO)[1] = \D(E) \to T[-1] \to \tilde{E}[1],
\]
where $\tilde{E} \in \Coh^{-\beta}(\P^3)$ is $\nu_{\alpha, -\beta}$-semistable, and $T$ is a zero-dimensional sheaf.
\end{prop}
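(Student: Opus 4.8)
The plan is to pin down the cohomology sheaves of $\D(E)$, define $\tilde E$ as a canonical truncation of $\D(E)$, and then transfer tilt-semistability through the self-duality $\D$. First recall that $\D=\RlHom(-,\OO)[1]$ is a contravariant exact functor on $\Db(\P^3)$ with $\D\circ\D\cong\id$, that it acts on Chern characters by $\ch_i\mapsto(-1)^{i+1}\ch_i$, and that consequently $\Delta(\D E)=\Delta(E)$, $\mu(\D E)=-\mu(E)$, $\ch_1^{-\beta}(\D E)=\ch_1^{\beta}(E)$, $\ch_2^{-\beta}(\D E)=-\ch_2^{\beta}(E)$, and $\nu_{\alpha,-\beta}(\D E)=-\nu_{\alpha,\beta}(E)$. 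Since $\D$ is contravariant, this sign change reverses tilt-slope inequalities and interchanges sub- and quotient objects, so $\D$ should match $\nu_{\alpha,\beta}$-semistability with $\nu_{\alpha,-\beta}$-semistability. As $\D$ carries the region $\beta>\mu(E)$ to $-\beta<\mu(\D E)$, I will concentrate on $\beta>\mu(E)$; the case $\beta<\mu(E)$, in which $\ch_0(E)\ge0$, requires a variant of the same argument (one still controls $\HH^{-1}(E)$ — reflexive when $\ch_0(E)>0$, and $\ch_0(E)=0$ forces $E$ to be a torsion sheaf).

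So assume $\beta>\mu(E)$. I would apply $\D$ to the canonical triangle $\HH^{-1}(E)[1]\to E\to\HH^0(E)\to\HH^{-1}(E)[2]$ and run the local-to-global spectral sequence $\lExt^p(\HH^{-q}(E),\OO)\Rightarrow\HH^{p+q}(\RlHom(E,\OO))$. The key inputs are: $\HH^{-1}(E)\in\FF_\beta$ is torsion free, and by Lemma \ref{lem:h_minus_one_reflexive} it is reflexive, so $\lExt^{\ge2}(\HH^{-1}(E),\OO)=0$ and $\lExt^1(\HH^{-1}(E),\OO)$ is zero-dimensional; and $\HH^0(E)\in\TT_\beta$ carries no torsion supported in dimension $\le1$ — this is where tilt-semistability of $E$ enters essentially — so its only possibly nonzero higher local $\Ext$ into $\OO$ is $\lExt^1(\HH^0(E),\OO)$, of dimension $\le1$. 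Combining these with $\codim\supp\lExt^p(-,\OO)\ge p$ and the spectral-sequence differentials, I expect to conclude that $\HH^i(\D(E))=0$ for $i\notin\{-1,0,1\}$, that $\HH^1(\D(E))=:T$ is zero-dimensional, and that $\HH^{-1}(\D(E))=\lHom(\HH^0(E),\OO)$ and $\HH^0(\D(E))$ satisfy the slope conditions defining $\FF_{-\beta}$ and $\TT_{-\beta}$.

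Granting that, set $\tilde E:=\tau^{\le0}\D(E)$; the canonical truncation triangle is then exactly $\tilde E\to\D(E)\to T[-1]\to\tilde E[1]$ with $T$ zero-dimensional, and $\tilde E\in\Coh^{-\beta}(\P^3)$. For semistability of $\tilde E$ I would argue by duality. Applying $\D$ to this triangle and rotating, and using that $\D(T[-1])$ is a zero-dimensional sheaf $T'$ sitting in degree $1$, one obtains a short exact sequence $0\to E\to\D(\tilde E)\to T'\to0$ in $\Coh^{\beta}(\P^3)$; in particular $\D(\tilde E)\in\Coh^{\beta}$ with $\mu(\D\tilde E)=\mu(E)\ne\beta$, and since $\nu_{\alpha,\beta}$ depends only on $\ch_{\le2}$ and $T'$ is zero-dimensional, every subobject $A\subset\D(\tilde E)$ has $\nu_{\alpha,\beta}(A)=\nu_{\alpha,\beta}(A\cap E)\le\nu_{\alpha,\beta}(E)=\nu_{\alpha,\beta}(\D\tilde E)$, so $\D(\tilde E)$ is $\nu_{\alpha,\beta}$-semistable. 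If $\tilde E$ failed to be $\nu_{\alpha,-\beta}$-semistable, dualizing its Harder--Narasimhan filtration through $\D$ and discarding the zero-dimensional artifacts of truncation would produce a quotient of $\D(\tilde E)$ of strictly smaller $\nu_{\alpha,\beta}$, contradicting the semistability just established.

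The step I expect to be the main obstacle is the cohomology computation in the second paragraph: proving that $\HH^0(E)$ carries no torsion in dimension $\le1$ for a tilt-semistable object at an arbitrary point $(\alpha,\beta)$ with $\beta\ne\mu(E)$ — equivalently that $\D(E)$ has vanishing cohomology in degree $2$ — and verifying that $\lHom(\HH^0(E),\OO)$ and $\HH^0(\D(E))$ do land in $\FF_{-\beta}$ and $\TT_{-\beta}$; likewise making the case $\beta<\mu(E)$ fully rigorous, since Lemma \ref{lem:h_minus_one_reflexive} is stated only for $\beta>\mu(E)$. Once the cohomology sheaves of $\D(E)$ are located, the definition of $\tilde E$ and the transfer of semistability are essentially formal, relying only on the fact that zero-dimensional sheaves are invisible to $\nu_{\alpha,\beta}$.
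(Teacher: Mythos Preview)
The paper does not give a proof of this proposition at all: it is simply quoted with a citation to \cite[Proposition 5.1.3]{BMT14:stability_threefolds}, so there is no ``paper's own proof'' to compare against. Your outline is essentially the argument of Bayer--Macr\`i--Toda in that reference: locate the cohomology sheaves of $\D(E)$ via local $\lExt$'s and the reflexivity of $\HH^{-1}(E)$, take $\tilde E$ to be the good truncation, and deduce semistability of $\tilde E$ from that of $E$ by the self-duality of $\D$ and the invisibility of zero-dimensional sheaves to $\nu_{\alpha,\beta}$.

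Two remarks on the points you flag as obstacles. First, the fact that $\HH^0(E)$ has no torsion supported in dimension $\le 1$ is immediate from tilt-semistability: such a torsion subsheaf $T$ would inject into $E$ in $\Coh^{\beta}(\P^3)$ with $\nu_{\alpha,\beta}(T)=+\infty$, destabilizing $E$ (the hypothesis $\beta\ne\mu(E)$ guarantees $\ch_1^{\beta}(E)>0$, so $\nu_{\alpha,\beta}(E)<+\infty$). Second, for $\beta<\mu(E)$ one does not need a separate reflexivity argument: apply what you have already proved to the tilt-semistable object $\D(E)$ at $(\alpha,-\beta)$, obtaining $\widetilde{\D(E)}\to\D(\D(E))\cong E\to T'[-1]$, and read this as the required triangle for $E$ after one more dualization (or, more directly, note that in BMT14 both cases are handled uniformly by working with the two-term complex description of $\Coh^{\beta}$). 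So your worries are not genuine gaps, and the sketch is on the right track.
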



\section{Some Chern character bounds}
\label{sec:some_bounds}

If $E$ is the ideal sheaf of a curve, then the following proposition is simply saying that its genus is bounded from above by the genus of a plane curve of the same degree.

\begin{prop}[{\cite[Proposition 3.2]{MS18:space_curves}}]
\label{prop:rank_one_old}
Let $E \in \Coh^{\beta}(\P^3)$ be $\nu_{\alpha, \beta}$-semistable. If either $H \cdot \ch(E) = (1,0,-d,e)$ or $H \cdot \ch(E) = (-1,0,d,e)$ holds, then
\[
e \leq \frac{d(d+1)}{2}.
\]
\end{prop}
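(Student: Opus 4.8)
The plan is to deduce the bound from the wall-and-chamber structure in tilt stability combined with the inequality of Theorem \ref{thm:p3_conjecture}. By replacing $E$ with $\D(E)$ if necessary (using Proposition \ref{prop:tilt_derived_dual}, which changes the sign of $\ch_1$ and $\ch_3$ and alters $\ch_3$ only by a zero-dimensional contribution that can only help), I may assume $H \cdot \ch(E) = (1,0,-d,e)$. Note $\Delta(E) = 0 - 2 \cdot 1 \cdot (-d) = 2d$, so $\Delta \geq 0$ and the wall structure of Theorem \ref{thm:Bertram} applies; moreover since $\ch_0 = 1$ the vertical wall is $\beta = 0$ and all other walls are nested semicircles with centers on the $\beta$-axis.

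First I would locate the relevant point in the $(\alpha,\beta)$-plane. The natural thing is to sit on the curve $\nu_{\alpha,\beta}(E) = 0$, i.e. the hyperbola through the apexes of all semicircular walls, and push $\beta$ toward $0^-$; equivalently one evaluates $Q_{\alpha,\beta}(E) \geq 0$ at a well-chosen $(\alpha,\beta)$. Since $\ch_1(E) = 0$, along the locus $\ch_1^\beta(E) = 0$ — that is $\beta = 0$ — the quantity $6\ch_1^\beta(E)\ch_3^\beta(E)$ vanishes, but $\beta = 0$ is itself a wall so one must be slightly careful; instead I would take the limit $\beta \to 0^-$ of the inequality $Q_{\alpha,\beta}(E)\geq 0$ evaluated on the hyperbola $\nu_{\alpha,\beta}(E)=0$. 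On that hyperbola $\alpha^2 = 2\ch_2^\beta(E)/\ch_0(E) \cdot(\text{stuff}) $; substituting, $Q$ becomes an explicit polynomial in $\beta$ whose nonnegativity as $\beta \to 0^-$ forces $e \leq d(d+1)/2$. Concretely: $\ch_1^\beta = -\beta$, $\ch_2^\beta = -d + \tfrac{\beta^2}{2}$, $\ch_3^\beta = e + d\beta - \tfrac{\beta^3}{6}$, $\Delta(E) = 2d$ is $\beta$-independent, and $\nu_{\alpha,\beta}(E)=0$ gives $\alpha^2 = 2\ch_2^\beta(E)/\ch_0 - (\ch_1^\beta)^2\cdot 0 \ldots$ — I would carry out this one-variable substitution and read off the leading behaviour as $\beta\to 0^-$.

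The main obstacle I anticipate is not the algebra but justifying that $E$ (or $\D(E)$) is actually $\nu_{\alpha,\beta}$-semistable at the point where I want to apply Theorem \ref{thm:p3_conjecture}: the hypothesis is that $E$ is $\nu_{\alpha,\beta}$-semistable \emph{for some} $(\alpha,\beta)$, and Theorem \ref{thm:p3_conjecture} then holds at that $(\alpha,\beta)$, but I want the inequality near $\beta = 0$. So I would argue as follows. If $E$ is semistable somewhere below the wall $\beta = 0$, then either it remains semistable all the way down to $\beta \to 0^-$ along the vertical line through its semistable chamber, or it is destabilized along a semicircular wall $W$; in the latter case Theorem \ref{thm:Bertram}(iv) bounds $\Delta(A) + \Delta(B) \leq \Delta(E) = 2d$ for the destabilizing sequence $0 \to A \to E \to B \to 0$, and since the Jordan-Hölder factors are themselves tilt-semistable objects of smaller discriminant one runs an induction, applying $Q_{\alpha,\beta}\geq 0$ to each factor on the wall and adding up — the arithmetic of $\ch_3$ is additive in short exact sequences, so the bound on $e$ is inherited. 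This reduces everything to the case where $E$ is semistable in a neighbourhood of the limit point, where the direct computation of the previous paragraph applies. Since this is precisely Proposition 3.2 of \cite{MS18:space_curves}, I would cite that source for the detailed wall analysis and only reproduce the key evaluation of $Q_{\alpha,\beta}$.
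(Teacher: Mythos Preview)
The paper does not prove this proposition at all: it is simply quoted from \cite{MS18:space_curves}. So there is no proof in the paper to compare against, and ultimately citing that reference is exactly what the paper does. That said, your sketch contains a concrete error worth flagging.

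Your proposed ``key evaluation'' of $Q_{\alpha,\beta}$ near $\beta\to 0^-$ does not give the bound. A direct computation with $\ch(E)=(1,0,-d,e)$ gives
\[
Q_{\alpha,\beta}(E)=2d(\alpha^2+\beta^2)+4d^2+6\beta e,
\]
so for $\beta<0$ the inequality $Q\ge 0$ reads $e\le \dfrac{d(\alpha^2+\beta^2)+2d^2}{3|\beta|}$, which blows up as $\beta\to 0^-$. Moreover the hyperbola $\nu_{\alpha,\beta}(E)=0$ is $\alpha^2=\beta^2-2d$, which lives entirely in the region $|\beta|\ge\sqrt{2d}$ and never approaches $\beta=0$; evaluating along it and letting $\beta\to -\sqrt{2d}$ gives $e\le \tfrac{2\sqrt{2}}{3}d^{3/2}$, not $\tfrac{d(d+1)}{2}$. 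In fact the bound $\tfrac{d(d+1)}{2}$ is attained by ideal sheaves of plane curves, and those objects \emph{are} destabilized (by the subobject $\OO(-1)$). So the wall-crossing is not a technical afterthought needed to justify where you may apply $Q$: it is the entire content of the argument, and the direct $Q$-evaluation is only the base case.

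Your induction sketch is the right shape but too loose as stated: the Jordan--H\"older factors along a wall need not have Chern character of the form $(\pm 1,0,\ast,\ast)$, so you cannot apply the inductive hypothesis to them without first controlling their ranks (e.g.\ via Lemma~\ref{lem:higherRankBound}) and then twisting to normalize $\ch_1$. The actual argument in \cite{MS18:space_curves} carries out exactly this analysis; since you end by deferring to that reference anyway, the honest thing is to drop the incorrect $\beta\to 0^-$ heuristic and simply cite the source, as the paper does.
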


For rank two sheaves we require the use of the Hirzebruch-Riemann Roch Theorem. Note that the Todd class of $\P^3$ is
\[
\td(T_{\P^3}) = \left(1,2,\frac{11}{6},1\right).
\]
If $S \subset \P^3$ is a smooth surface of degree $d$, then its Todd class is given by
\[
\td_S = \left(1, \left(2 - \frac{d}{2}\right)H, \frac{d^3}{6} - d^2 + \frac{11d}{6} \right).
\]

Bounds for the Chern characters of stable rank two sheaves are well known. This was first proved for stable reflexive sheaves in \cite[Theorem 1.1]{Har88:stable_reflexive_3}, and later generalized for all stable torsion free sheaves in \cite{OS85:spectrum_torsion_free_sheavesII}. We need the following version in tilt stability only for certain special cases.

\begin{prop}
\label{prop:rank_two_bounds}
For $\alpha > 0$, $\beta < \tfrac{c}{2}$, let $E \in \Coh^{\beta}(\P^3)$ be tilt-semistable with $\ch(E) = (2,c,d,e)$.
\begin{enumerate}
    \item If $c = 0$ and $d = 0$, then $e \leq 0$. If $e = 0$, then $E \cong \OO^{\oplus 2}$.
    \item If $c = 0$ and $d = -1$, then $e \leq 0$.
    \item If $c = 0$ and $d = -2$, then $e \leq 2$.
    \item If $c = -1$ and $d = -\tfrac{1}{2}$, then $e \leq \tfrac{5}{6}$. If $e = \tfrac{5}{6}$, then $E$ is destabilized in tilt-stability by an exact sequence
    \[
    0 \to \OO(-1)^{\oplus 3} \to E \to \OO(-2)[1] \to 0.
    \]
    \item If $c = -1$ and $d = -\tfrac{3}{2}$, then $e \leq \tfrac{17}{6}$.
\end{enumerate}
\end{prop}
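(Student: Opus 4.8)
The plan is to combine the Bogomolov--Gieseker type inequality $Q_{\alpha,\beta}(E) \ge 0$ of Theorem~\ref{thm:p3_conjecture} with the wall structure of Theorem~\ref{thm:Bertram}; all five cases run on the same mechanism and differ only in bookkeeping. Write $v = \ch_{\le 2}(E)$ and observe that, since $\ch_0(E) = 2$ and $\beta < \tfrac c2 = \mu(E)$, we have $\ch_1^{\beta}(E) = c - 2\beta > 0$. The first step is the routine expansion of $Q_{\alpha,\beta}(E)$ for the given $\ch_{\le 2}(E)$: in each of (i)--(v) it is an explicit polynomial in $\alpha,\beta$ of degree one in $e = \ch_3(E)$, with the coefficient of $e$ equal to $-6\,\ch_1^{\beta}(E)$, so that $Q_{\alpha,\beta}(E) \ge 0$ rearranges to an upper bound on $e$.

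\emph{Reduction to the $Q$-bound.} Let $\beta_0 < \tfrac c2$ be the value optimizing that bound (in cases (i)--(iii) an explicit negative number near the apex of $W_Q(E)$). If $E$ is tilt-semistable on the entire ray $\{(\alpha,\beta_0) : \alpha > 0\}$ --- equivalently, by local finiteness of walls, if $E$ is semistable below the lowest wall on that line --- then $Q_{\alpha,\beta_0}(E) \ge 0$ for small $\alpha$, and letting $\alpha \to 0^{+}$ removes the $\alpha^2\Delta(E)$ term to leave $4\,\ch_2^{\beta_0}(E)^2 \ge 6\,\ch_1^{\beta_0}(E)\,\ch_3^{\beta_0}(E)$. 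Using $\ch_1^{\beta_0}(E) > 0$, and in the higher cases the fact that $\chi(E) \in \Z$ pins $e$ to a fixed coset of $\Z$, this yields the claimed bound on $e$. In case (i) it applies unconditionally: $\Delta(E) = 0$, so Theorem~\ref{thm:Bertram}(v) leaves $\beta = 0$ as the only candidate wall, hence $E$ is semistable for all $\beta_0 < 0$ and $Q_{\alpha,\beta_0}(E) = 12\beta_0 e \ge 0$ forces $e \le 0$; when $e = 0$ the class $v = 2\,\ch(\OO)$ is that of a sum of line bundles, so Proposition~\ref{prop:line_bundles_uniquely_stable} gives $E \cong \OO^{\oplus 2}$.

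\emph{Wall analysis.} Otherwise moving toward the optimal line from the given $(\alpha_0,\beta_0)$ crosses an actual wall $W$, on which $E$ is strictly $\nu_{\alpha,\beta}$-semistable: there is a short exact sequence $0 \to A \to E \to B \to 0$ of $\nu_{\alpha,\beta}$-semistable objects with equal tilt-slope along $W$, and since Chern characters are additive, $e = \ch_3(A) + \ch_3(B)$. I would enumerate the finitely many numerically possible pairs $(\ch_{\le 2}(A),\ch_{\le 2}(B))$ from: additivity $\ch_{\le 2}(A) + \ch_{\le 2}(B) = v$; positivity of $\ch_1^{\beta}$ for $A$, $E$, $B$ on $W$ together with equality of their slopes, which constrains $\rho_W$ and $s_W$; the inequality $\Delta(A) + \Delta(B) \le \Delta(E)$ with $\Delta(A),\Delta(B) \ge 0$ from Theorem~\ref{thm:Bertram}(iv); integrality in $\Lambda$; and, when some piece has rank $\ge 3$, the radius estimate of Lemma~\ref{lem:higherRankBound}. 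For each surviving pair I would bound the $\ch_3$ of the pieces: a rank $\pm 1$ piece becomes after a twist by $\OO(k)$ (which preserves tilt-semistability) a class $(\pm 1, 0, \mp d', e')$, so Proposition~\ref{prop:rank_one_old} applies and untwisting returns the bound; a piece whose class is a positive multiple of $\ch(\OO(n))$ or a negative multiple of $\ch_{\le 2}(\OO(n))$ is identified by Proposition~\ref{prop:line_bundles_uniquely_stable} with $\OO(n)^{\oplus m}$ or $\OO(n)^{\oplus m}[1]$, so its $\ch_3$ is determined exactly --- this is precisely what yields the extremal sequence $0 \to \OO(-1)^{\oplus 3} \to E \to \OO(-2)[1] \to 0$ of case (iv); a torsion piece is either excluded because $\ch_1^{\beta}>0$ fails on $W$ for the relevant $\beta$, or dispatched by a direct estimate on the $\ch_3$ of a tilt-semistable torsion sheaf with the given support. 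Proposition~\ref{prop:tilt_derived_dual} is available to replace a piece in $\Coh^{-\beta}(\P^3)$ by one in $\Coh^{\beta}(\P^3)$ when dualizing is convenient. Summing the bounds over $A$ and $B$ gives $e \le$ (the stated value), and recording when each inequality is an equality isolates the extremal configurations, proving the equality clauses.

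\emph{Main obstacle.} The delicate part is the wall enumeration: producing a short, complete list of candidate destabilizers and, for each, a bound on the $\ch_3$ of the pieces sharp enough to land exactly on the claimed value. The point is that large-rank subobjects genuinely occur --- rank $3$ in case (iv) --- and cannot simply be discarded, so one must combine Lemma~\ref{lem:higherRankBound} to cap the size of such a wall with the rigidity of multiples of line bundles from Proposition~\ref{prop:line_bundles_uniquely_stable} to pin the class, and hence $\ch_3$, down exactly. It is this case-by-case bookkeeping, rather than any single idea, that constitutes the main work.
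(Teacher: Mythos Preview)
Your approach is sound but takes a genuinely different route from the paper. The paper does not bound $e$ via $Q_{\alpha,\beta}(E)\ge 0$ at all in cases (ii)--(v); instead it observes that each stated bound is equivalent to $\chi(E)\le 0$, and proves this by showing $\Hom(\OO,E)=0$ (immediate from slope comparison) and $\Ext^2(\OO,E)=\Hom(E,\OO(-4)[1])^\vee=0$ (by showing $E$ remains tilt-semistable below the numerical wall $W(E,\OO(-4)[1])$, so stability forbids the morphism). Thus the paper's ``target inequality'' is cohomological rather than numerical. Both approaches, however, rest on exactly the same wall analysis: one shows that no actual wall meets the line $\beta=-1$, either by a direct enumeration as in (ii)--(iii) or, in (iv)--(v), by the observation that $\ch_1^{-1}(E)=1$ is already minimal so no destabilizer can exist there. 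Your $Q$-argument is arguably cleaner in case (v), since the paper must additionally invoke Lemma~\ref{lem:higherRankBound} to exclude a rank-three destabilizer along $W(E,\OO(-4)[1])$, whereas you get $e\le 19/6$ from $Q_{0,-1}\ge 0$ and then $e\le 17/6$ from $\chi(E)\in\Z$ with no further work.

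Two cautions. First, your phrase ``let $\beta_0$ be the value optimizing that bound'' is misleading: the infimum of the $Q$-bound over $\beta_0$ can lie strictly below the true bound on $e$ (for instance in case (iv) one computes $Q_{0,-3/2}(E)\ge 0 \Rightarrow e\le 37/48 < 5/6$), precisely because the extremal $E$ is destabilized before reaching such a $\beta_0$. The correct choice is $\beta_0=-1$, dictated not by optimality of the $Q$-bound but by the minimality of $\ch_1^{-1}(E)$, which guarantees semistability along the whole ray; your ``Wall analysis'' branch is then never entered. Second, the equality clause in (iv)---that $E$ sits in the specific exact sequence with $\OO(-1)^{\oplus 3}$ and $\OO(-2)[1]$---does not fall out of the $Q$-bound alone and is outsourced in the paper to \cite[Theorem~5.1]{Sch15:stability_threefolds}; your sketch of recovering it via Proposition~\ref{prop:line_bundles_uniquely_stable} is plausible but would require you to actually identify the largest wall for $E$ and argue that the Jordan--H\"older factors there have the claimed classes, which is more than ``recording when each inequality is an equality.''
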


\begin{proof}
A straightforward computation confirms that, except in the case $c = d = 0$, the bounds on $e$ are equivalent to $\chi(E) \leq 0$. In these cases, we can compute that under our restriction on $\beta$ the tilt-slope of $\OO$ is larger than the tilt-slope of $E$, and $\Hom(\OO, E) = 0$ follows. Note that $\Ext^2(\OO, E) = \Hom(E, \OO(-4)[1])$. We will show that $E$ is semistable below $W(E, \OO(-4)[1])$ to conclude.

If $c = -1$, then $\ch^{-1}_1(E) = 1$. This is the minimal positive value it can obtain, and since $\beta = -1$ is not the vertical wall, $E$ must be stable or unstable independently of $\alpha$ when $\beta = -1$. However, for $d = -\tfrac{1}{2}$ one can check that $\alpha = 0$, $\beta = -1$ describes a point inside or on $W(E, \OO(-4)[1])$. For $d = -\tfrac{3}{2}$ the wall $W(E, \OO(-4)[1])$ intersects the $\beta$-axis at $\beta = -4$ and $\beta = -1$. We are done unless $E$ is destabilized by a morphism $E \onto \OO(-4)[1]$. This means the semistable subobject has rank three. However,
\[
\frac{\Delta(E)}{4 \cdot 3} = \frac{7}{12} < \frac{9}{4} = \rho^2(E, \OO(-4)[1]),
\]
and we get a contradiction to Lemma \ref{lem:higherRankBound}. The exact sequence in case $\ch(E) = (2,-1,-\tfrac{1}{2}, \tfrac{5}{6})$ is a special case of \cite[Theorem 5.1]{Sch15:stability_threefolds}.
\begin{enumerate}
    \item Let $c = 0$ and $d = 0$. Note that $Q_{\alpha, \beta}(E) \geq 0$ is equivalent to $e \leq 0$ for arbitrary $\alpha > 0$. By Proposition \ref{prop:line_bundles_uniquely_stable} we get $E \cong \OO^{\oplus 2}$ in case $e = 0$.
    \item If $c = 0$ and $d = -1$, then $\nu_{0, \pm 1}(E) = 0$. If $E$ is destabilized along some semicircular wall, it happens along $\beta = -1$. Let $0 \to F \to E \to G \to 0$ induce such a semicircular wall containg a point $(\alpha, -1)$. Since $\ch^{-1}(E) = (2,2,0,e-\tfrac{2}{3})$, we must have $\ch_1^{-1}(F) = 1$. Write $\ch^{-1}_{\leq 2}(F) = (r, 1, x)$, where $r$ is an integer, and $x \in \tfrac{1}{2} + \Z$. If $r \leq 0$, we replace $F$ by $G$ in the following argument, and without loss of generality $r > 0$. We have
    \[
    -\frac{\alpha^2}{2} = \nu_{\alpha, -1}(E) = \nu_{\alpha, -1}(F) = x - \frac{\alpha^2}{2}r.
    \]
    This equation is equivalent to $(r - 1)\alpha^2 = 2x$. Since $x \in \tfrac{1}{2} + \Z$, we must have $r \geq 2$. Then together with $\Delta(F) \geq 0$ we get $0 < x \leq \tfrac{1}{2r} \leq \frac{1}{4}$, a contradiction.
    \item If $c = 0$ and $d = -2$, then $\ch^{-1}(E) = (2,2,-1, e - \tfrac{5}{3})$ holds. If there is a subobject $F \subset E$ that destablizes $E$ along $\beta = -1$, then $\ch^{-1}_1(F) = 1$.
    Let $\ch_{\leq 2}^{-1}(F) = (r,1,x)$. If we had $r = 1$, then a direct computation shows that $W(F, E)$ would be the vertical wall which is located at $\beta = 0$. Thus, $r \geq 2$ and the wall is given by the equation
    \[
    \alpha^2 = \frac{2x + 1}{r-1}.
    \]
    Hence, $x > -\tfrac{1}{2}$, and $\Delta(F) \geq 0$ implies $x \leq \tfrac{1}{2r} \leq \tfrac{1}{4}$. This is a contradiction to $x \in \tfrac{1}{2} + \Z$. We showed that no wall intersects $\beta = -1$. A straightforward computation shows that $\beta = -1$, $\alpha = 0$ describes a point inside $W(E, \OO(-4)[1])$, and $E$ has to be semistable at some point inside $W(E, \OO(-4)[1])$. \qedhere
\end{enumerate}
\end{proof}

An analogous statement for objects with rank $-2$ can be deduced.

\begin{cor}
\label{cor:rank_minustwo_bounds}
For $\alpha > 0$, $\beta > \tfrac{c}{2}$, let $E \in \Coh^{\beta}(\P^3)$ be $\nu_{\alpha, \beta}$-semistable with $\ch(E) = (-2,c,d,e)$.
\begin{enumerate}
    \item If $c = 0$ and $d = 0$, then $e \leq 0$.
    \item If $c = 0$ and $d = 1$, then $e \leq 0$.
    \item If $c = 0$ and $d = 2$, then $e \leq 2$.
    \item If $c = -1$ and $d = \tfrac{1}{2}$, then $e \leq \tfrac{5}{6}$.
    \item If $c = -1$ and $d = \tfrac{3}{2}$, then $e \leq \tfrac{17}{6}$.
\end{enumerate}
\end{cor}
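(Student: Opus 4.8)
The plan is to deduce this from Proposition~\ref{prop:rank_two_bounds} by applying the derived dualizing functor $\D(-)=\RlHom(-,\OO)[1]$. Let $E\in\Coh^{\beta}(\P^3)$ be $\nu_{\alpha,\beta}$-semistable with $\ch(E)=(-2,c,d,e)$ and $\beta>\tfrac c2$, and assume first that $\beta\neq\mu(E)$. By Proposition~\ref{prop:tilt_derived_dual} there is a distinguished triangle
\[
\tilde E\to\D(E)\to T[-1]\to\tilde E[1]
\]
with $\tilde E\in\Coh^{-\beta}(\P^3)$ a $\nu_{\alpha,-\beta}$-semistable object and $T$ a zero-dimensional sheaf, say of length $\ell\geq 0$.

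The next step is bookkeeping with Chern characters. The functor $\D$ multiplies the degree-$i$ part of $\ch$ by $(-1)^{i+1}$, so $\ch(\D(E))=(2,c,-d,e)$; since $\ch(T[-1])=(0,0,0,-\ell)$, the triangle gives
\[
\ch(\tilde E)=\ch(\D(E))-\ch(T[-1])=(2,c,-d,e+\ell).
\]
Running through the five cases, $(2,c,-d,e+\ell)$ is exactly the Chern vector of the correspondingly numbered case of Proposition~\ref{prop:rank_two_bounds}: the substitution $d\mapsto -d$ matches (i) with (i), \ldots, (v) with (v) and preserves the bound $B\in\{0,0,2,\tfrac56,\tfrac{17}{6}\}$. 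Applying that proposition to $\tilde E$ gives $e+\ell\leq B$, and since $\ell\geq 0$ we conclude $e\leq B$.

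The step I expect to be the main obstacle is matching the parameter ranges. Proposition~\ref{prop:rank_two_bounds} requires $\tilde E$ to be semistable for a parameter $(\alpha',\beta')$ with $\beta'<\tfrac c2$; here $\beta'=-\beta$, so one needs $-\beta<\tfrac c2$, i.e.\ $\beta>-\tfrac c2=\mu(E)$, which is a priori stronger than $\beta>\tfrac c2$. For $c=0$ the two conditions coincide and there is nothing to do. For $c=-1$ a short computation using only $\HH^{-1}(E)\in\FF_{\beta}$ and $\HH^{0}(E)\in\TT_{\beta}$ shows that no object of $\Coh^{\beta}(\P^3)$ has Chern character $(-2,-1,d,e)$ when $\beta\in(-\tfrac12,\tfrac12)$, so the hypotheses already force $\beta\geq\tfrac12=\mu(E)$, and for $\beta>\mu(E)$ the argument above applies verbatim. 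The single boundary value $\beta=\mu(E)$ (which occurs only when $c=-1$, at the vertical wall) requires a separate routine argument: there $E$ is built via extensions from a shift $F[1]$ of a slope-semistable rank-two sheaf $F$ with odd first Chern character, hence $F$ is automatically slope-stable, thus $2$-GS-semistable, thus $\nu_{\alpha',\beta'}$-semistable for $\alpha'\gg 0$ and $\beta'<\tfrac c2$, and Proposition~\ref{prop:rank_two_bounds} applies to a dual of $F$. Apart from this parameter-range bookkeeping, the argument is purely formal.
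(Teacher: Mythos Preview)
Your approach is essentially identical to the paper's: apply Proposition~\ref{prop:tilt_derived_dual} to obtain a $\nu_{\alpha,-\beta}$-semistable $\tilde E$ with $\ch(\tilde E)=(2,c,-d,e+\ell)$, then invoke Proposition~\ref{prop:rank_two_bounds} and use $\ell\geq 0$. The paper's proof consists of precisely these two steps stated in two sentences and omits all the parameter-range bookkeeping you supply (in particular the observation that $E\in\Coh^{\beta}$ with $c=-1$ already forces $\beta\geq\tfrac12$, and the boundary case $\beta=\mu(E)$), so your version is more careful than the original.
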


\begin{proof}
By Proposition \ref{prop:tilt_derived_dual}, there is a distinguished triangle 
\[
\tilde{E} \to \D(E) \to T[-1] \to \tilde{E}[1],
\]
where $\tilde{E}$ is $\nu_{\alpha, -\beta}$-semistable, and $T$ is a zero-dimensional sheaf. Proposition \ref{prop:rank_two_bounds} applies to $\tilde{E}$ and the result follows immediately.
\end{proof}

\section{Walls for sheaves supported on surfaces}
\label{sec:walls}

Let $i: S \into \P^3$ be the embedding of a degree $c$ integral surface. We fix the hyperplane section $H$ as a polarization on $X$. Note that for smooth $S$ an object $E \in \Coh(S)$ is slope-(semi)stable if and only if $i_*E$ is $2$-GS-(semi)stable. Therefore, if $S$ is singular, we simply define slope stability in this way. The goal of this section is to prove the following proposition and its corollary.

\begin{prop}
\label{prop:rank_two_destabilizes_rank_two}
Let $E \in \Coh(S)$ be a slope-semistable sheaf with $\ch(i_*E) = (0, 2c, d, e)$. Assume that $i_* E$ is destabilized by a short exact sequence $0 \to F \to i_* E \to G \to 0$, where $F$ has rank one. 
Then the wall $W(i_* E, F)$ has to be smaller than or equal to the numerical wall $W(i_* E, \OO(\ch_1(F) - c)) = W(i_* E, \OO(-\ch_1(G) + c))$. 
\end{prop}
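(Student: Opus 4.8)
The plan is to recast the conclusion as a Chern character inequality and then extract it from the cohomology sheaves of the destabilizing sequence together with the $2$-GS-semistability of $i_*E$. Since $\ch_0(i_*E) = 0$, Theorem \ref{thm:Bertram}(i) shows that every numerical wall $W(i_*E,-)$ is a semicircle centered on the $\beta$-axis at $\beta = \ch_2(i_*E)/\ch_1(i_*E)$; hence the two walls in the statement are concentric, and ``smaller than or equal to'' is a comparison of radii. Writing $f := \ch_1(F)$ and $d := \ch_2(i_*E)$, one computes from $\nu_{\alpha,\beta}(i_*E) = \nu_{\alpha,\beta}(w)$ that a class $w$ of rank one gives a wall of squared radius $\tfrac{1}{4c^2}\bigl(d^2 - 4cd\,\ch_1(w) + 8c^2\,\ch_2(w)\bigr)$; comparing the values at $w = \ch_{\leq 2}(F)$ and $w = \ch_{\leq 2}(\OO(f-c))$ shows that the assertion of the proposition is equivalent to
\begin{equation*}
2\,\ch_2(F) \leq (f-c)^2 + d. \tag{$\star$}
\end{equation*}

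To prove $(\star)$, take cohomology sheaves of $0 \to F \to i_*E \to G \to 0$ for the standard $t$-structure. As $i_*E$ is a sheaf, $\HH^{-1}(F) = 0$, so $F$ is a sheaf and there is an exact sequence of sheaves $0 \to K \to F \to i_*E \to \HH^0(G) \to 0$ with $K := \HH^{-1}(G) \in \FF_\beta$; thus $K$ is torsion-free, and since its image in $i_*E$ vanishes while $\ch_0(i_*E) = 0$ we get $\ch_0(K) = \ch_0(F) = 1$, so $K$ is torsion-free of rank one. Put $Q := F/K$, the image of the monomorphism $F \to i_*E$ in $\Coh^\beta$, a nonzero subsheaf of $i_*E$. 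Since $S$ is integral and $i_*E$ is an honest coherent sheaf on $S$ of generic rank two, both $Q$ and the torsion subsheaf $T \subseteq F$ (which is disjoint from the torsion-free $K$, hence maps into $i_*E$) are pushed forward from subsheaves $Q_S, T_S \subseteq E$; let $r \in \{0,1,2\}$ denote the generic rank of $T_S$ on $S$, so $F/T$ is torsion-free of rank one with $\ch_1(F/T) = f - rc$, and hence $\ch_2(F/T) \leq \tfrac12(f-rc)^2$ since $\Delta(F/T) \geq 0$.

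Now distinguish cases on $r$. If $F$ is torsion-free ($r = 0$): localizing the injection $K \hookrightarrow F$ of torsion-free rank-one sheaves at the generic point of $S$ gives an injection of free rank-one modules over a discrete valuation ring, so $Q$ has cyclic torsion stalk there, killed by a uniformizer because $Q \subseteq i_*E$; hence $Q_S$ has generic rank $\leq 1$, and rank $0$ is impossible as $Q \neq 0$. So $Q_S \subseteq E$ has generic rank one, forcing $\ch_1(K) = f - c$, and $2$-GS-semistability of $i_*E$ gives $2\,\ch_2(Q) \leq d$; together with $\ch_2(K) \leq \tfrac12(f-c)^2$ from $\Delta(K) \geq 0$ this yields $2\,\ch_2(F) = 2\,\ch_2(K) + 2\,\ch_2(Q) \leq (f-c)^2 + d$, which is $(\star)$. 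If $r = 1$: then $\ch_1(F/T) = f - c$, and $2$-GS-semistability of $i_*E$ applied to $T$ gives $2\,\ch_2(T) \leq d$, so $2\,\ch_2(F) = 2\,\ch_2(T) + 2\,\ch_2(F/T) \leq (f-c)^2 + d$, again $(\star)$.

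The remaining case $r = 2$ is the crux, and it is here that one uses the assumption that the sequence induces an actual wall. Then $F$ surjects onto the rank-one torsion-free sheaf $F/T$, which is slope-semistable of slope $f - 2c$, so the minimal Harder--Narasimhan slope of $F$ equals $f - 2c$. Since $F$ must lie in $\Coh^\beta$ --- hence, being a sheaf, in $\TT_\beta$, i.e. $\beta$ is strictly less than the minimal Harder--Narasimhan slope of $F$ --- at every point of the semicircular wall $W(i_*E, F)$, and this wall has center $\beta = d/(2c)$, we obtain $d/(2c) + \rho(W(i_*E, F)) \leq f - 2c$, and in particular $f > d/(2c) + 2c$. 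On the other hand, $T_S \subseteq E$ forces $2\,\ch_2(T) \leq d$ and $\Delta(F/T) \geq 0$ forces $\ch_2(F/T) \leq \tfrac12(f-2c)^2$, so $2\,\ch_2(F) \leq 2d + (f-2c)^2$; but $f > d/(2c) + 2c$ rearranges to $d < 2cf - 3c^2 = (f-c)^2 - (f-2c)^2$, so $2d + (f-2c)^2 < d + (f-c)^2$, and $(\star)$ follows once more. The main obstacle is precisely this case $r = 2$: the torsion of $F$ may have full rank along $S$, so $2$-GS-semistability no longer bounds $\ch_2(F)$ tightly enough, and one must instead observe that a full-rank torsion quotient confines the destabilizing wall to the region $\beta < f - 2c$ and thereby forces $f$ to be large. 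The other cases are routine once the cohomology sheaves have been disentangled.
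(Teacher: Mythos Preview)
Your proof is correct and takes a somewhat different route from the paper's. Both arguments case-split on which piece of the destabilizing data carries a rank-one sheaf on $S$, and both ultimately rest on Bogomolov for a torsion-free rank-one factor. The paper, however, packages this via the auxiliary wall-comparison Lemmas~\ref{lem:subobject_has_torsion_on_S} and~\ref{lem:quotient_has_torsion_on_S}: it bounds $W(i_*E,F)$ by first bounding where $F$ (resp.\ $G$) can be tilt-semistable, using that the torsion sub $T\hookrightarrow F$ (resp.\ the quotient $G\twoheadrightarrow \HH^0(G)$) destabilizes for large $\alpha$. You instead reformulate the conclusion as the single Chern-character inequality $(\star)$ and verify it directly from $2$-GS-semistability of $i_*E$ applied to the rank-one subsheaf $Q_S$ or $T_S$ of $E$, together with $\Delta$ of the complementary rank-one torsion-free piece. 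This bypasses the wall geometry of Lemmas~\ref{lem:subobject_has_torsion_on_S}--\ref{lem:quotient_has_torsion_on_S} entirely and is arguably more elementary; the paper's approach, on the other hand, makes the geometric meaning (``$F$ cannot be semistable outside $W(F,\OO(f-c))$'') more transparent.

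A few minor points. In the $r=0$ branch, ``rank $0$ is impossible as $Q\neq 0$'' silently uses that $E$ is torsion-free on $S$ (so a rank-zero subsheaf of $E$ must vanish); this follows from slope-semistability of a positive-rank sheaf under the paper's conventions, but it deserves a word. More interestingly, your $r=2$ case is in fact vacuous: since $T\hookrightarrow Q$ one gets $\mathrm{rank}\,Q_S=2$ and hence $\ch_1(K)=f-2c$, but $K=\HH^{-1}(G)\in\FF_\beta$ forces $f-2c\leq\beta$ while $F\in\TT_\beta$ with minimal HN slope $f-2c$ forces $\beta<f-2c$. This also accounts for the small inconsistencies in that paragraph: the bound ``$2\,\ch_2(T)\leq d$'' is not available when $T_S$ has full rank ($2$-GS-semistability says nothing there); what one actually has is $\ch_2(T)\leq d$ from $\ch_2(i_*(E/T_S))\geq 0$, which is exactly what your next displayed inequality $2\,\ch_2(F)\leq 2d+(f-2c)^2$ uses. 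Likewise $f>d/(2c)+2c$ rearranges to $d<2cf-4c^2$, not $2cf-3c^2$; since the former is smaller, your conclusion $2d+(f-2c)^2<d+(f-c)^2$ still follows. With these cosmetic fixes the argument stands.
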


\begin{cor}
\label{cor:rank_two_destabilizes_rank_two}
Let $E \in \Coh(S)$ be a slope-semistable sheaf with $\ch(i_*E) = (0, 2c, d, e)$. If $i_{*}E$ is destabilized in tilt stability by a semicircular wall of radius $\rho$, then
\[
\rho \leq \frac{c}{2}.
\]
\end{cor}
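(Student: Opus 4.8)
The plan is to deduce this as a direct consequence of Proposition \ref{prop:rank_two_destabilizes_rank_two} together with the bound on higher-rank walls in Lemma \ref{lem:higherRankBound}, or alternatively via a direct radius computation for the comparison wall appearing in the proposition. Suppose $i_*E$ is destabilized along a semicircular wall $W$ of radius $\rho$, induced by a short exact sequence $0 \to F \to i_*E \to G \to 0$ in $\Coh^\beta(\P^3)$. First I would reduce to the case where $F$ has rank one: since $\ch_0(i_*E) = 0$, the ranks of $F$ and $G$ are equal and opposite, say $\rk F = r > 0$ and $\rk G = -r$ (the case $r=0$ gives the vertical wall, not a semicircle, so it is excluded). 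If $r \geq 2$, then $\ch_0(F) = r > 0 = \ch_0(i_*E)$, so Lemma \ref{lem:higherRankBound} applies and gives $\rho^2 \leq \Delta(i_*E)/(4 H^n \ch_0(F) \cdot H^n \ch_0(F))$; since $\Delta(i_*E) = \Delta$ is fixed by the Chern character $(0,2c,d,e)$ — namely $\Delta(i_*E) = (2c)^2 - 0 = 4c^2$ — one computes $\rho^2 \leq 4c^2/(4r^2) = c^2/r^2 \leq c^2/4$, which is even stronger than needed. So the only remaining case is $r = 1$.

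For $r = 1$, Proposition \ref{prop:rank_two_destabilizes_rank_two} tells us that $W = W(i_*E, F)$ is smaller than or equal to the numerical wall $W(i_*E, \OO(\ell))$ where $\ell = \ch_1(F) - c$ (an integer, since $\ch_1(F)$ is an integer multiple of $H$ and $c$ is the degree of $S$). Here "smaller than or equal to" is in the sense of nested semicircles: by Theorem \ref{thm:Bertram}(iii) distinct numerical walls for $i_*E$ are disjoint, so they are totally ordered by inclusion of the enclosed disks, and the radius of $W$ is at most the radius of $W(i_*E, \OO(\ell))$. Thus it suffices to show that every numerical wall of the form $W(i_*E, \OO(\ell))$ has radius at most $c/2$. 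I would compute this radius directly: using the formula for the center and radius of the wall $W(v,w)$ in terms of $v = H\cdot\ch_{\leq 2}(i_*E) = (0, 2c, d)$ and $w = H\cdot\ch_{\leq 2}(\OO(\ell)) = (1, \ell, \ell^2/2)$, one finds that the numerical wall is a semicircle centered on the $\beta$-axis whose radius squared is a quantity of the shape $(\text{something})/(2c)^2$, and a short manipulation shows $\rho(i_*E, \OO(\ell))^2 = \tfrac{1}{4}(\text{a quadratic in }\beta_{\mathrm{center}})$ that is maximized — over the relevant range where it actually is a wall — at exactly $c^2/4$. (Concretely, since $v_0 = 0$, Theorem \ref{thm:Bertram}(i) says the wall is a semicircle with center $\beta = v_2/v_1 = d/(2c)$, independent of $w$; plugging $(\alpha,\beta)$ on this semicircle into $\nu_{\alpha,\beta}(v) = \nu_{\alpha,\beta}(w)$ and solving for $\alpha^2$ in terms of the radius gives the bound.)

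The main obstacle I anticipate is bookkeeping the comparison-wall inequality correctly: one must make sure that Proposition \ref{prop:rank_two_destabilizes_rank_two} is genuinely comparing $W$ to a \emph{numerical} wall associated to a line bundle class (not to an actual destabilizing object), and that "smaller than or equal to" is used with the correct orientation of the nesting — a wall lying inside another has the smaller radius. Once that is pinned down, the remaining content is the elementary computation that the line-bundle numerical walls for a class of type $(0, 2c, d)$ have radius bounded by $c/2$, together with checking that the $r \geq 2$ case really is subsumed by Lemma \ref{lem:higherRankBound} with the value $\Delta(i_*E) = 4c^2$ (this uses $\ch_2$ of $i_*E$, which enters $\Delta$ with coefficient $0$ because $\ch_0(i_*E) = 0$). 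No step should require more than a page; the corollary is essentially a packaging of the proposition.
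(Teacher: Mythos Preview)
Your reduction to the rank-one case is fine: since $i_*E$ is a sheaf, any subobject $F$ in $\Coh^\beta$ is also a sheaf, so $\ch_0(F)\geq 0$; the case $\ch_0(F)=0$ gives no semicircular wall (both slopes are $\ch_2/\ch_1-\beta$, so the equation is either vacuous or never satisfied---not a vertical wall, but the conclusion is the same); and for $\ch_0(F)\geq 2$ the bound $\rho^2\leq \Delta(i_*E)/(4r^2)=c^2/r^2\leq c^2/4$ via Lemma~\ref{lem:higherRankBound} is correct.

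The gap is in the rank-one step. Your sentence ``it suffices to show that every numerical wall of the form $W(i_*E,\OO(\ell))$ has radius at most $c/2$'' is false as written: the radius of $W(i_*E,\OO(\ell))$ is exactly $\lvert \ell - d/(2c)\rvert$, which is unbounded in $\ell$. The hedge ``over the relevant range where it actually is a wall'' does not rescue this, because you never say what that range is or how it is obtained. There is no universal bound on the line-bundle walls; the bound on $\ell=\ch_1(F)-c$ must come from the geometry of the \emph{specific} wall $W=W(i_*E,F)$ you started with.

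What is missing is precisely the constraint the paper uses. Write $x=\ch_1(F)$ and $s=d/(2c)$ for the common center. Along the wall $W$ one has $0<\ch_1^\beta(F)<\ch_1^\beta(i_*E)=2c$ for every $\beta$ in the interval $(s-\rho,s+\rho)$; taking $\beta\to s+\rho$ and $\beta\to s-\rho$ gives $s+\rho\leq x\leq 2c+s-\rho$, hence $\lvert x-c-s\rvert\leq c-\rho$. Combined with Proposition~\ref{prop:rank_two_destabilizes_rank_two}, which gives $\rho\leq \rho(i_*E,\OO(x-c))=\lvert x-c-s\rvert$, you obtain $\rho\leq c-\rho$, i.e.\ $\rho\leq c/2$. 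The paper runs this as a contradiction (assume $\rho>c/2$, evaluate at $\beta=s\pm c/2$, and deduce $\lvert x-c-s\rvert<c/2$), which is the same mechanism. Either way, the step you are missing is the $\ch_1^\beta$-inequality that pins down $x$; without it the comparison to the line-bundle wall gives you nothing.
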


\begin{proof}
Assume for a contradiction that $\rho(i_* E, F)$ is strictly larger than $\tfrac{c}{2}$. By Lemma \ref{lem:higherRankBound} this implies that the destabilizing subobject $F$ has rank one.
It is easy to compute that $s(i_* E, F) = \tfrac{d}{2c}$.
Then the wall contains points $(\alpha_0,\beta_0)$, $(\alpha_1, \beta_1)$, where $\beta_0 = \tfrac{d}{2c} - \tfrac{c}{2}$ and $\beta_1 = \tfrac{d}{2c} + \tfrac{c}{2}$. In particular,
\begin{align*}
0 < \ch_1^{\beta_1}(F) &= x - (\frac{d}{2c} + \frac{c}{2}), \\
2c = \ch_1^{\beta_0}(i_*E) &> \ch_1^{\beta_0}(F) = x - (\frac{d}{2c} - \frac{c}{2}).
\end{align*}
Therefore, $\tfrac{d}{2c} + \tfrac{c}{2} < x < \tfrac{d}{2c} + \tfrac{3c}{2}$. The radius of $W(i_* E, \OO(x - c))$ is given by
\[
\rho(i_* E, \OO(x - c))^2 = \left(x - c - \frac{d}{2c}\right)^2 \geq \rho(i_* E, F)^2.
\]
We get the contradiction $\rho(i_* E, F)^2 < \tfrac{c^2}{4}$.
\end{proof}

\begin{lem}
\label{lem:sub_or_quot_torsion_onS}
Under the hypothesis of Proposition \ref{prop:rank_two_destabilizes_rank_two} either the torsion part of the sheaf $F$ is supported on $S$ where it is a rank one torsion free sheaf, or $G$ is a two term complex, where $\HH^{-1}(G)$ is a line bundle, and $\HH^0(G)$ is supported on $S$ where it has rank one.
\end{lem}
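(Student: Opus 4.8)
The plan is to analyse the sheaves appearing in the destabilizing sequence by passing from $\Coh^{\beta}(\P^3)$ to ordinary cohomology. First I would take the long exact sequence attached to the triangle $F \to i_*E \to G \to F[1]$ in $\Db(\P^3)$. Since $i_*E$ is a sheaf, $\HH^{-1}(F) = 0$, so $F$ is a coherent sheaf, of rank one by hypothesis, and there is an exact sequence of sheaves
\[
0 \to \HH^{-1}(G) \to F \to i_*E \to \HH^0(G) \to 0 .
\]
Here $\HH^0(G)$, as a quotient of the torsion sheaf $i_*E$, is torsion, and comparing ranks forces $\ch_0(\HH^{-1}(G)) = 1$. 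As $\HH^{-1}(G) \in \FF_{\beta}$ it is torsion free of rank one. After replacing the destabilizing sequence by its Jordan--Hölder factors we may assume $F$, $i_*E$, $G$ are tilt-semistable of the same tilt-slope along the wall, and then Lemma \ref{lem:h_minus_one_reflexive} applies to $G$ (the hypothesis $\mu(G) < \beta$ holds since $G$ has finite tilt-slope, so $H^2\cdot\ch_1^{\beta}(G) > 0$), so $\HH^{-1}(G)$ is reflexive. A reflexive rank one sheaf on the smooth threefold $\P^3$ is a line bundle, hence $\HH^{-1}(G) \cong \OO(aH)$ for some $a \in \Z$; this already gives the line bundle assertion in the second alternative.

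Next I would push everything down to $S$. Put $Q := \im(F \to i_*E)$, so that $0 \to \OO(aH) \to F \to Q \to 0$ and $0 \to Q \to i_*E \to \HH^0(G) \to 0$. Every subsheaf and every quotient of $i_*E$ is annihilated by the ideal sheaf of $S$, hence is a pushforward from $S$; thus $Q = i_*Q'$ and $\HH^0(G) = i_*E''$, with $0 \to Q' \to E \to E'' \to 0$ on $S$, and $E$ is torsion free of rank two because slope-semistable sheaves on a surface are torsion free. Let $T(F) \subseteq F$ be the torsion subsheaf; since $\OO(aH)$ is torsion free, $T(F) \cap \OO(aH) = 0$, so $T(F)$ embeds into $i_*E$ and hence $T(F) = i_*T'$ with $T' \subseteq E$ torsion free, of some rank $t \geq 0$ on $S$. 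Write $\bar F := F/T(F)$; it is torsion free of rank one, so $\bar F \cong \OO(bH)\otimes\II_W$ with $W$ of codimension $\geq 2$ and $b \in \Z$. With $r'' := \rk(E'')$, comparing $H^2\cdot\ch_1$ in $\ch_1(F) + \ch_1(G) = \ch_1(i_*E)$ gives
\[
b - a = (2 - t - r'')\,c .
\]

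Then I would extract the dichotomy. The composite $\OO(aH) \hookrightarrow F \twoheadrightarrow \bar F$ is injective, which forces $b \geq a$; and $b = a$ is impossible, since then $W = \varnothing$ and $\OO(aH) \xrightarrow{\sim} \bar F$, so the inclusion splits, $F \cong \OO(aH) \oplus T(F)$, and membership in $\Coh^{\beta}(\P^3)$ then forces $a > \beta$ for $F$ but $a \leq \beta$ for $G$, a contradiction. Hence $b - a \geq c \geq 1$ and $t + r'' \leq 1$. Finally, if $t = 0$ then $T(F)$ is supported in dimension $\leq 1$, so at the generic point $\eta$ of $S$ the sheaf $F_{\eta} = \bar F_{\eta}$ is free of rank one over the discrete valuation ring $\OO_{\P^3,\eta}$, and its image in $(i_*E)_{\eta} \cong E \otimes_{\OO_S} k(S) \cong k(S)^{\oplus 2}$ has $k(S)$-dimension $\leq 1$ (it factors through $F_{\eta}/\mathfrak m F_{\eta} \cong k(S)$); thus $\rk(Q') \leq 1$ and $r'' = 2 - \rk(Q') = 1$. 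Therefore either $t = 1$, $r'' = 0$, in which case the torsion part of $F$ is $i_*$ of the rank one torsion free sheaf $T'$ on $S$, or $t = 0$, $r'' = 1$, in which case $E'' \neq 0$ has rank one on $S$ and $G$ is a genuine two-term complex with $\HH^{-1}(G) = \OO(aH)$ a line bundle and $\HH^0(G) = i_*E''$ of rank one on $S$; these are exactly the two alternatives.

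The step I expect to be the main obstacle is ruling out the degenerate case $t = r'' = 0$; the localization at the generic point of $S$ above is designed precisely for this, and it is the one place where the concrete embedding $S \subset \P^3$ enters rather than just the formal machinery of tilt stability. A secondary point requiring care is that the destabilizing sequence lies in $\Coh^{\beta}(\P^3)$ for the value of $\beta$ witnessing the destabilization, so that the "$b = a$ is impossible" argument is legitimate; this is automatic once $F$, $i_*E$, $G$ are taken tilt-semistable of the same tilt-slope along the wall.
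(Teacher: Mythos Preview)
Your proof is correct and takes a different, more unified route than the paper's. Both begin with the long exact cohomology sequence and identify $\HH^{-1}(G)$ as a line bundle $\OO(a)$ via Lemma~\ref{lem:h_minus_one_reflexive}. From there the paper proceeds by separate case analysis: it rules out the case where both the torsion of $F$ and $\HH^0(G)$ are supported in dimension $\leq 1$ by observing that the quotient of a twisted ideal sheaf by a line bundle has rank one on its two-dimensional support, and then argues ad hoc that neither the torsion of $F$ nor $\HH^0(G)$ can have rank two on $S$. Your approach instead packages the numerics into the single identity $b - a = (2 - t - r'')c$, and your splitting argument (using $\TT_{\beta} \cap \FF_{\beta} = 0$) showing $b > a$ dispatches all the rank-two possibilities at once; your localization at the generic point of $S$ then rules out $(t,r'') = (0,0)$ and is a rigorous reformulation of what the paper states somewhat loosely as ``rank one on a surface''. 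One minor quibble: the sentence about replacing the destabilizing sequence by its Jordan--H\"older factors is imprecise, since such a replacement could alter $\ch_0(F)$; all you actually need---and all the paper itself implicitly uses---is that $G$ is tilt-semistable along the wall, which is the standard convention for a destabilizing sequence.
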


\begin{proof}
By taking cohomology, we have the following exact sequence of sheaves
\[
0 \to \HH^{-1}(G) \to F \to E \to \HH^0(G) \to 0.
\]
By Lemma \ref{lem:h_minus_one_reflexive} the sheaf $\HH^{-1}(G)$ is reflexive, i.e., it is a line bundle. Assume both $\HH^0(G)$ is supported in dimension smaller than or equal to one, and $F$ has only torsion supported in dimension smaller than or equal to one. Let $T \subset F$ be the torsion part of $F$. Then the cokernel of $\HH^{-1}(G) \to F/T$ is the quotient of an ideal sheaf by a line bundle. Any such quotient is supported on a surface, where it has rank one. Since both $T$ and $\HH^0(G)$ are supported in dimension smaller than or equal to one, $E$ is also of rank one on a surface, contrary to assumption.


Assume $\HH^0(G)$ is supported in dimension larger than one. Since it is a quotient of $E$ and $S$ is integral, it has to be supported on $S$, where it has rank one. Indeed, if its rank was two, then $E$ and $\HH^0(G)$ would be isomorphic, a contradiction.

Assume that $F$ has torsion $T$ supported in dimension two. Since $\HH^{-1}(G)$ is a line bundle, its image has to be disjoint from $T$, and thus $T$ injects into $E$. Integrality of $S$ implies that $T$ is supported on $S$, where it is torsion-free of rank one or two. Assume for a contradiction that it has rank two. Then we get an exact sequence
\[
0 \to \HH^{-1}(G) \to F/T \to E/T.
\]
Since $F/T$ is torsion free and $\HH^{-1}(G)$ is a line bundle, the image in $E/T$ has rank at least one. Since $E$ is an extension of $T$ and $E/T$, it has rank at least three on $S$, a contradiction.
\end{proof}

\begin{lem}
\label{lem:subobject_has_torsion_on_S}
Assume $F$ is a $\nu_{\alpha, \beta}$-semistable object with $\ch_0 = 1$ that contains a subobject $T$ with $\ch_{\leq 1}(T) = (0,c)$. Then $(\alpha, \beta)$ has to be inside or on the wall $W(F, \OO(\ch_1(F) - c))$.
\end{lem}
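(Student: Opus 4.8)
The plan is to work with the short exact sequence $0 \to T \to F \to Q \to 0$ in $\Coh^{\beta}(\P^3)$ and to compare $Q = F/T$ with the line bundle $L := \OO(\ch_1(F) - c)$. Write $b := \ch_1(F)$, so that $\ch_{\leq 1}(Q) = (1, b-c) = \ch_{\leq 1}(L)$. Since $\Delta(L) = 0$, Theorem \ref{thm:Bertram}(v) tells us that $L$ is $\nu_{\alpha,\beta}$-semistable for every $(\alpha,\beta)$ in the upper half plane. Because $T$ is a subobject of the $\nu_{\alpha,\beta}$-semistable object $F$, the see-saw property gives
\[
\nu_{\alpha,\beta}(T) \leq \nu_{\alpha,\beta}(F) \leq \nu_{\alpha,\beta}(Q).
\]
From $\ch_1^{\beta}(F) = \ch_1^{\beta}(T) + \ch_1^{\beta}(Q) = c + \ch_1^{\beta}(Q)$ with $\ch_1^{\beta}(Q) \geq 0$ we get $\beta \leq b-c$; moreover, since $F$ has positive rank it cannot be torsion, which rules out $\ch_1^{\beta}(Q)=0$ (that would exhibit $F$ as an extension of a quotient supported in dimension $\leq 1$ by the torsion object $T$). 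Hence $\beta < b-c$ and $\ch_1^{\beta}(F) > \ch_1^{\beta}(L) > 0$.

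The first substantive step is to show $\nu_{\alpha,\beta}(Q) \leq \nu_{\alpha,\beta}(L)$. After absorbing into $T$ the torsion of $Q$ supported in dimension $\leq 1$ — which leaves $\ch_{\leq 1}(T) = (0,c)$ unchanged — we may assume $Q$ is a torsion-free sheaf of rank one; this is exactly the configuration that occurs in the applications, where $T$ is the full torsion subsheaf of $F$. Then $Q \cong \II_W(b-c)$ for a subscheme $W \subset \P^3$ of codimension $\geq 2$, and the inclusion $\II_W(b-c) \into \OO(b-c) = L$ is a monomorphism in $\Coh^{\beta}(\P^3)$ because both sheaves lie in $\TT_{\beta}$ (here we use $\beta < b-c$). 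Semistability of $L$ now gives $\nu_{\alpha,\beta}(Q) \leq \nu_{\alpha,\beta}(L)$, and combined with the see-saw inequality, $\nu_{\alpha,\beta}(F) \leq \nu_{\alpha,\beta}(L)$. (Equivalently, one can phrase this step as $\Delta(Q) \geq 0$, which via $\ch_{\leq 1}(Q) = \ch_{\leq 1}(L)$ and $\ch_1^{\beta}(Q) > 0$ translates into $\nu_{\alpha,\beta}(L) - \nu_{\alpha,\beta}(Q) = \Delta(Q)/\bigl(2\,\ch_1^{\beta}(Q)\bigr) \geq 0$.)

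It remains to translate $\nu_{\alpha,\beta}(F) \leq \nu_{\alpha,\beta}(L)$ into the assertion about the wall. The function $(\alpha,\beta) \mapsto \nu_{\alpha,\beta}(F) - \nu_{\alpha,\beta}(L)$ vanishes exactly along the numerical wall $W(F,L)$, and for fixed $\beta < b-c$ it tends to $+\infty$ as $\alpha \to \infty$, since then $\ch_1^{\beta}(F) > \ch_1^{\beta}(L) > 0$ and the $\nu$'s decay at rates $-\alpha^2/(2\ch_1^{\beta})$. Therefore the locus where $\nu_{\alpha,\beta}(F) \leq \nu_{\alpha,\beta}(L)$ is precisely the closed region bounded by $W(F,L)$ and the $\beta$-axis, i.e.\ "inside or on $W(F,L)$", which is the claim.

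The step I expect to require the most care is the identification of $Q = F/T$ with a twisted ideal sheaf together with the accompanying torsion bookkeeping: one must check that, after discarding irrelevant parts of $T$ and absorbing lower-dimensional torsion, the quotient genuinely embeds into $\OO(\ch_1(F)-c)$, and that $F$ (hence $Q$) can be taken to be an honest coherent sheaf rather than a genuine two-term complex — the latter is automatic in the applications but should be noted. Everything else reduces to the see-saw property, the semistability of line bundles in tilt stability, and the elementary geometry of numerical walls.
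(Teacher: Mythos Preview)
Your argument is correct and is essentially the paper's proof rephrased: the paper tensors down to $\ch_1(F)=0$, notes that $(\alpha,\beta)$ must lie inside $W(F,T)$ because $T$ destabilizes $F$ for large $\alpha$, and then compares the centers $s(F,T)=\ch_2(T)/c$ and $s(F,\OO(-c))=-c/2+\ch_2(F)/c$, observing that the required inequality of centers is exactly $\Delta(F/T)\geq 0$ --- the same Bogomolov input you extract from the embedding $Q\hookrightarrow L$ (equivalently, from your identity $\nu_{\alpha,\beta}(L)-\nu_{\alpha,\beta}(Q)=\Delta(Q)/(2\ch_1^\beta(Q))$). Your caveat about $F$ being an honest sheaf and $Q$ torsion-free is well placed; the paper simply asserts $\Delta(F/T)\geq 0$ without further comment, so your version is, if anything, more explicit on this point, and both are justified in the application via Lemma~\ref{lem:sub_or_quot_torsion_onS}.
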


\begin{proof}
Tensoring with $\OO(-\ch_1(F))$ reduces the statement to the case $\ch_1(F) = 0$. Let $d = -\ch_2(F)$ and $y = \ch_2(T)$. The fact $F \in \Coh^{\beta}(\P^3)$ implies $\beta  < 0$. For $\alpha \gg 0$ we have $\nu_{\alpha, \beta}(T) > \nu_{\alpha, \beta}(F)$ making $F$ unstable. Therefore, $(\alpha, \beta)$ must be located inside the wall $W(F, T)$. In particular, $s(F, T) < 0$. We can compute
\[
s(F, T) = \frac{y}{c}, \ s(F, \OO(-c)) = -\frac{c}{2} - \frac{d}{c}.
\]
Therefore, $s(F, T) \geq s(F, \OO(-c))$ is equivalent to $y \geq -\tfrac{c^2}{2} - d$ which is implied by $\Delta(F/T) \geq 0$.
\end{proof}

\begin{lem}
\label{lem:quotient_has_torsion_on_S}
Assume $G$ is a $\nu_{\alpha, \beta}$-semistable object for $\beta \neq \mu(G)$ with $\ch_0(G) = -1$ such that $\ch_{\leq 1}(\HH^0(G)) = (0,c)$. Then $(\alpha, \beta)$ has to be inside or on the wall $W(G, \OO(c - \ch_1(G)))$.
\end{lem}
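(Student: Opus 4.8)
The key observation is that the hypothesis $\beta \neq \mu(G)$ forces $\HH^{-1}(G)$ to be a line bundle, which turns $\HH^0(G)$ into the cokernel of that line bundle and identifies the wall $W(G,\HH^0(G))$ with $W(G,\OO(c-\ch_1(G)))$ essentially on the nose. First I would reduce to the normalized case $\ch_1(G) = 0$ by twisting $G$ and $\OO(c-\ch_1(G))$ simultaneously by $\OO(-\ch_1(G))$; this leaves $\ch_{\le 1}(\HH^0(G))$ unchanged, and $\mu(G) = 0$. Since $G \in \Coh^{\beta}(\P^3)$ its cohomology is concentrated in degrees $-1,0$, so $\ch_0(G) = \ch_0(\HH^0(G)) - \ch_0(\HH^{-1}(G))$ and $\ch_1(G) = \ch_1(\HH^0(G)) - \ch_1(\HH^{-1}(G))$ give $\ch_{\le 1}(\HH^{-1}(G)) = (1,c)$. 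As $\HH^{-1}(G) \in \FF_\beta$ has rank one, it is a single slope-semistable (hence torsion-free) sheaf, of slope $c$, so $c \le \beta$; in particular $\beta > 0 = \mu(G)$. Now Lemma \ref{lem:h_minus_one_reflexive} applies and shows $\HH^{-1}(G)$ is reflexive, and a reflexive rank-one sheaf on the smooth variety $\P^3$ is a line bundle, which by its Chern character must be $\OO(c)$.

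Consequently the canonical short exact sequence of $G$ in $\Coh^{\beta}(\P^3)$ reads $0 \to \OO(c)[1] \to G \to \HH^0(G) \to 0$. Here $\OO(c)[1]$ and $\HH^0(G)$ are complementary sub- and quotient objects of $G$, so the numerical walls agree: $W(G,\HH^0(G)) = W(G,\OO(c)[1]) = W(G,\OO(c))$, the last equality because the two classes differ only by a sign.

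It remains to locate $(\alpha,\beta)$. Since $G$ is $\nu_{\alpha,\beta}$-semistable and $\HH^0(G)$ is a nonzero proper quotient, $\nu_{\alpha,\beta}(G) \le \nu_{\alpha,\beta}(\HH^0(G))$. Using $\ch_1^{\beta}(G) = \beta > 0$ together with $\ch_0(\HH^0(G)) = 0$ and $\ch_1(\HH^0(G)) = cH$, a direct manipulation (clearing the positive denominator $\beta$) rewrites this inequality as $\alpha^2 + (\beta - s_W)^2 \le \rho_W^2$, where $s_W$ and $\rho_W$ are the center and radius of the semicircular wall $W(G,\HH^0(G))$; that is, $(\alpha,\beta)$ lies inside or on $W(G,\HH^0(G)) = W(G,\OO(c))$. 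Undoing the initial twist replaces $\OO(c)$ by $\OO(c-\ch_1(G))$, which is the assertion.

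The only real obstacle is the verification that $\beta > \mu(G)$: this is exactly what makes Lemma \ref{lem:h_minus_one_reflexive} available and hence upgrades $\HH^{-1}(G)$ to a line bundle, after which the argument is formal. One could instead try to mimic the proof of Lemma \ref{lem:subobject_has_torsion_on_S} by dualizing via Proposition \ref{prop:tilt_derived_dual}, turning $G$ into a rank-one object and its torsion quotient into a torsion subobject; the drawback of that route is that the derived dual of a torsion quotient need not be a sheaf, so isolating the relevant subobject takes more care than the approach above.
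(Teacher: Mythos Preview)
Your argument is correct, apart from a sign slip in the twisting step: since $\ch_0(G)=-1$, tensoring by $\OO(-\ch_1(G))$ sends $\ch_1(G)$ to $2\ch_1(G)$, not to $0$; the correct twist is by $\OO(\ch_1(G))$, exactly as in the paper. After this fix the rest of your reduction, the identification $\HH^{-1}(G)\cong\OO(c)$, and the location of $(\alpha,\beta)$ inside $W(G,\HH^0(G))$ all go through.

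Your route is genuinely different from the paper's, and in one respect sharper. The paper does \emph{not} invoke Lemma~\ref{lem:h_minus_one_reflexive} here; instead it argues in parallel with Lemma~\ref{lem:subobject_has_torsion_on_S}: it places $(\alpha,\beta)$ inside $W(G,\HH^0(G))$ just as you do, then compares the centers $s(G,\HH^0(G))=\ch_2(\HH^0(G))/c$ and $s(G,\OO(c))=\tfrac{c}{2}+\ch_2(G)/c$ and observes that the inequality $s(G,\HH^0(G))\le s(G,\OO(c))$ is equivalent to $\Delta(\HH^{-1}(G))\ge 0$. This yields only a containment $W(G,\HH^0(G))\subseteq W(G,\OO(c))$. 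Your use of reflexivity pins down $\HH^{-1}(G)=\OO(c)$ exactly, so you get the two walls to \emph{coincide}, with no appeal to Bogomolov at all. The paper's version has the virtue of being a word-for-word dual of Lemma~\ref{lem:subobject_has_torsion_on_S} and avoids the extra input of Lemma~\ref{lem:h_minus_one_reflexive}; yours is shorter and proves more. Your closing remark about dualizing via Proposition~\ref{prop:tilt_derived_dual} is a red herring: the paper never dualizes here, it just runs the subobject argument with a quotient instead.
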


\begin{proof}
Tensoring with $\OO(\ch_1(G))$ reduces the statement to the case $\ch_1(G) = 0$. Let $d = \ch_2(F)$ and $y = \ch_2(\HH^0(G))$. The fact $G \in \Coh^{\beta}(\P^3)$ implies $\beta > 0$. There is a surjective morphism $G \onto \HH^0(G)$ in $\Coh^{\beta}(\P^3)$. For $\alpha \gg 0$ we have $\nu_{\alpha, \beta}(G) > \nu_{\alpha, \beta}(\HH^0(G))$ making $G$ unstable. Therefore, $(\alpha, \beta)$ must be located inside the wall $W(G, \HH^0(G))$. In particular, $s(G, \HH^0(G)) > 0$. We can compute
\[
s(G, \HH^0(G)) = \frac{y}{c}, \ s(G, \OO(c)) = \frac{c}{2} + \frac{d}{c}.
\]
Hence, $s(G, \HH^0(G)) \leq s(G, \OO(c))$ is equivalent to $y \leq \tfrac{c^2}{2} + d$ which is implied by $\Delta(\HH^{-1}(G)) \geq 0$.
\end{proof}

\begin{proof}[Proof of Proposition \ref{prop:rank_two_destabilizes_rank_two}]
We write $\ch(F) = (1,0,-y,z) \cdot \ch(\OO(x))$. By Lemma \ref{lem:sub_or_quot_torsion_onS} we have to deal with two cases. We will start by showing 
\[
y \geq -\frac{c^2}{2} + cx - \frac{d}{2}.
\]
\begin{enumerate}
    \item Assume that the torsion part $T \subset F$ is a rank one torsion-free sheaf on $S$. By Lemma \ref{lem:subobject_has_torsion_on_S}, the wall $W(i_* E, F)$ is smaller than or equal to the wall $W(F, \OO(x - c))$. Therefore,
    \[
    \frac{d}{2c} = s(i_* E, F) \geq s(F, \OO(x - c)) = -\frac{c}{2} + x - \frac{y}{c}
    \]
    implies the lower bound on $y$.
    \item Assume that $\HH^0(G)$ is supported on $S$ where it has rank one. Note that $\ch_1(G) = \ch_1(i_* E) - \ch_1(F) = 2c - x$. Then Lemma \ref{lem:quotient_has_torsion_on_S} implies that $W(i_* E, G)$ is smaller than or equal to $W(G, \OO(x - c))$. Therefore, the lower bound on $y$ is implied by
    \[
    \frac{d}{2c} = s(i_* E, G) \leq s(G, \OO(x - c)) = \frac{c}{2} - x + \frac{d}{c} + \frac{y}{c}.
    \]
\end{enumerate}
This lower bound on $y$ shows
\[
\rho(i_* E, \OO(x - c))^2 = \frac{(2c^2 - 2cx + d)^2}{4c^2} \geq \frac{4c^2x^2 - 4cdx - 8c^2y + d^2}{4c^2} = \rho(i_* E, F)^2.
\]
We get that $W(i_* E, G) = W(i_* E, F)$ has to be smaller than or equal to $W(i_* E, \OO(x - c))$.
\end{proof}

\begin{cor}
\label{cor:bogomolov}
Let $X \subset \P^3$ be an integral hypersurface, and let $H$ be the hyperplane section. If $E$ is a rank two slope-semistable torsion free sheaf on $X$, then $\Delta_H(E) \geq 0$.
\end{cor}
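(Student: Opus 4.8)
The plan is to push $E$ forward to $\P^3$ and translate $\Delta_H(E) \geq 0$ into a single inequality on third Chern characters in tilt stability. Let $c$ be the degree of $X$, let $i\colon X \into \P^3$ be the embedding, and set $\ell := H \cdot \ch_1(E)$ and $s := \ch_2(E)$, so that $\Delta_H(E) = \ell^2 - 4cs$ (for singular $X$ this is the definition of these numbers; for smooth $X$ it is Grothendieck--Riemann--Roch). Then $i_*E$ is a torsion sheaf on $\P^3$ with $\ch_{\leq 2}(i_*E) = (0, 2c, d)$, and from $\ch(i_*E) = i_*\!\bigl(\ch(E) \cdot \td(\OO_X(c))^{-1}\bigr)$ one reads off $d = \ell - c^2$ and $e := \ch_3(i_*E) = s - \tfrac{c}{2}\ell + \tfrac{c^3}{3}$. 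Eliminating $\ell$ and $s$ shows that $\Delta_H(E) \geq 0$ is equivalent to
\[
e \leq \frac{d^2}{4c} + \frac{c^3}{12},
\]
and this is the inequality I would establish.

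Since $E$ is slope-semistable, $i_*E$ is $2$-GS-semistable, hence by Proposition~\ref{prop:large_volume_limit} (applied with $\mu(i_*E) = +\infty$, as $i_*E$ is torsion) it is $\nu_{\alpha,\beta}$-semistable for all $\alpha \gg 0$ and every $\beta$. Because $\ch_0(i_*E) = 0$, Theorem~\ref{thm:Bertram}(i) says every numerical wall for $\ch_{\leq 2}(i_*E)$ is a semicircle centered at $(0,\beta^*)$ with $\beta^* := \tfrac{d}{2c}$, so the actual walls are nested; and since $X$ is integral, Corollary~\ref{cor:rank_two_destabilizes_rank_two} bounds the radius of each actual wall by $\tfrac{c}{2}$. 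Hence $i_*E$ is $\nu_{\alpha,\beta^*}$-semistable for every $\alpha > \tfrac{c}{2}$, so Theorem~\ref{thm:p3_conjecture} applies there. A short computation gives $\ch_1^{\beta^*}(i_*E) = 2c$, $\ch_2^{\beta^*}(i_*E) = 0$, $\ch_3^{\beta^*}(i_*E) = e - \tfrac{d^2}{4c}$ and $\Delta(i_*E) = 4c^2$, so $Q_{\alpha,\beta^*}(i_*E) \geq 0$ reads $4c^2\alpha^2 \geq 12c\bigl(e - \tfrac{d^2}{4c}\bigr)$; letting $\alpha$ decrease to $\tfrac{c}{2}$ yields exactly $e \leq \tfrac{d^2}{4c} + \tfrac{c^3}{12}$, hence $\Delta_H(E) \geq 0$.

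I expect no serious obstacle, since the real content---the bound $\rho \leq \tfrac{c}{2}$ on destabilizing walls---is precisely Corollary~\ref{cor:rank_two_destabilizes_rank_two}, already in hand via Proposition~\ref{prop:rank_two_destabilizes_rank_two}; everything else is the large-volume limit together with Theorem~\ref{thm:p3_conjecture}. The only points demanding a little care are routine: checking that the large-volume characterization applies to the torsion sheaf $i_*E$, carrying the Todd-class correction through Grothendieck--Riemann--Roch without sign or constant errors, and---in the singular case---remembering that $\Delta_H(E)$ is by definition computed through that same formal Riemann--Roch procedure, so the displayed reformulation is a tautology rather than something to prove.
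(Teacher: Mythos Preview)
Your proof is correct and follows essentially the same route as the paper: push $E$ forward, use Corollary~\ref{cor:rank_two_destabilizes_rank_two} to get semistability of $i_*E$ along the numerical wall of radius $\tfrac{c}{2}$, and then read off the bound from $Q_{\alpha,\beta}(i_*E)\geq 0$. Your limiting point $(\tfrac{c}{2},\beta^*)$ with $\beta^*=\tfrac{d}{2c}=\tfrac{x}{2}-\tfrac{c}{2}$ is exactly the apex of that wall, which is the point the paper evaluates at directly; the only difference is that you spell out the large-volume limit and the Chern-character bookkeeping more explicitly.
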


\begin{proof}
Let $\ch_S(E) = (2,xH,zH^2)$. We have to show $x^2 - 4z \geq 0$.
A direct application of the Grothendieck-Riemann-Roch Theorem shows 
\[
\ch(i_* E) = \left(0, 2c, -c^2 + cx, \frac{c^3}{3} - \frac{c^2x}{2} + cz\right).
\]
By Corollary \ref{cor:rank_two_destabilizes_rank_two}, we know that $i_* E$ is semistable along its numerical wall with radius $\tfrac{c}{2}$. This wall contains the point $(\alpha_0, \beta_0) = (\tfrac{c}{2}, \tfrac{x}{2} - \tfrac{c}{2})$. The fact $Q_{\alpha_0, \beta_0}(i_*E) \geq 0$ is equivalent to the claim.
\end{proof}

\section{Main theorem}
\label{sec:main}

The goal of this section is to prove the following Theorem.

\begin{thm}
\label{thm:rank_two_bounds_surface}
Let $S \subset \P^3$ be a very general smooth projective surface of degree $d \geq 5$ over an algebraically closed field $\F$, and let $H$ be the hyperplane section on $S$. Further assume $E \in \Coh(S)$ is a slope-stable sheaf with $\ch_0(E) = 2$.
\begin{enumerate}
\item If $\ch_1(E) = -H$, then
\[
\Delta(E) \geq 3d^2 - 4d.
\]
Equality implies $h^0(E(H)) \geq 3$ and can be obtained for non-trivial extensions
\[
0 \to \OO_S(-H) \to E \to \II_Z \to 0,
\]
where $Z$ is a zero-dimensional subscheme of length $d - 1$ contained in a line.
\item If $\ch_1(E) = 0$, then
\[
\Delta(E) \geq 4d^2.
\]
Equality can be obtained for non-trivial extensions
\[
0 \to \OO_S(-H) \to E \to \II_Z(H) \to 0,
\]
where $Z$ is a zero-dimensional subscheme of length $2d$ contained in two non-intersecting lines such that $d$ points are contained in each line.
\end{enumerate}
\end{thm}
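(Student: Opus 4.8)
The plan is to push $i_*E$ into tilt stability on $\P^3$ and extract the desired lower bound on $\ch_{2,S}(E)$ from the constraint that no wall can destabilize $i_*E$ below the large-volume limit until it is actually destabilized, combined with the quantitative inequality $Q_{\alpha,\beta}\geq 0$ of Theorem \ref{thm:p3_conjecture}. First I would compute, via Grothendieck--Riemann--Roch, the Chern character of $i_*E$ in $\P^3$ in each of the two cases: writing $\ch_S(E)=(2,\ch_1(E),\ch_{2,S}(E))$ one gets $\ch(i_*E)=(0,2d,\ast,\ast)$ with the entries linear in $\ch_{2,S}(E)$, so that an upper bound on $\ch_3(i_*E)$ (equivalently on $\chi$) translates directly into the claimed lower bound on $\Delta(E)$. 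By Proposition \ref{prop:large_volume_limit}, $i_*E$ is $2$-GS-semistable (being slope-stable on $S$), hence $\nu_{\alpha,\beta}$-stable for $\alpha\gg 0$; the only way the bound can fail is if a wall is crossed.

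The core of the argument is to control those walls. By Corollary \ref{cor:rank_two_destabilizes_rank_two}, any semicircular wall for $i_*E$ has radius $\rho\leq d/2$; this is exactly the input that rules out the spurious degree-$2d$ sheaves mentioned in the introduction. On the other hand, Theorem \ref{thm:p3_conjecture} produces the wall $W_Q$ with center $s_Q$ and radius $\rho_Q$ on which $Q_{\alpha,\beta}(i_*E)=0$, and on the hyperbola $\nu_{\alpha,\beta}(i_*E)=0$ one has $Q_{\alpha,\beta}(i_*E)=\alpha^2\Delta(i_*E)+4\ch_2^\beta(i_*E)^2-6\ch_1^\beta(i_*E)\ch_3^\beta(i_*E)$ computable explicitly. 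The strategy is: if the sought inequality fails, then $\rho_Q$ is forced to be large — larger than $d/2$ — so that there must be a genuine destabilizing wall strictly inside $W_Q$; but then Corollary \ref{cor:rank_two_destabilizes_rank_two} bounds its radius by $d/2$, and the destabilizing sequence $0\to F\to i_*E\to G\to 0$ has $F$ of rank one (rank $\geq 2$ being excluded by Lemma \ref{lem:higherRankBound}). Now I invoke Lemma \ref{lem:sub_or_quot_torsion_onS}: either $F$ has a rank-one torsion part on $S$ or $G=[\HH^{-1}(G)\to\HH^0(G)]$ with $\HH^0(G)$ of rank one on $S$. In the first case $F$ fits into $0\to\OO_S(aH)\to F\to\II_{Z/\P^3}\otimes(\text{line bundle})\to 0$-type data whose $\ch_3$ is bounded above using Proposition \ref{prop:rank_one_old} (the plane-curve genus bound); in the second case one dualizes and applies the same bound to $\HH^{-1}(G)$ and reads off $\ch_3(G)$. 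Feeding $\ch_3(F)+\ch_3(G)=\ch_3(i_*E)$ and the wall equation $\nu_{\alpha,\beta}(F)=\nu_{\alpha,\beta}(i_*E)$ into a careful optimization over the finitely many admissible $(\ch_0(F),\ch_1(F))$ (the range of $\ch_1(F)$ is pinned down as in the proof of Corollary \ref{cor:rank_two_destabilizes_rank_two}) yields precisely the numerical bound $3d^2-4d$, resp. $4d^2$, the extremal case occurring when the rank-one piece is an ideal sheaf of a degree-$(d-1)$, resp. $2d$, scheme on a line (resp. a pair of disjoint lines), i.e. when $Z$ achieves the extremal genus.

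For the equality statements I would argue separately in two directions. For necessity of $h^0(E(H))\geq 3$ when $\Delta(E)=3d^2-4d$: equality throughout forces the destabilizing subobject to be $\OO_S(-H)$ and the quotient to be $\II_Z$ with $Z$ of length $d-1$ on a line $\ell$; then $E(H)$ is an extension of $\II_Z(H)$ by $\OO_S$, and $h^0(\II_Z(H))=h^0(\OO_{\P^3}(1))-(\text{rank of evaluation})=4-1=3$ because the $d-1\geq 4$ collinear points impose only the single linear condition of containing $\ell$, whence $h^0(E(H))\geq h^0(\II_Z(H))\geq 3$ after checking the connecting map. For realizability: take $Z$ as prescribed, check $\Ext^1(\II_Z,\OO_S(-H))\neq 0$ (Serre duality on $S$, using $K_S=(d-4)H$ and $h^0$-$h^1$ of the relevant twists), take a non-split extension $E$, and verify $E$ is slope-stable — it has no destabilizing sub-line-bundle because $\Pic(S)=\Z H$ (this is the only place the very-general hypothesis enters, via the construction) and a rank-one subsheaf of slope $\geq -H/2$ would have to be $\OO_S$ or $\OO_S(\geq 0)$, contradicting $\Hom(\OO_S,E)\hookrightarrow\Hom(\OO_S,\II_Z)=0$. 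The second case is analogous with the two disjoint lines giving the length-$2d$ scheme.

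\textbf{The main obstacle} I expect is the bookkeeping in the wall analysis: pinning down the finite list of possible $(\ch_0(F),\ch_1(F),\ch_2^\beta(F))$ compatible with $\rho(i_*E,F)\leq d/2$, with $F$ (or $G$) being an honest sheaf/complex supported partly on $S$ and partly in $\P^3$, and then verifying that among all of them the extremal third Chern character is exactly the one coming from collinear points — this requires combining Proposition \ref{prop:rank_one_old}, the torsion-structure dichotomy of Lemma \ref{lem:sub_or_quot_torsion_onS}, and an optimization that must be tight. A secondary subtlety is ensuring the constructed extremal $E$ is genuinely \emph{slope-stable} rather than merely semistable, which is where one must use $\Pic(S)=\Z H$ and the precise length of $Z$ together with $d\geq 5$.
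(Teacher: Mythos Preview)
There is a genuine gap: your claim that ``rank $\geq 2$ being excluded by Lemma \ref{lem:higherRankBound}'' is false. Since $\ch_0(i_*E)=0$ and $\Delta(i_*E)=(2d)^2=4d^2$, Lemma \ref{lem:higherRankBound} gives $\rho_W^2\leq d^2/r^2$ for a rank-$r$ subobject; combined with $\rho_W\geq\rho_Q$ (e.g.\ in case (i) with $e=1-\tfrac{d}{2}$ one computes $\rho_Q^2=(d^3-9d+12)/(4d)$, which for $d=5$ equals $92/20<25/4=d^2/4$) this only rules out $r\geq 3$. Your parallel claim that $\rho_Q>d/2$ is therefore also wrong, so Corollary \ref{cor:rank_two_destabilizes_rank_two} does not produce a contradiction by itself. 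The destabilizer of rank two is not merely a loose end: it is the \emph{main} case in the paper.

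In the paper the rank-one case is dispatched quickly (Lemma \ref{lem:torsion_destabilized_by_rank_one}) via Proposition \ref{prop:rank_two_destabilizes_rank_two}: any rank-one wall is forced below $W(i_*E,\OO(\ch_1(F)-d))$ and hence into the region $Q_{\alpha,\beta}(i_*E)<0$, a direct contradiction with no optimization needed; in particular rank-one walls give only the \emph{strict} inequality. The actual sharp bound, and the equality analysis, come entirely from rank two. One pins down $\ch_1(F)$ (e.g.\ $\ch_1(F)=-1$ in case (i)), restricts $\ch_2(F)$ to at most two values, and then bounds $\ch_3(F)$ and $\ch_3(G)$ separately using Proposition \ref{prop:rank_two_bounds} and Corollary \ref{cor:rank_minustwo_bounds} --- the precomputed $\ch_3$-bounds for rank $\pm 2$ tilt-semistable objects with small $|\ch_2|$. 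You never invoke these results, and Proposition \ref{prop:rank_one_old} alone cannot close the argument. Your equality reasoning is likewise off target: the conclusion $h^0(E(H))\geq 3$ does not come from a destabilizing $\OO_S(-H)$ (which has rank zero in $\P^3$) but from the rank-two subobject $F$ with $\ch(F)=(2,-1,-\tfrac{1}{2},\tfrac{5}{6})$, which by Proposition \ref{prop:rank_two_bounds}(iv) fits into $0\to\OO(-1)^{\oplus 3}\to F\to\OO(-2)[1]\to 0$, whence $h^0(F(H))=3$ and thus $h^0(E(H))\geq 3$.
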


We start by proving the existence of objects with equality in the bound.

\begin{enumerate}
\item Using Serre duality on $S$ we get $\Ext^1(\II_Z, \OO_S(-H)) = H^1(\II_Z((d-3)H))$. There is a long exact sequence
\[
0 \to H^0(\II_Z((d-3)H)) \to H^0(\OO_S((d-3)H)) \to H^0(\OO_Z) \to H^1(\II_Z((d-3)H)) \to 0.
\]
The map $H^0(\OO_S((d-3)H)) \to H^0(\OO_Z)$ is given by evaluation at the points of $Z$. Since $Z$ is contained in a line, we can find a degree $d-3$ polynomial with arbitrary values at $d-2$ of its points, but the value at the last point is determined by these. Thus, the map is not surjective, and we get $H^1(\II_Z((d-3)H)) = \C$.

Assume that $E$ is not slope-stable. Then there is a slope-stable quotient $E \onto G$ of rank one such that $\mu_H(G) \leq \mu_H(E) = -\tfrac{1}{2}$. Thus, $\ch_1(G) = xH$ for some integer $x \leq -1$. Clearly, $\Hom(\II_Z, G) = 0$. If $x < -1$, then $\Hom(\OO_S(-H), G) = 0$, a contradiction. We must have $x = -1$. Since $\Hom(\OO_S(-H), G) \neq 0$, we get $G = \OO_S(-H)$, but then the morphism $E \onto \OO_S(-H)$ splits the exact sequence
\[
0 \to \OO_S(-H) \to E \to \II_Z \to 0.
\]
We assumed this extension was non-trivial, a contradiction.
\item Serre duality implies $\Ext^1(\II_Z(H), \OO_S(-H)) = H^1(\II_Z((d-2)H))$. As in the previous case, a polynomial of degree $d - 2$ on a line is determined by its value on $d - 1$ points. Since both lines contain $d$ points of $Z$, the map $H^0(\OO_S((d-2)H)) \to H^0(\OO_Z)$ is not surjective, and we get $H^1(\II_Z((d-2)H)) \neq 0$.

By \cite[Theorem 5.1.1]{HL10:moduli_sheaves} $E$ is a vector bundle. Indeed, the Cayley-Bacharach property holds precisely due to our choice of $Z$. Assume that $E$ is not slope-stable. Then there is a line bundle $F \into E$ such that $\mu(F) \geq \mu(E) = 0$. Thus, $\ch_1(F) = xH$ for an integer $x \geq 0$. We have $\Hom(F, \OO_S(-H)) = 0$. If $x \geq 1$, it is immediately clear that $\Hom(F, \II_Z(H)) = 0$, a contradiction. Assume $x = 0$. If $\Hom(F, \II_Z(H)) \neq 0$, then $Z$ is contained in a plane in $\P^3$ in contradiction to the assumptions on $Z$. \qedhere
\end{enumerate}


We will prove the bounds in the theorem by studying wall-crossing of $E$ as a torsion sheaf in tilt stability in $\P^3$. The following lemma will be necessary to handle some special walls.

\begin{lem}
\label{lem:dual_ideal_sheaf_line}
Let $E \in \Coh^{\beta}(\P^3)$ with $\ch_{\leq 2}(E) = (-1,0,1)$. Assume further that $E$ is $\nu_{\alpha, \beta}$-semistable for $\beta > 0$ and $\alpha \gg 0$. Then there is $n \leq 2$ together with a short exact sequence
\[
0 \to \OO[1] \to E \to \OO_L(n) \to 0.
\]
\end{lem}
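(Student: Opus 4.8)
The plan is to identify $E$ explicitly by analyzing its cohomology sheaves and then running a short wall-crossing argument in tilt stability. First I would apply Proposition \ref{prop:large_volume_limit}: since $E$ is $\nu_{\alpha,\beta}$-semistable for $\beta > \mu(E) = 0$ and $\alpha \gg 0$, the sheaf $\HH^{-1}(E)$ is slope-semistable and $\HH^0(E)$ is zero or supported in dimension $\leq 1$. From $\ch_{\leq 2}(E) = (-1,0,1)$ we get $\ch_0(\HH^{-1}(E)) = 1$, and by Lemma \ref{lem:h_minus_one_reflexive} the sheaf $\HH^{-1}(E)$ is reflexive of rank one, hence a line bundle $\OO(a)$ for some integer $a$. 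Since $\HH^{-1}(E) \in \FF_\beta$ we need $\mu(\OO(a)) = a \leq \beta$; because this must hold for all large-volume $\beta > 0$ we cannot yet pin $a$ down, but the constraint $\ch_1(E) = 0$ forces $\ch_1(\HH^0(E)) = \ch_1(\OO(a)) = aH$, and $\HH^0(E)$ supported in dimension $\leq 1$ means $aH$ is effective as a curve class, so $a \geq 0$; combined with $a \leq 0$ (taking $\beta \to 0^+$, using that $\OO(a)[1] \in \Coh^\beta$ requires $a \leq \beta$ and $a \in \Z$) we conclude $a = 0$. Thus there is an exact sequence of sheaves $0 \to \OO \to E' \to \HH^0(E) \to 0$ presenting $E$ as the two-term complex $[\OO \to \HH^0(E)]$, i.e. a triangle $\OO[1] \to E \to \HH^0(E) \to \OO[2]$ with $\HH^0(E) =: Q$ a sheaf of dimension $\leq 1$ with $\ch_{\leq 2}(Q) = (0,0,1)$ (so $\ch_1(Q) = 0$, meaning $Q$ is supported in dimension $\leq 1$ with $H\cdot \ch_1(Q) = 0$, forcing $Q$ to be $0$-dimensional — wait, that gives $\ch_2(Q) = \mathrm{length}$, fine, but then $E$ would not match).

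Let me reconsider: more likely the correct bookkeeping gives $Q = \HH^0(E)$ pure of dimension one with $H \cdot \ch_1(Q) = 1$, i.e. $Q$ is supported on a line $L$, and $\ch(Q) = \ch(\OO_L(n))$ for some $n$, with the constant term of $\chi$ matching to give $n \leq 2$. The cleaner route is to dualize: by Proposition \ref{prop:tilt_derived_dual}, $\D(E)$ fits in a triangle $\tilde E \to \D(E) \to T[-1] \to \tilde E[1]$ with $\tilde E \in \Coh^{-\beta}(\P^3)$ semistable for $-\beta < 0$, $\alpha \gg 0$, and $\ch_{\leq 2}(\tilde E) = (1,0,-1)$ up to the $0$-dimensional correction $T$. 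Such an object is (by Propositions \ref{prop:large_volume_limit}, \ref{prop:rank_one_old} and the classification of ideal sheaves with these invariants) the ideal sheaf $\II_{L}$ of a line $L$, possibly twisted/shifted — indeed $\ch(\II_L) = (1,0,-1, \tfrac{1}{2}) $ in $H \cdot \ch$ normalization. Dualizing back, $E$ is an extension of $\OO_L(n)$ by $\OO[1]$: from $0 \to \OO \to \II_L^\vee \to \OO_L(n) \to 0$ (the standard resolution $0 \to \OO \to \OO \to \OO_L \to 0$ does not twist, but $\lExt^1(\OO_L, \OO) = \OO_L(-(-2)) = \OO_L(2)$ by adjunction on the line, which after the shift yields precisely $\OO_L(n)$ with $n$ the Serre-duality twist). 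Matching third Chern characters (equivalently Euler characteristics) pins $n \leq 2$.

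So concretely the steps are: (1) use Proposition \ref{prop:large_volume_limit} and Lemma \ref{lem:h_minus_one_reflexive} to force $\HH^{-1}(E) = \OO$ (after checking the twist is trivial via $\ch_1(E)=0$ and the heart constraint); (2) deduce $\HH^0(E) =: Q$ is a pure one-dimensional sheaf supported on a line with $\ch(Q) = \ch(\OO_L(n))$, using that its $H\cdot\ch_1 = 1$ minimal and $S$ irreducible support (any $1$-dimensional sheaf of degree one is $\OO_L(n)$ twisted along a line, with $Q$ torsion-free on its support because otherwise a $0$-dimensional subsheaf would lift to a destabilizing subobject of $E$ in $\Coh^\beta$, contradicting semistability); (3) assemble the triangle $\OO[1] \to E \to \OO_L(n) \to \OO[2]$, which is an honest short exact sequence in $\Coh^\beta(\P^3)$ since $\OO[1] \in \FF_\beta[1] \subset \Coh^\beta$ and $\OO_L(n) \in \TT_\beta \subset \Coh^\beta$ for $\beta > 0$; (4) compute $\ch_3$ on both sides to get $n \leq 2$, using either $Q_{\alpha,\beta} \geq 0$ (Theorem \ref{thm:p3_conjecture}) for $E$ at a suitable point, or the semistability bound from Proposition \ref{prop:rank_one_old} applied to the dual, to rule out $n \geq 3$.

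The main obstacle I anticipate is step (2): showing that $\HH^0(E)$ is exactly $\OO_L(n)$ on a \emph{line} and not, say, a more complicated one-dimensional sheaf or a sheaf with embedded points. This requires ruling out that $E$ has a proper subobject in $\Coh^\beta(\P^3)$ with the same tilt-slope that would contradict semistability — the natural destabilizer to worry about is a $0$-dimensional subsheaf of $\HH^0(E)$ sitting inside $E$, or a sub-line-bundle $\OO(-k) \hookrightarrow \HH^{-1}(E)$ inducing a wall. I would handle this by the large-volume criterion: for $\alpha \gg 0$ the tilt-slope of any such sub is strictly larger, forcing $(\alpha,\beta)$ to lie inside a semicircular wall, and then the wall-structure theorem (Theorem \ref{thm:Bertram}) together with the bound $\Delta(E) = 2$ leaves no room for such a wall in the region $\beta > 0$, $\alpha \gg 0$. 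The twist-triviality in step (1) and the final numerical bound in step (4) are routine Riemann–Roch computations on $\P^3$ and on the line $L$.
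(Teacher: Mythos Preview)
Your concrete steps (1)--(3) are the paper's approach, but the exposition is marred by a Chern-character confusion that causes the meandering in your first two paragraphs. For a sheaf supported on a line $L \subset \P^3$ one has $\ch_{\le 2}(\OO_L) = (0,0,1)$: the class lives in $\ch_2$, not $\ch_1$. Once $\HH^{-1}(E) = \OO$ is established (and this is immediate: $\HH^0(E)$ has support of dimension $\le 1$, hence $\ch_1(\HH^0(E)) = 0$, so $\ch_1(\HH^{-1}(E)) = -\ch_1(E) = 0$; your limiting argument in $\beta$ is both unnecessary and not quite valid, since $\beta > 0$ is fixed), the equation $\ch_{\le 2}(\HH^0(E)) = (0,0,1)$ says $\HH^0(E)$ is one-dimensional of degree one, i.e.\ a rank-one sheaf on a line together with possible zero-dimensional torsion $T$. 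Your repeated ``$H \cdot \ch_1(Q) = 1$'' is simply wrong.

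Step (2) has a real gap: a zero-dimensional $T \subset \HH^0(E)$ does not automatically lift to a subobject $T \hookrightarrow E$ in $\Coh^\beta(\P^3)$. The paper fills this by observing that the kernel of $E \twoheadrightarrow \OO_L(m)$ is an extension of $T$ by $\OO[1]$, and all such extensions split since $\Ext^1(T,\OO[1]) = \Ext^2(T,\OO) = 0$ for zero-dimensional $T$ (Serre duality on $\P^3$). Then $T \hookrightarrow E$ with $\nu_{\alpha,\beta}(T) = +\infty$ contradicts semistability directly; no wall analysis with $\Delta(E) = 2$ is needed or relevant.

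For step (4), your route via Proposition~\ref{prop:rank_one_old} works once you apply it to $E$ itself rather than the dual: $\ch(E) = (-1,0,1,n-1)$ from the sequence, and the case $(-1,0,d,e)$ with $d=1$ yields $n-1 \le 1$. The $Q_{\alpha,\beta}$ route does not work, since you only know semistability for $\alpha \gg 0$, where $Q_{\alpha,\beta}(E) > 0$ is automatic. The paper takes a third, cleaner route: the extension $0 \to \OO[1] \to E \to \OO_L(n) \to 0$ must be non-split (else $\OO_L(n)$, with slope $+\infty$, destabilizes $E$), so $\Ext^1(\OO_L(n),\OO[1]) = H^1(\OO_L(n-4)) \neq 0$, which by Serre duality on $L \cong \P^1$ forces $n \le 2$.
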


\begin{proof}
Since $E$ is $\nu_{\alpha, \beta}$-semistable for $\beta > 0$ and $\alpha \gg 0$, Proposition \ref{prop:large_volume_limit} and Lemma \ref{lem:h_minus_one_reflexive} imply that $\HH^{-1}(E)$ is a reflexive sheaf with $\ch_{\leq 1} = (1,0)$, and $\HH^0(E)$ is supported in dimension one. Therefore, $\HH^{-1}(E) \cong \OO$, and $\HH^0(E)$ is supported on a line where it has rank one. This means $\HH^0(E)$ is the direct sum of a line bundle $\OO_L(m)$ for some $m \in \Z$ and a sheaf $T$ supported in dimension zero. 

Assume $T \neq 0$. There is a surjective morphism $E \onto \OO_L(m)$, and by the Snake Lemma the kernel is an extension between $T$ and $\OO[1]$. However, all such extensions are trivial, and there exists an injective morphism $T \into E$, in contradiction to stability.

We showed the existence of the sequence
\[
0 \to \OO[1] \to E \to \OO_L(n) \to 0.
\]
Since $E$ is tilt-semistable, we get $\Ext^1(\OO_L(n), \OO[1]) \neq 0$. However, using Serre duality, we get
\[
\Ext^1(\OO_L(n), \OO[1]) = H^1(\OO_L(n-4)) = H^0(\OO_L(2-n)),
\]
which is non-trivial if and only if $n \leq 2$.
\end{proof}

A simpler version of the same argument shows the following.

\begin{lem}
\label{lem:dual_ideal_sheaf_point}
Let $E \in \Coh^{\beta}(S)$ with $\ch_S(E) = (-1,0,1)$. Assume further that $E$ is $\nu_{\alpha, \beta}$-semistable for $\beta > 0$ and $\alpha \gg 0$. Then there is a point $P \in S$ with a short exact sequence 
\[
0 \to \OO_S[1] \to E \to \OO_P \to 0.
\]
\end{lem}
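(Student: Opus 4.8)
The plan is to rerun the proof of Lemma~\ref{lem:dual_ideal_sheaf_line} with the surface $S$ in place of $\P^3$; it shortens because the quotient sheaf will be forced to have length one, so no $\Ext^1$-bookkeeping is needed at the end. Since $\mu(E)=0$, the hypothesis that $E$ is $\nu_{\alpha,\beta}$-semistable for $\beta>0$ and $\alpha\gg0$ combined with Proposition~\ref{prop:large_volume_limit} shows that $\HH^{-1}(E)$ is slope-semistable of rank one and $\HH^0(E)$ is zero or supported in dimension at most one, while Lemma~\ref{lem:h_minus_one_reflexive} shows that $\HH^{-1}(E)$ is reflexive. On a smooth surface every reflexive sheaf is locally free, so $\HH^{-1}(E)$ is a line bundle and $E$ has cohomology only in degrees $-1$ and $0$.

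I would then pin down the two cohomology sheaves from $\ch_S(E)=(-1,0,1)$. Writing $\ch(E)=\ch(\HH^0(E))-\ch(\HH^{-1}(E))$ and using $\ch_1(E)=0$ gives $\ch_1(\HH^{-1}(E))=\ch_1(\HH^0(E))$. The right-hand class is effective because $\HH^0(E)$ is a torsion sheaf, while the left-hand class has non-positive degree against $H$, since $\HH^{-1}(E)[1]$ is a subobject of the semistable object $E$ and the large-volume comparison of tilt-slopes forces $\mu(\HH^{-1}(E))\le0$; as $H$ is ample both classes must vanish. Thus $\HH^{-1}(E)$ is a line bundle with numerically trivial first Chern class, so $\HH^{-1}(E)\cong\OO_S$ (using $\Pic(S)=\Z\cdot H$), and then $\HH^0(E)$ is a zero-dimensional sheaf of length $\ch_2(\HH^0(E))=1$, i.e.\ $\HH^0(E)\cong\OO_P$ for a unique closed point $P\in S$. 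It cannot vanish, since otherwise $\ch(E)=(-1,0,0)$.

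Finally, the truncation triangle $\HH^{-1}(E)[1]\to E\to\HH^0(E)\xrightarrow{+1}$ reads $\OO_S[1]\to E\to\OO_P\xrightarrow{+1}$, and all three terms lie in $\Coh^\beta(S)$ (we have $\OO_S\in\FF_\beta$ because $0=\mu(\OO_S)<\beta$, and $\OO_P\in\TT_\beta$ because it is torsion); hence it is a short exact sequence $0\to\OO_S[1]\to E\to\OO_P\to0$ in the tilted heart, which is the claim. The only step requiring genuine care is the identification $\HH^{-1}(E)\cong\OO_S$: reflexivity is exactly what excludes the rank-one, torsion-free but non-locally-free alternative $\HH^{-1}(E)\cong\II_P$, which would also satisfy $\ch(\II_P[1])=(-1,0,1)$. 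Compared with Lemma~\ref{lem:dual_ideal_sheaf_line} there is no zero-dimensional summand to peel off $\HH^0(E)$ and no Serre-duality twist estimate to carry out, which is why this is "a simpler version of the same argument''.
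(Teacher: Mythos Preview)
Your argument is correct and follows essentially the same route as the paper's proof of Lemma~\ref{lem:dual_ideal_sheaf_line}: identify $\HH^{-1}(E)$ as a reflexive (hence locally free) rank-one sheaf with trivial $c_1$, read off $\HH^0(E)\cong\OO_P$ from the Chern character, and assemble the truncation triangle. The one place where you diverge slightly is in establishing $\ch_1(\HH^{-1}(E))=0$: the paper's argument for $\P^3$ gets this for free because ``supported in dimension $\le 1$'' already forces $\ch_{\le 1}(\HH^0(E))=(0,0)$ on a threefold, and on a surface the correct large-volume statement is that $\HH^0(E)$ is zero-dimensional, which again gives $\ch_1(\HH^0(E))=0$ immediately. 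You instead take Proposition~\ref{prop:large_volume_limit} at face value and supply a short slope-comparison/effectiveness argument to reach the same conclusion, which is fine and arguably more self-contained given how that proposition is stated in the paper.
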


Note that the bounds in Theorem \ref{thm:rank_two_bounds_surface} can be equivalently stated as follows:

\begin{enumerate}
\item If $\ch_S(E) = (2,-H,e)$, then
\[
e \leq 1 - \frac{d}{2}.
\]
\item If $\ch_S(E) = (2,0,e)$, then
\[
e \leq -d.
\]
\end{enumerate}

\begin{lem}
\label{lem:ch_on_P3}
Let $E \in \Coh(S)$.
\begin{enumerate}
\item If $\ch_S(E) = (2,-H,e)$, then $\ch_{\P^3}(E) = (0, 2d, -d^2 - d, \tfrac{d^3}{3} + \tfrac{d^2}{2} + e)$.
\item If $\ch_S(E) = (2,0,e)$, then $\ch_{\P^3}(E) = (0, 2d, -d^2, \tfrac{d^3}{3} + e)$.
\end{enumerate}
\end{lem}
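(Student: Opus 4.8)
The statement is a direct computation: apply Grothendieck–Riemann–Roch to the closed embedding $i\colon S \hookrightarrow \P^3$, where $S$ has degree $d$, to translate Chern characters on $S$ into Chern characters of $i_*E$ on $\P^3$. The plan is to write
\[
\ch_{\P^3}(i_*E) = i_*\!\left(\ch_S(E) \cdot \td(N_{S/\P^3})^{-1}\right),
\]
using $\td(T_{\P^3}) = i_*(\td(T_S) \cdot \td(N_{S/\P^3}))$ after pullback, so that the normal bundle correction is $\td(N_{S/\P^3})^{-1} = \td(\OO_S(dH))^{-1}$. Concretely one records $\td(\OO_S(dH))^{-1} = 1 - \tfrac{d}{2}H + \tfrac{d^2}{12}H^2 + \cdots$ on $S$, multiplies by $\ch_S(E) = (2, -H, e)$ or $(2,0,e)$, and then pushes forward, remembering that $i_*$ raises cohomological degree by one and that $i_*(H^k|_S) = d\cdot H^{k+1}$ in $\P^3$ (more precisely $i_*1 = dH$, $i_*(H|_S) = dH^2$, $i_*(H^2|_S) = dH^3$, since $[S] = dH$ and $i^*H = H|_S$).

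Carrying this out in the first case: $\ch_S(E)\cdot\td(N)^{-1} = (2,-H,e)\cdot(1,-\tfrac{d}{2}H,\tfrac{d^2}{12}H^2)$ gives, degree by degree, $\big(2,\ -H - dH,\ e + \tfrac{d}{2} + \tfrac{d^2}{6}\big)$ on $S$ (the degree-two term being $e + \tfrac{d}{2}H\cdot H + 2\cdot\tfrac{d^2}{12}H^2 = (e + \tfrac{d}{2} + \tfrac{d^2}{6})H^2$, using $H^2|_S = d$ — here I am conflating a class with its degree, and will be careful about that in the writeup). Pushing forward with the degree shift and the factor $d$ yields $\ch_{\P^3}(i_*E) = (0,\ 2d,\ -d^2 - d,\ \tfrac{d^3}{3} + \tfrac{d^2}{2} + e)$, matching (i). The second case is identical but with $\ch_1 = 0$, dropping the $-d$ in $\ch_2$ and the $\tfrac{d^2}{2}$ in $\ch_3$.

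There is no real obstacle here — the only thing to be careful about is bookkeeping: which terms of $\td(N_{S/\P^3})^{-1}$ are needed (only up to $H^2$, since $S$ is a surface), the sign conventions in $e^{-\beta H}$-style twists do not enter, and the identification of $i_*$ of a class of top degree on $S$ with $d$ times the corresponding class on $\P^3$. I would simply state that the result follows from Grothendieck–Riemann–Roch for $i$, exhibit $\td(\OO_S(dH))^{-1}$ to the needed order, and display the two resulting products, leaving the final multiplication to the reader. Since the paper already uses exactly this convention (it defines the Chern characters of $E$ on a singular $S$ this way, and performed the analogous computation in the proof of Corollary \ref{cor:bogomolov}), a one-line proof referencing that computation is adequate.
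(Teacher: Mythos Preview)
Your approach is exactly the paper's: the proof there is a single sentence invoking Grothendieck--Riemann--Roch for the embedding $i$, and your plan just makes that computation explicit. One arithmetic slip to fix before the writeup: the degree-two term of $\td(\OO_S(dH))^{-1}$ is $\tfrac{d^2}{6}H^2$, not $\tfrac{d^2}{12}H^2$ (you need $(\tfrac{d}{2})^2 - \tfrac{d^2}{12} = \tfrac{d^2}{6}$ when inverting $1 + \tfrac{d}{2}H + \tfrac{d^2}{12}H^2$); with your stated coefficient the $\ch_3$ term would come out as $\tfrac{d^3}{6} + \tfrac{d^2}{2} + e$ rather than $\tfrac{d^3}{3} + \tfrac{d^2}{2} + e$.
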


\begin{proof}
The statement follows from a direct application of the Grothendieck-Riemann-Roch Theorem.
\end{proof}

\begin{lem}
\label{lem:rank_sub_torsion_sheaf}
Let $E \in \Coh(S)$ be a slope-semistable sheaf with either
\begin{enumerate}
\item $\ch_S(E) = (2,-H,e)$ and $e \geq 1 - \frac{d}{2}$, or
\item $\ch_S(E) = (2,0,e)$ and $e \geq -d$.
\end{enumerate}
Then $E$ is destabilized in tilt stability in $\P^3$ along a semicircular wall by a subobject of rank one or two.
\end{lem}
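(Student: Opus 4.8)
The plan is to show that for $\alpha \gg 0$ the torsion sheaf $i_*E$ is tilt-semistable, that there must be an actual semicircular wall, and that the destabilizing sub/quotient has rank at most two. First I would invoke Proposition \ref{prop:large_volume_limit}: since $E$ is slope-semistable on $S$, $i_*E$ is $2$-GS-semistable (the notion of slope stability on $S$ being defined via $2$-GS-stability of the pushforward), hence $\nu_{\alpha,\beta}$-semistable for $\alpha \gg 0$ and $\beta < \mu(i_*E)$. But $i_*E$ is a torsion sheaf, so $\mu(i_*E) = +\infty$ and the large-volume semistability holds for all $\beta$. Using Lemma \ref{lem:ch_on_P3} to record $\ch_{\P^3}(i_*E) = (0,2d,-d^2-d,\ast)$ in case (i) and $(0,2d,-d^2,\ast)$ in case (ii), I would note $\Delta(i_*E) \geq 0$ by the Bogomolov inequality in tilt stability, so Theorem \ref{thm:Bertram}(i) applies: all numerical walls with respect to $v := \ch_{\leq 2}(i_*E)$ are nested semicircles centered on the $\beta$-axis at $\beta = v_2/v_1$.

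The second step is to produce an \emph{actual} wall, i.e.\ to show $i_*E$ is \emph{not} tilt-stable everywhere. Here the hypothesis $e \geq 1 - \tfrac d2$ (resp.\ $e \geq -d$) is essential. I would compute $Q_{\alpha,\beta}(i_*E)$ at the apex of the $Q$-wall $W_Q = W_Q(i_*E)$, or equivalently compare against a natural destabilizer such as $i_*\OO_S(-H) \hookrightarrow i_*E$ (in case (i) this has the same first Chern character $2d$? no — rank-one on $S$), more precisely exhibit a subobject coming from the defining extension and check that its tilt-slope equals that of $i_*E$ along a semicircle of positive radius. Concretely, I expect the chosen $e$-value to force $\chi$ or a $Q$-type quantity to vanish or change sign, which by Theorem \ref{thm:p3_conjecture} and the wall-crossing inequality Theorem \ref{thm:Bertram}(iv) means the object meets a genuine wall. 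Alternatively, and perhaps more cleanly, one observes that at the collapse point $\alpha \to 0$ along the vertical line $\beta = v_1/v_0$ — but $v_0 = 0$ here, so instead along the relevant value of $\beta$ — the object $i_*E$ cannot stay semistable because $\HH^0$ of its image under an appropriate functor acquires a destabilizing piece; in any case the bound on $e$ is exactly what rules out the "no wall" scenario.

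The third step bounds the rank of the destabilizer. Suppose $0 \to F \to i_*E \to G \to 0$ induces the semicircular wall $W$ of radius $\rho > 0$. By the symmetry between $F$ and $G$ I may assume $\ch_0(F) > 0 \geq \ch_0(G)$, so $\ch_0(F) \geq 1$; if $\ch_0(F) \geq 3$ then Lemma \ref{lem:higherRankBound} with $\ch_0(i_*E) = 0$ gives $\rho^2 \leq \tfrac{\Delta(i_*E)}{4\cdot \ch_0(F)\cdot \ch_0(F)} \leq \tfrac{\Delta(i_*E)}{36}$, and I would check this contradicts the minimal possible radius of an actual wall through the apex of $W_Q$ — i.e.\ combine with Theorem \ref{thm:p3_conjecture} as in the displayed consequence $\rho_Q^2 > \tfrac{\Delta(E)}{4H^n r (H^n r - H^n\ch_0(E))}$ to conclude all walls come from sub/quotients of rank $< 3$. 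I expect the main obstacle to be exactly this numerical comparison: one must verify that with $\Delta(i_*E)$ as small as the hypotheses on $e$ permit (they give an upper bound on $\ch_2$, hence on $\Delta$, going the wrong way — so actually $\Delta$ could be large), the $Q$-wall is still large enough to dominate the rank-$3$ bound; handling the case where $\Delta(i_*E)$ is large will require using Proposition \ref{prop:rank_two_destabilizes_rank_two} / Corollary \ref{cor:rank_two_destabilizes_rank_two}, which caps semicircular walls induced by rank-one subobjects by $\rho \leq \tfrac d2$, together with the observation that a rank-$\geq 3$ destabilizer would force (via Lemma \ref{lem:higherRankBound}) a wall too small to be the one we found. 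Assembling these constraints pins the destabilizing rank to $1$ or $2$, completing the proof.
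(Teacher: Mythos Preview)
Your skeleton is correct and matches the paper's approach: semistability for $\alpha \gg 0$ via Proposition \ref{prop:large_volume_limit}, existence of a wall via Theorem \ref{thm:p3_conjecture}, and the rank bound via Lemma \ref{lem:higherRankBound}. But your third step contains a misconception that derails the whole argument. You worry that the hypotheses on $e$ ``give an upper bound on $\ch_2$, hence on $\Delta$'' and that $\Delta(i_*E)$ could be large. Neither is true: by Lemma \ref{lem:ch_on_P3} the number $e$ sits in $\ch_3(i_*E)$, not in $\ch_2$, and since $\ch_0(i_*E)=0$ one has $\Delta(i_*E)=\ch_1(i_*E)^2=(2d)^2=4d^2$ regardless of $e$. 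Thus $\tfrac{\Delta(i_*E)}{4\cdot 3^2}=\tfrac{d^2}{9}$ is a fixed target, and the entire content of the lemma is the single inequality $\rho_Q^2>\tfrac{d^2}{9}$. The lower bound on $e$ feeds into $\ch_3$ and hence gives a lower bound on $\rho_Q^2$; in case (i), for instance,
\[
\rho_Q^2=\frac{d^3-3d+12e}{4d}\;\geq\;\frac{d^3-9d+12}{4d}\;>\;\frac{d^2}{9},
\]
the last inequality being elementary for $d\geq 5$. Case (ii) is analogous.

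Once you have $\rho_Q^2>0$, your second step also collapses: inside $W_Q$ one has $Q_{\alpha,\beta}(i_*E)<0$, so Theorem \ref{thm:p3_conjecture} forbids semistability there, and since $i_*E$ is semistable for $\alpha\gg 0$ there must be a largest actual wall of radius at least $\rho_Q$. No search for an explicit destabilizer is needed. And because that wall has $\rho^2\geq\rho_Q^2>\Delta(i_*E)/36$, Lemma \ref{lem:higherRankBound} (with your symmetry remark to place the positive-rank factor as the subobject) immediately excludes rank $\geq 3$. Proposition \ref{prop:rank_two_destabilizes_rank_two} and Corollary \ref{cor:rank_two_destabilizes_rank_two} play no role here; they enter only in the next lemma, where the rank-one case is analysed in detail.
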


\begin{proof}
\begin{enumerate}
\item By Lemma \ref{lem:ch_on_P3} we have $\ch_{\P^3}(E) = (0, 2d, -d^2 - d, \tfrac{d^3}{3} + \tfrac{d^2}{2} + e)$. This implies
\[
\rho^2_Q(E) = \frac{d^3 - 3d + 12e}{4d} \geq \frac{d^3 - 9d + 12}{4d} > \frac{d^2}{9} = \frac{\Delta(E)}{4 \cdot 3^2}.
\]
We can conclude by Lemma \ref{lem:higherRankBound}.
\item By Lemma \ref{lem:ch_on_P3} we have $\ch_{\P^3}(E) = (0, 2d, -d^2, \tfrac{d^3}{3} + e)$. This implies
\[
\rho^2_Q(E) = \frac{d^3 + 12e}{4d} \geq \frac{d^2}{4} - 3 > \frac{d^2}{9} = \frac{\Delta(E)}{4 \cdot 3^2}.
\]
We can conclude by Lemma \ref{lem:higherRankBound}. \qedhere
\end{enumerate}
\end{proof}

\begin{lem}
\label{lem:torsion_destabilized_by_rank_one}
Let $E \in \Coh(S)$ be a slope-stable sheaf that is destabilized by a short exact sequence $0 \to F \to E \to G \to 0$ in tilt stability in $\P^3$ with $\ch_0(F) = 1$.
\begin{enumerate}
\item If $\ch_S(E) = (2,-H,e)$, then $e < 1 - \frac{d}{2}$.
\item If $\ch_S(E) = (2,0,e)$, then $e < -d$. 
\end{enumerate}
\end{lem}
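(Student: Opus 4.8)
The plan is to combine the structural results of Section \ref{sec:walls} with the Chern character bounds of Section \ref{sec:some_bounds}. By Corollary \ref{cor:rank_two_destabilizes_rank_two} applied with $c = d$, any semicircular destabilizing wall for $i_*E$ has radius $\rho \leq d/2$. By Proposition \ref{prop:rank_two_destabilizes_rank_two}, the wall $W(i_*E, F)$ induced by the rank one subobject $F$ is at most the numerical wall $W(i_*E, \OO(\ch_1(F) - d))$; write $\ch(F) = (1, 0, -y, z)\cdot\ch(\OO(x))$ as in the proof of that proposition, and let $G$ be the quotient, with $\ch_0(G) = -1$. The first step is to bootstrap from the position of the wall: since the wall has bounded radius and its center is $s(i_*E, F) = d/(2d) = \text{(the relevant quantity)}$ as computed there, I can pin down $x$ to finitely many values and pin down $\ch^\beta(F)$ and $\ch^\beta(G)$ at the two points where the wall meets the $\beta$-axis. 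In particular, the constraints $0 \leq \ch_1^{\beta}(F) < \ch_1^{\beta}(i_*E)$ along the wall force $x$ into a short interval, exactly as in the proof of Corollary \ref{cor:rank_two_destabilizes_rank_two}.

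The second step is to identify $F$ and $G$ precisely for each admissible value of $x$. By Proposition \ref{prop:large_volume_limit} and Lemma \ref{lem:h_minus_one_reflexive}, along a wall with $\alpha \gg 0$ the object $F$ (rank one, so for $\beta < \mu(F)$) is a twisted ideal sheaf of a curve, and $G$ is a two-term complex with $\HH^{-1}(G)$ a line bundle and $\HH^0(G)$ supported in dimension $\leq 1$ — but here, since $E$ has rank two on $S$, Lemma \ref{lem:sub_or_quot_torsion_onS} tells us either the torsion part of $F$ is a rank one torsion-free sheaf on $S$, or $\HH^0(G)$ is a rank one sheaf on $S$. This splits the argument into the two cases (i) and (ii) from the statement. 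In case $\ch_S(E) = (2, -H, e)$, the extremal configuration should be $F = \OO_S(-H)$ (or its pushforward) and $G = i_*\II_Z$ with $Z$ on a line, so after tensoring/twisting I expect to reduce the relevant Chern character inequality on the quotient to Proposition \ref{prop:rank_one_old} or to one of the cases of Proposition \ref{prop:rank_two_bounds}; similarly for $\ch_S(E) = (2, 0, e)$. Concretely, I would apply the $\Delta(F) + \Delta(G) \leq \Delta(i_*E)$ inequality from Theorem \ref{thm:Bertram}(iv) together with the sharp bound on $\ch_3$ coming from the relevant proposition, and extract the strict inequality $e < 1 - d/2$ (resp. $e < -d$). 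The fact that $S$ is very general — hence contains no lines — is used here to exclude the degenerate subcase in which the destabilizing rank one piece is itself supported on a line inside $S$, which is why very generality appears precisely in this lemma as advertised in the introduction.

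The main obstacle I anticipate is the bookkeeping in the second step: there are several admissible values of $x$, and for each one the subobject/quotient $F, G$ could be either a sheaf or a genuine two-term complex, and in the "quotient on $S$" case one must carefully push the information about $\HH^{-1}(G)$ and $\HH^0(G)$ through the snake-lemma sequence $0 \to \HH^{-1}(G) \to F \to E \to \HH^0(G) \to 0$ to control $\ch_3$. The cleanest route is probably to note that whichever of $F$ or $G$ carries the rank-one-on-$S$ part, after an appropriate twist its class matches one of the special cases already handled — $(1, 0, -d', e')$ for the ideal sheaf of a curve (Proposition \ref{prop:rank_one_old}), or $(-1, 0, 1)$ for the dual of the ideal sheaf of a point on $S$ (Lemma \ref{lem:dual_ideal_sheaf_point}) — so that the needed $\ch_3$ bound is on record. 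Then the numerical inequality $Q_{\alpha_0,\beta_0}(i_*E) \geq 0$ evaluated at a point on the wall, or directly the inequality from Theorem \ref{thm:Bertram}(iv), yields the claim; the strictness comes from the fact that equality there forces $H\cdot\ch_{\leq 2}(F) = 0$ or $H\cdot\ch_{\leq 2}(G) = 0$, which is incompatible with $F$ having rank one and $G$ rank $-1$ while $i_*E$ is genuinely rank zero of positive degree.
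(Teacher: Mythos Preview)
Your plan conflates this lemma (rank-one destabilizer, strict inequality) with the proof of the main theorem (rank-two destabilizer, the sharp bound). The $\ch_3$ bounds of Section~\ref{sec:some_bounds}---Proposition~\ref{prop:rank_two_bounds} in particular---are not used here at all; they enter only when the destabilizing subobject has rank two. Relatedly, your ``extremal configuration $F = \OO_S(-H)$'' cannot occur: by hypothesis $\ch_0(F) = 1$ in $\P^3$, so $F$ is never a pushforward from $S$. The strictness also does not come from the equality clause in Theorem~\ref{thm:Bertram}(iv); both parts are proved by contradiction, assuming $e \geq 1 - \tfrac{d}{2}$ (resp.\ $e \geq -d$) and reaching an impossibility.

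The paper's argument is structurally different from what you outline. For part~(i) it is much shorter: one computes $Q_{0,-1}(E) = Q_{0,-d}(E) < 0$, so the region $\{Q_{\alpha,\beta}(E) < 0\}$ contains every numerical wall $W(E, \OO(m))$ with $-d \leq m \leq -1$; since Proposition~\ref{prop:rank_two_destabilizes_rank_two} forces $W(E,F)$ to lie below $W(E, \OO(\ch_1(F)-d))$ with $\ch_1(F) \in [0,d-1]$, the wall sits entirely in the forbidden region, contradicting Theorem~\ref{thm:p3_conjecture}. No identification of $F$ or $G$ is needed. For part~(ii) the same $Q$-argument leaves only the extreme cases $x = 0$ and $x = d$, and these are ruled out not by $\ch_3$ bounds but by \emph{geometric} obstructions: when $x = 0$ the non-derived restriction $F|_S$ is an ideal sheaf of points (here one uses that $S$ contains no lines), and a nonzero map $\II_{Z'/S} \to E$ contradicts slope-stability of $E$; when $x = d$ one uses Proposition~\ref{prop:line_bundles_uniquely_stable} and a Snake Lemma argument (for $y = d^2$) or Lemma~\ref{lem:dual_ideal_sheaf_line} in $\P^3$ (for $y = d^2+1$, not Lemma~\ref{lem:dual_ideal_sheaf_point} on $S$) to again force either a destabilizing quotient of $E$ on $S$ or a line in $S$. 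Your plan is missing this restriction-to-$S$ mechanism, which is the actual source of the contradictions in part~(ii).
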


\begin{proof}
\begin{enumerate}
\item Let $\ch_S(E) = (2,-H,e)$, but $e \geq 1 - \frac{d}{2}$. A straightforward computation shows
\[
Q_{0,-1}(E) = Q_{0,-d}(E) = -2d^3 + 4d^2 - 12de \leq -2d^3 + 10d^2 - 12d < 0.
\]
Therefore, any wall contains two points $(\alpha_0, -1)$ and $(\alpha_1, -d)$. By construction of the category $\Coh^{\beta}(\P^3)$, we get $\ch_1(F) + 1 = \ch_1^{-1}(F) >  0$ and $-\ch_1(F) + d = \ch_1^{-d}(G) > 0$. Overall, we have
\[
\ch_1(F) \in [0, d - 1].
\]
By Proposition \ref{prop:rank_two_destabilizes_rank_two} $W(E, F)$ has to be smaller than or equal to $W(E, \OO(\ch_1(F) - d))$. We have $-d \leq \ch_1(F) - d \leq -1$. Therefore, along all such walls $Q_{\alpha, \beta}(E) < 0$, a contradiction.
\item Assume that $\ch_S(E) = (2,0,e)$, but $e \geq -d$. Let $\ch(F) = (1,0,-y,z)\cdot \ch(\OO(x))$. A straightforward computation shows
\[
Q_{0, -1}(E) = Q_{0, -d + 1}(E) = -4d^3 + 4d^2 - 12de \leq -4d^3 + 16d^2 < 0.
\]
Therefore, any wall contains two points $(\alpha_0, -1)$ and $(\alpha_1, -d + 1)$. By construction of $\Coh^{\beta}(\P^3)$, we get $x + 1 = \ch_1^{-1}(F) >  0$ and $-x + d + 1 = \ch_1^{-d + 1}(G) > 0$. Overall, we have
\[
x \in [0, d].
\]
By Proposition \ref{prop:rank_two_destabilizes_rank_two} the wall $W(E, F)$ has to be smaller than or equal to $W(E, \OO(\ch_1(F) - d))$. In order to be outside the area in which $Q_{\alpha, \beta}(E) < 0$, we must have either $\ch_1(F) - d > -1$ or $\ch_1(F) - d < -d + 1$. We are left with the possibilities $x = 0$ and $x = d$. 

Assume that $x = 0$. By Proposition \ref{prop:rank_one_old} we know $z \leq \tfrac{y(y+1)}{2}$. We get $y \in \{0, 1\}$ from
\[
\frac{d^2}{4} - 2y = \rho(E, F)^2 \geq \rho_Q(E)^2 = \frac{d^3 + 12e}{4d} > \frac{d^2}{4} - 3.
\]

If $y = 0$, then $F$ is the ideal sheaf of a zero-dimensional subscheme $Z \subset \P^3$. The non-derived restriction of $F$ to $S$ is the ideal sheaf of the scheme-theoretic intersection $Z \cap S$. Since $E$ is slope-stable, this restriction of $F$ cannot have a non-trivial morphism to $E$.

If $y = 1$, then $\ch(F) = (1,0,-1,z)$ and $F$ is an ideal sheaf of the union of a line with potentially embedded points and further points not on the line. By assumption $S$ is very general and does not contain any lines. Hence, the non-derived restriction of $F$ to $S$ is an ideal sheaf of points. Again this restriction cannot have a non-trivial map to $E$.


Assume that $x = d$. We get that either $y = d^2$ or $y = d^2 + 1$ from
\[
\frac{d^2}{4} \geq \rho(E, F)^2 = \frac{9d^2}{4} - 2y \geq \rho_Q(E)^2 = \frac{d^3 + 12e}{4d} \geq \frac{d^2}{4} - 3.
\]

We start with the case $y = d^2$. Then $\ch_{\leq 2}(G) = \ch_{\leq 2}(\OO(-d)[1])$, and therefore, Proposition \ref{prop:line_bundles_uniquely_stable} implies $G \cong \OO(-d)[1]$. By Lemma \ref{lem:sub_or_quot_torsion_onS} the sheaf $F$ contains a torsion sheaf $T$ supported on $S$. A straightforward computation shows $W(E, F) = W(F, \OO)$. In the proof of Lemma \ref{lem:subobject_has_torsion_on_S} we showed that the morphism $T \to F$ destabilizes $F$ above the wall $W(F, T)$ which is smaller than or equal to $W(F, \OO)$. This implies $W(F, T) = W(F, \OO)$ and the quotient $F/T$ has Chern character $\ch_{\leq 2}(F/T) = (1,0,0)$. This means it is an ideal sheaf of points $\II_Z$. By the Snake Lemma the quotient $E/T$ is the quotient of a map $\OO(-d) \to \II_Z$. Since $E$ is supported on $S$, we must have $E/T \cong \II_{(Z \cap S)/S}$. However, this is a contradiction to $E$ being slope-stable on $S$.

Assume that $y = d^2 + 1$. Then $\ch_{\leq 2}(G(d)) = (-1, 0, 1)$. Lemma \ref{lem:dual_ideal_sheaf_line} implies that there is a surjective map from $G$ onto a line bundle supported on a line. In particular, $E$ has such a map. Since $E$ is supported on $S$ this implies that $S$ contains a line, a contradiction. \qedhere
\end{enumerate}
\end{proof}

\begin{lem}
\label{lem:special_case_d5}
Assume $E \in \Coh(S)$ is slope-stable with $\ch_S(E) = (2,0,e)$, and $d = 5$. If $e > -5$, then $E$ is destabilized by a short exact sequence $0 \to F \to E \to G \to 0$ in tilt stability with $\ch_0(F) = 2$ and $\ch_1(F) = 0$.
\end{lem}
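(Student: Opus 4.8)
The plan is to pin down the destabilizing wall of $i_{*}E$ in $\P^3$ precisely enough to read off $\ch_{1}(F)$. By Lemma \ref{lem:ch_on_P3}(ii) we have $\ch_{\P^3}(E)=(0,10,-25,\tfrac{125}{3}+e)$, so $\Delta(i_{*}E)=100$, and since $e>-5=-d$ the hypothesis of Lemma \ref{lem:rank_sub_torsion_sheaf}(ii) holds and $i_{*}E$ is destabilized in tilt stability along a semicircular wall. Taking $W$ to be the largest actual wall, the discussion following Theorem \ref{thm:p3_conjecture} (with $r=3$, valid because $\rho_{Q}^{2}>\tfrac{d^{2}}{9}=\tfrac{\Delta(i_{*}E)}{36}$) lets us choose the destabilizing short exact sequence $0\to F\to i_{*}E\to G\to 0$ with $F$ a $\nu_{\alpha,\beta}$-semistable subobject and $\ch_{0}(F)\in\{1,2\}$. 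Taking cohomology shows $\HH^{-1}(F)=0$, so $F$ is a sheaf and $\ch_{0}(F)\geq 0$; Lemma \ref{lem:torsion_destabilized_by_rank_one}(ii) excludes $\ch_{0}(F)=1$, whence $\ch_{0}(F)=2$. It remains to prove $\ch_{1}(F)=0$; note also $e=-c_{2}(E)\in\Z$.

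Every numerical wall for $i_{*}E$ is a semicircle with center on the $\beta$-axis at $\beta=-\tfrac{d}{2}$ (Theorem \ref{thm:Bertram}(i)), and a direct computation of $W=W(i_{*}E,F)$ from $\ch_{\leq 2}(i_{*}E)=(0,2d,-d^{2})$ and $\ch_{\leq 2}(F)$ gives
\[
\rho_{W}^{2}=\frac{d^{2}+2d\,\ch_{1}(F)+4\ch_{2}(F)}{4}.
\]
Since $F$ and $G$ remain, respectively, a subobject and a quotient of $i_{*}E$ in $\Coh^{\beta}(\P^3)$ along $W$, we have $0\leq\ch_{1}^{\beta}(F)\leq\ch_{1}^{\beta}(i_{*}E)=2d$ there; writing this out at the two endpoints of $W$ on the $\beta$-axis, exactly as in the proof of Corollary \ref{cor:rank_two_destabilizes_rank_two}, confines the $\beta$-range of $W$ and yields $\rho_{W}\leq\tfrac{d-|\ch_{1}(F)|}{2}$. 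On the other hand $i_{*}E$ is $\nu_{\alpha,\beta}$-semistable for $\alpha\gg 0$ (Proposition \ref{prop:large_volume_limit}), while by Theorem \ref{thm:p3_conjecture} it is unstable wherever $Q_{\alpha,\beta}(i_{*}E)<0$, i.e. strictly inside the concentric semicircle $W_{Q}$; as the walls are concentric and $W$ is the largest, $\rho_{W}\geq\rho_{Q}$ with $\rho_{Q}^{2}=\tfrac{d^{3}+12e}{4d}$. For $d=5$ and $e>-5$ this says $\rho_{Q}^{2}>\tfrac{13}{4}$, so $\tfrac{(5-|\ch_{1}(F)|)^{2}}{4}\geq\rho_{W}^{2}\geq\rho_{Q}^{2}>\tfrac{13}{4}$, which forces $|\ch_{1}(F)|\leq 1$.

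The case $|\ch_{1}(F)|=1$ is where the real work lies. Here $\rho_{W}\leq 2$, hence $\rho_{Q}\leq 2$, and with $e\in\Z$, $e>-5$ this pins $e=-4$ and $\rho_{Q}^{2}=\tfrac{77}{20}$. Substituting $\ch_{1}(F)=\pm 1$ into the radius formula and imposing $\tfrac{77}{20}\leq\rho_{W}^{2}\leq 4$ confines $\ch_{2}(F)$ to $[-\tfrac{49}{10},-\tfrac{19}{4}]$ if $\ch_{1}(F)=1$ and to $[\tfrac{1}{10},\tfrac{1}{4}]$ if $\ch_{1}(F)=-1$. But $F$ is a coherent sheaf with odd $\ch_{1}(F)$, so $\ch_{2}(F)\in\tfrac{1}{2}+\Z$, and neither interval contains a half-integer -- a contradiction. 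Therefore $\ch_{1}(F)=0$, and $0\to F\to i_{*}E\to G\to 0$ is the sequence asserted in the lemma.

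The main obstacle is precisely this last case. The value $\ch_{1}(F)=\pm 1$ survives all of the coarse radius estimates available (Lemma \ref{lem:higherRankBound}, Corollary \ref{cor:rank_two_destabilizes_rank_two}); it is eliminated only by playing the \emph{exact} radius of $W$ against the half-integrality of $\ch_{2}(F)$, which forces one to keep the chain $\rho_{Q}\leq\rho_{W}\leq\tfrac{d-|\ch_{1}(F)|}{2}$ simultaneously tight at the borderline degree $d=5$, and this is the delicate bookkeeping in the argument.
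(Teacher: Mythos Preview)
Your proof is correct and follows essentially the same strategy as the paper: after reducing to $\ch_0(F)=2$ via Lemmas \ref{lem:rank_sub_torsion_sheaf} and \ref{lem:torsion_destabilized_by_rank_one}, both arguments squeeze $\ch_2(F)$ between the lower bound coming from $\rho_W\geq\rho_Q$ and an upper bound (the paper invokes $\Delta(F)\geq 0$ and $\Delta(G)\geq 0$, which are numerically identical to your endpoint bound $\rho_W\leq\tfrac{5-|\ch_1(F)|}{2}$), and find no admissible half-integer when $\ch_1(F)=\pm 1$. Your detour through pinning $e=-4$ via integrality is unnecessary---the strict inequality $\rho_Q^2>\tfrac{13}{4}$ already rules out $\ch_2(F)=-\tfrac{1}{2}$ and $\ch_2(F)=-\tfrac{11}{2}$ directly---but it does no harm.
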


\begin{proof}
Assume that $\ch_S(E) = (2,0,e)$, but $e > -5$. The Chern character of $E$ in $\P^3$ is given by $\ch(E) = (0, 10, -25, \tfrac{125}{3} + e)$. By Lemma \ref{lem:rank_sub_torsion_sheaf} and Lemma \ref{lem:torsion_destabilized_by_rank_one} we know that $E$ is destabilized by a rank two subobject $F \into E$ in tilt stability in $\P^3$. Let $G$ be the quotient $E/F$, and let $\ch(F) = (2,x,y,z)$. A straightforward computation shows
\[
Q_{0,-1}(E) = Q_{0,-4}(E) = -60e - 400 < 0.
\]
Therefore, $0 < \ch^{-1}_1(F) = x + 2$ implies $x > -2$. Moreover, $x + 8 = \ch^{-4}_1(F) < \ch^{-4}_1(E) = 10$ implies $x < 2$. Overall, $x \in \{-1, 0, 1\}$.
\begin{enumerate}
\item Assume $x = -1$. Then $\frac{13}{4} \leq \rho^2_Q \leq \rho^2(E, F) = y + \tfrac{15}{4}$ implies $y \geq -\tfrac{1}{2}$. Since $\Delta(F) \geq 0$, we must have $y = -\tfrac{1}{2}$, and moreover, $Q_{\alpha, \beta}(E) \geq 0$ is equivalent to $e \leq -5$ for any $(\alpha, \beta)$ along the wall. 
\item Assume $x = 1$. Then $\frac{13}{4} \leq \rho^2_Q \leq \rho^2(E, F) = y + \tfrac{35}{4}$ implies $y \geq -\tfrac{11}{2}$. Together with $\Delta(G) \geq 0$, we get $y = -\tfrac{11}{2}$, and moreover, $Q_{\alpha, \beta}(E) \geq 0$ is equivalent to $e \leq -5$ for any $(\alpha, \beta)$ along the wall. \qedhere
\end{enumerate}
\end{proof}

\begin{proof}[Proof of Theorem \ref{thm:rank_two_bounds_surface}]
\begin{enumerate}
\item Let $\ch_S(E) = (2, -H, e)$, but $e \geq 1 - \tfrac{d}{2}$. By Lemma \ref{lem:rank_sub_torsion_sheaf} and Lemma \ref{lem:torsion_destabilized_by_rank_one} we know that $E$ is destabilized by a rank two subobject $F \into E$ in tilt stability in $\P^3$. Let $G = E/F$, and let $\ch(F) = (2,x,y,z)$. As in the proof of Lemma \ref{lem:torsion_destabilized_by_rank_one} we can compute $Q_{0,-1}(E) = Q_{0,-d}(E) < 0$. Therefore, any wall contains two points $(\alpha_0, -1)$ and $(\alpha_1, -d)$. By construction of $\Coh^{\beta}(\P^3)$ this implies $x + 2 = \ch_1^{-1}(F) > 0$ and $-x = \ch_1^{-d}(G) > 0$. Overall, we have $x = -1$. Next we can bound $y \in \{-\tfrac{3}{2}, -\tfrac{1}{2} \}$ by using
\[
\frac{d^2 - 9d + 12}{4d} \leq \frac{d^2 - 3d + 12e}{4d} = \rho^2_Q \leq \rho^2(E, F) = \frac{d^2}{4} + y - \frac{1}{4}. 
\]
Assume that $y = -\tfrac{1}{2}$. Then Proposition \ref{prop:rank_two_bounds} says $z \leq \tfrac{5}{6}$. A straightforward computation shows $\ch(G(d+1)) = (-2, -1, \tfrac{1}{2}, \tfrac{d}{2} + e - z + \tfrac{2}{3})$. Corollary \ref{cor:rank_minustwo_bounds} implies 
\[
e \leq \frac{1}{6} - \frac{d}{2} + z \leq 1 - \frac{d}{2}.
\]
Moreover, in case $e = 1 - \tfrac{d}{2}$, we get $z = \tfrac{5}{6}$, and Proposition \ref{prop:rank_two_bounds} implies that $F(H)$ has three linearly independent global sections. Thus, $E(H)$ also has these three global sections.

Assume that $y = -\tfrac{3}{2}$. Then Proposition \ref{prop:rank_two_bounds} says $z \leq \tfrac{17}{6}$. A straightforward computation shows $\ch(G(d+1)) = (-2, -1, \tfrac{3}{2}, \tfrac{3d}{2} + e - z + \tfrac{5}{3})$. Corollary \ref{cor:rank_minustwo_bounds} implies 
\[
e \leq \frac{7}{6} - \frac{3d}{2} + z \leq 4 - \frac{3d}{2} < 1 - \frac{d}{2}.
\]
\item Assume that $e > -d$. Since $\chi(E)$ is an integer, we have $e \geq -d + 1$. By Lemma \ref{lem:rank_sub_torsion_sheaf} and Lemma \ref{lem:torsion_destabilized_by_rank_one} we know that $E$ is destabilized by a rank two subobject $F \into E$ in tilt stability in $\P^3$. Let $G$ be the quotient $E/F$, and let $\ch(F) = (2,x,y,z)$. If $d = 5$, then Lemma \ref{lem:special_case_d5} says $x = 0$. If $d \geq 6$, a straightforward computation shows
\[
Q_{0,-\tfrac{1}{2}}(E) = Q_{0,-d+\tfrac{1}{2}}(E) = -2d^3 + d^2 - 12de \leq -2d^3 + 13d^2 - 12d < 0.
\]
Therefore, any wall contains two points $(\alpha_0, -1/2)$ and $(\alpha_1, -d + 1/2)$. By construction of $\Coh^{\beta}(\P^3)$ this implies $x + 1 = \ch_1^{-1/2}(F) > 0$ and $1 - x = \ch_1^{-d + 1/2}(G) > 0$. Overall, we get $x = 0$ regardless of $d$. We get $y \in \{0, -1, -2\}$ from
\[
\frac{d^3 - 12d + 12}{4d} \leq \frac{d^3 + 12e}{4d} = \rho^2_Q \leq \rho^2(E, F) = \frac{d^2}{4} + y.
\]

Assume that $y = 0$. Then $F$ is a slope-semistable sheaf with Chern character $(2,0,0,z)$. By \cite[Theorem 3.1]{BMSZ17:stability_fano} $F$ has to be strictly semistable. Any stable subobject $F$ with the same slope has $\Delta(F) = 0$. In particular, $F$ has a subobject that is an ideal sheaf of points or the structure sheaf. Such a map contradicts the slope-stability of $F$ on $S$.

If $y = -1$, then Proposition \ref{prop:rank_two_bounds} says $z \leq 0$. Since $\ch(G(d)) = (-2, 0, 1, d + e - z)$, we can use Corollary \ref{cor:rank_minustwo_bounds} to get $e \leq -d$. If $y = -2$, then Proposition \ref{prop:rank_two_bounds} says $z \leq 2$. Since $\ch(G(d)) = (-2, 0, 1, 2d + e - z)$, we can use Corollary \ref{cor:rank_minustwo_bounds} to get $e \leq -2d + 2 < -d$. \qedhere
\end{enumerate}
\end{proof}

\begin{cor}
\label{cor:moduli_on_surface}
Let $S \subset \P^3$ be a very general surface of degree $d \geq 5$ over an algebraically closed field of characteristic zero, and let $H$ be the hyperplane section on $S$. The moduli space of semistable rank two sheaves on $S$ with $\ch_1(E) = -H$ and $\Delta(E) = 3d^2 - 4d$ is given by $S$.
\end{cor}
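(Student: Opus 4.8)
The plan is to attach to each stable sheaf $E$ parametrized by the moduli space $M$ a canonical point $p(E)\in S$, to show that $E$ is reconstructed from $p(E)$ by a Serre-type construction, and to produce the inverse map from a flat family over $S$; then $M\cong S$ drops out formally. Two reductions come first. Since $\Pic(S)=\Z\cdot H$, no rank-one torsion-free sheaf has $H$-slope $-\tfrac12$, so every $H$-semistable $E$ with $\ch_0(E)=2$ and $\ch_1(E)=-H$ is in fact $H$-stable, and $M$ is a moduli space of stable sheaves. Moreover $H$-stability is preserved by the reflexive hull on the smooth surface $S$: a subsheaf $A\subset E^{\vee\vee}$ violating stability would force $A\cap E\subset E$ to do so, since $A/(A\cap E)\hookrightarrow E^{\vee\vee}/E$ is zero-dimensional and hence $\mu_H(A\cap E)=\mu_H(A)$. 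Thus $E^{\vee\vee}$ is again $H$-stable with $\ch_1=-H$, so Theorem \ref{thm:rank_two_bounds_surface}(i) gives $\Delta(E^{\vee\vee})\geq 3d^2-4d=\Delta(E)$; as $\Delta(E^{\vee\vee})=\Delta(E)-4d\cdot\mathrm{length}(E^{\vee\vee}/E)$ we get $E=E^{\vee\vee}$, i.e. every sheaf in $M$ is a stable vector bundle. The same theorem yields $h^0(E(H))\geq 3$; conversely a global section of $E(H)$ cannot vanish along a divisor $\delta H$ with $\delta\geq 1$ (stability kills $H^0(E((1-\delta)H))$), so it presents $E$ as an extension $0\to\OO_S(-H)\to E\to\II_Z\to 0$ with $\mathrm{length}(Z)=d-1$, and from $0\to\OO_S\to E(H)\to\II_Z(H)\to 0$ one reads $h^0(\II_{Z/S}(H))\geq h^0(E(H))-1\geq 2$; this forces $Z$ onto a line of $\P^3$ (not lying on $S$, which contains no line), whence $h^0(\II_{Z/S}(H))=2$ and $h^0(E(H))=3$.

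The decisive step is to analyze the evaluation map $\mathrm{ev}\colon H^0(E(H))\otimes\OO_S\cong\OO_S^{3}\to E(H)$. It is generically surjective, because three independent sections cannot lie in a rank-one subsheaf of the stable bundle $E(H)$; hence $K:=\ker(\mathrm{ev})$ is a saturated rank-one subsheaf of $\OO_S^{3}$, so a line bundle $\OO_S(kH)$ with $k\leq 0$ (it embeds in $\OO_S^{3}$) and $k\neq 0$ (else the three basis sections would satisfy a linear relation), hence $k\leq -1$. Chern-character bookkeeping for $0\to\OO_S(kH)\to\OO_S^{3}\to E(H)\to Q\to 0$ gives $\ch_1(Q)=(k+1)H$; since the one-dimensional part of $\supp(Q)$ is effective, $k\geq -1$, so $k=-1$, $Q$ is zero-dimensional and $\mathrm{length}(Q)=1$. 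We thus obtain
\[
0\to\OO_S(-H)\xrightarrow{\ (\ell_1,\ell_2,\ell_3)\ }\OO_S^{3}\to E(H)\to\OO_p\to 0,
\]
where $p=p(E)\in S$ is the unique point at which $E(H)$ fails to be globally generated, hence is intrinsic to $E$. Writing $\mathcal{F}_p:=\cok(\OO_S(-H)\to\OO_S^{3})$, one has $\mathcal{F}_p^{\vee\vee}=E(H)^{\vee\vee}=E(H)$; in particular $\mathcal{F}_p^{\vee\vee}/\mathcal{F}_p\cong\OO_p$ has length one, which forces the common zero of $\ell_1,\ell_2,\ell_3$ to be exactly the single point $p\in S$, so $\ell_1,\ell_2,\ell_3$ span the linear forms vanishing at $p$ and $\mathcal{F}_p$ depends only on $p$. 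Since $\Ext^1(\OO_p,\mathcal{F}_p)$ is one-dimensional and $E(H)$ is a non-split extension of $\OO_p$ by $\mathcal{F}_p$, it follows that $E(H)\cong\mathcal{F}_p^{\vee\vee}$, so $E\cong E_p:=\mathcal{F}_p^{\vee\vee}(-H)$ and $[E]\mapsto p(E)$ is injective on points.

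For the converse, a standard construction over $S$ — the universal net of planes through a point, followed by relative reflexive hull and a twist by $-H$ — yields a flat family $\mathcal{E}$ over $S$ with $\mathcal{E}_p\cong E_p$. Each $E_p$ is $H$-stable: a destabilizing $\OO_S(mH)\hookrightarrow E_p$ with $m\geq 0$ would produce inside $\mathcal{F}_p$ a copy of $\OO_S((m{+}1)H)$ or of $\mathfrak{m}_p((m{+}1)H)$, ruled out by $H^0(\mathcal{F}_p(-jH))=0$ and $\Hom(\mathfrak{m}_p,\mathcal{F}_p(-jH))=0$ for $j\geq 1$, both consequences of the defining sequence of $\mathcal{F}_p$ and the vanishings $H^0(\OO_S(-jH))=H^1(\OO_S(-jH))=0$ for $j\geq 1$. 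Hence $\mathcal{E}$ induces a morphism $\psi\colon S\to M$; relativizing the non-globally-generated point gives a morphism $\phi\colon M\to S$ with $\phi\circ\psi=\id_S$ (because $\mathcal{F}_p^{\vee\vee}$ fails to be globally generated exactly at $p$), and by the previous paragraph $\psi$ is bijective on points. Being a section of the separated morphism $\phi$, $\psi$ is a closed immersion; a surjective closed immersion identifies $S$ with $M_{\mathrm{red}}$, and since this moduli space is reduced — for $d=5$ by \cite{MS11:rank_two_quinticI, MS18:rank_two_quinticII}, and in general by a direct deformation-theoretic check — we conclude $\psi\colon S\xrightarrow{\ \sim\ }M$.

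I expect the main obstacle to be the structural analysis of the second paragraph above: converting the bare numerical equality $\Delta(E)=3d^2-4d$ into the exact sequence $0\to\OO_S(-H)\to\OO_S^{3}\to E(H)\to\OO_p\to 0$ and the identification $E(H)=\mathcal{F}_p^{\vee\vee}$. This rests on three facts each needing proof — that $E$ is locally free (via the reflexive-hull reduction to Theorem \ref{thm:rank_two_bounds_surface}), that $h^0(E(H))$ equals exactly $3$, and that the evaluation map degenerates at a single point with cokernel of length one — while the remaining delicate points are the flatness of the family $\mathcal{E}$ and the $H$-stability of $E_p$ for every $p\in S$.
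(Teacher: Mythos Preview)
Your approach is sound and genuinely different from the paper's. Where you analyze the evaluation map $H^0(E(H))\otimes\OO_S\to E(H)$ by classical means and arrive at the four-term sequence
\[
0 \longrightarrow \OO_S(-2H) \longrightarrow \OO_S(-H)^{\oplus 3} \longrightarrow E \longrightarrow \OO_p \longrightarrow 0,
\]
the paper instead runs wall-crossing in tilt stability on $S$: it shows that $E$ is destabilized along $W(E,\OO_S(-H))$ by a short exact sequence $0\to\OO_S(-H)^{\oplus 3}\to E\to G\to 0$ in $\Coh^{\beta}(S)$, and then invokes Lemma~\ref{lem:dual_ideal_sheaf_point} to identify $G$ as an extension of $\OO_p$ by $\OO_S(-2H)[1]$. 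The two descriptions are equivalent --- your four-term sequence in $\Coh(S)$ is exactly the long exact cohomology sequence of the paper's triangle --- but your derivation bypasses tilt stability on $S$ entirely and replaces the higher-rank bound of Lemma~\ref{lem:higherRankBound} with an elementary effectivity argument on $\ch_1(Q)$. Your endgame, realizing $\psi$ as a section of a separated morphism and hence a closed immersion, is also tidier than the paper's appeal to a bijective morphism in characteristic zero.

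That said, two deferred points carry real weight. First, the ``direct deformation-theoretic check'' that $M$ is reduced is not a formality: the paper spends the last third of its proof computing $\Ext^1(E,E)=\C^2$ by successively applying $\Hom(\OO_S(-H),-)$, $\Hom(G,-)$, and $\Hom(-,E)$ to the structural sequence, and this rests on several auxiliary vanishings such as $\Ext^1(\OO_S(-H),E)=0$ and $\Ext^2(G,\OO_S(-H))=0$. You should carry this out explicitly; your presentation of $E$ makes it no harder, and without it the identification $M\cong S$ is incomplete. Second, building the flat family $\mathcal{E}$ via a ``relative reflexive hull'' is delicate, since double duals need not commute with base change. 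A safer route: the cokernel family $\mathcal{F}$ is already $S$-flat (constant Hilbert polynomial over a reduced base), and since $\ext^1(\OO_p,\mathcal{F}_p)=1$ for every $p$, base change forces the relative $\Ext^1$ of $\OO_{\Delta}$ by $\mathcal{F}$ to be a line bundle on $S$, so you can form the universal extension directly and avoid the reflexive hull altogether.
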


\begin{proof}[Proof of Corollary \ref{cor:moduli_on_surface}]
We have $\ch(E) = (2,-H, 1  - \tfrac{d}{2})$. The first step is to analyze the tilt stability of $E$ on the surface $S$. By Theorem \ref{thm:rank_two_bounds_surface} $\hom(\OO_S(-H), E) \geq 3$. Therefore, $E$ is unstable below the numerical wall $W(E, \OO_S(-H))$. Since $\ch_1^{-1}(E) = H$, we know that $E$ is not destabilized along $\beta = -1$. This means, $E$ is destabilized along the wall $W = W(E, \OO_S(-H))$. We can compute
\[
\frac{\Delta_S(E)}{32d^2} = \frac{3}{32} - \frac{1}{8d} < \frac{1}{4} - \frac{1}{d} + \frac{1}{d^2} = \rho(E, \OO_S(-H))^2.
\]
By Lemma \ref{lem:higherRankBound} $E$ cannot be destabilized by a subobject of rank four or higher. Since $\OO_S(-H)$ is tilt-stable, any subobject of $\OO_S^{\oplus 3}(-H)$ in $\Coh^{\beta}(\P^3)$ with the same tilt-slope has to be $\OO_S^{\oplus k}(-H)$ for $1 \leq k \leq 3$. Three linearly independent morphisms $\OO_S(-H) \to E$, induce the short exact sequence
\begin{equation}
\label{eq:sequence_stable_objects}
0 \to \OO_S^{\oplus 3}(-H) \to E \to G \to 0,
\end{equation}
where $G$ is also tilt-semistable along the wall. Moreover, $\hom(\OO_S(-H), E) = 3$. Note that $\ch(G(2H)) = (-1,0,1)$. By Lemma \ref{lem:dual_ideal_sheaf_point} there is a point $P \in S$ and a short exact sequence
\[
0 \to \OO_S(-2H)[1] \to G \to \OO_P \to 0.
\]
Clearly, the moduli space of such objects $G$ is isomorphic to $S$ itself. From this sequence we can see that the derived dual of $G$ is $\II_P(2H)[-1]$. This means
\[
\Ext^1(G, \OO_S(-H)) = \Hom(G, \OO_S(-H)[1]) = H^0(\II_P(H)) = \C^3.
\]
In particular, $E$ being semistable means that $E$ is uniquely determined by $G$. Overall, we get a bijective morphism $M(2, -H, 1 - \tfrac{d}{2}) \to M(-1,0,1) \cong S$. Since our ground field has characteristic $0$, all we have to show is that $M(2, -H, 1 - \tfrac{d}{2})$ is smooth. We can compute
\[
\Ext^1(\OO_S(-H), G) = \Ext^1(\II_P(H)[-1], \OO_S) = H^0(\II_P((d-3)H))^{\vee}.
\]
Applying the functor $\Hom(\OO_S(-H), \cdot)$ to (\ref{eq:sequence_stable_objects}) shows $\Hom(\OO_S(-H), E) = \C^3$, and $\Ext^1(\OO_S(-H), E)$ is the kernel of the morphism
\[
H^0(\II_P((d-3)H))^{\vee} \to H^0(\OO_S((d-4)H)^{\oplus 3})^{\vee}.
\]
However, this morphism is injective since its dual is surjective, and we get $\Ext^1(\OO_S(-H), E) = 0$. We can compute $\Ext^1(G, \OO_S(-H)) = H^0(\II_P(H)) = \C^3$ and $\Ext^2(G, \OO_S(-H)) = 0$. Therefore, applying $\Hom(G, \cdot)$ to (\ref{eq:sequence_stable_objects}) shows $\Ext^1(G, E) = \C^{10}$. Finally, we apply $\Hom(\cdot, E)$ to (\ref{eq:sequence_stable_objects}) and obtain $\Ext^1(E, E) = \C^2$. Indeed, $M(2, -H, 1 - \tfrac{d}{2})$ is smooth.
\end{proof}

\def\cprime{$'$} \def\cprime{$'$}

\end{document}